\newtheorem{theorem}{Theorem}[section]
\newtheorem{lemma}[theorem]{Lemma}
\newtheorem{props}[theorem]{Proposition}
\newtheorem{corol}[theorem]{Corollary}
\theoremstyle{definition}
\newtheorem{definition}[theorem]{Definition}
\newtheorem{example}[theorem]{Example}
\newtheorem{question}[theorem]{Question}
\newtheorem*{questionPalais*}{Palais' problem for operator spaces}
\theoremstyle{remark}
\numberwithin{equation}{section}
\newcommand{\vertiii}[1]{{\left\vert\kern-0.25ex\left\vert\kern-0.25ex\left\vert #1 
    \right\vert\kern-0.25ex\right\vert\kern-0.25ex\right\vert}}
\def\block(#1,#2)#3{\multicolumn{#2}{c}{\multirow{#1}{*}{$ #3 $}}}
\DeclareMathOperator{\Ima}{Im}
\begin{document}

\title{Twisting Operator Spaces}

\author{Willian Hans Goes Corr\^ea}
\address{Departamento de Matem\'atica, Instituto de Matem\'atica e Estat\'istica, Universidade de S\~ao Paulo, Rua do Mat\~ao 1010, 05508-090 S\~ao Paulo SP, Brazil}
\email{willhans@ime.usp.br}
\thanks{The present work was partially supported by CAPES, Coordena\c{c}\~{a}o de Aperfei\c{c}oamento de Pessoal de N\'{i}vel Superior, grant 1328372, and by CNPq, Conselho Nacional de Desenvolvimento Cient\'{i}fico  e Tecnol\'{o}gico - Brazil, grant 140413/2016-2}


\subjclass[2010]{Primary 47L25, 46M18}

\date{}

\dedicatory{}

\begin{abstract}
In this work we study the following three space problem for operator spaces: if $X$ is an operator space with base space isomorphic to a Hilbert space and $X$ contains a completely isomorphic copy of the operator Hilbert space $OH$ with respective quotient also completely isomorphic to $OH$, must $X$ be completely isomorphic to $OH$? This problem leads us to the study of short exact sequences of operator spaces, more specifically those induced by complex interpolation, and their splitting. We show that the answer to the three space problem is negative, giving two different solutions.
\end{abstract}

\maketitle

\tableofcontents

\section{Introduction}
\subsection{Overview}
In this work we are interested in solving a version of Palais' problem for operator spaces. Palais' problem asks if being isomorphic to a Hilbert space is a 3-space property, that is, if $X$ is a Banach space with a subspace $Y$ such that $Y$ and $X/Y$ are isomorphically Hilbert spaces, must $X$ be itself isomorphic to a Hilbert space? 

Palais' problem was answered in the negative by Enflo, Lindenstrauss and Pisier \cite{Enflo01}, who gave the first example of a Banach space that contains a copy of $\ell_2$ with $\ell_2$ as quotient, and which is not isomorphic to a Hilbert space. Some years later, Kalton and Peck \cite{Kalton01} solved this problem in a different way, and showed the equivalence between twisted sums of Banach spaces $Y$ and $Z$ and a class of nonlinear maps from $Z$ to $Y$, called quasilinear.

Recall that a twisted sum of Banach spaces is a short exact sequence
  \[
  \xymatrix{ 0 \ar[r] & Y \ar[r] & X
    \ar[r] & Z\ar[r]  &0}
  \]
  
\noindent where $Y$ and $Z$ are Banach spaces, $X$ is a quasi-Banach space, and the arrows are bounded linear maps. We refer either to the exact sequence or to $X$ as a twisted sum of $Y$ and $Z$ (in that order). If $Y = Z$, $X$ is also called an extension of $Y$. General conditions guarantee that $X$ is actually isomorphic to a Banach space, for example, if $Y$ and $Z$ are $B-$convex.

Kalton-Peck's space $Z_2$ appears as a twisted sum
\begin{equation}\label{eq:1.1}
  \xymatrix{ 0 \ar[r] & \ell_2 \ar[r] & Z_2
    \ar[r] & \ell_2\ar[r]  &0}
\end{equation}
  
\noindent defined by the quasilinear map $\mathcal{K} : \ell_2 \rightarrow \ell_2$, given on finitely supported vectors by
\[
\mathcal{K}_2\Big(\sum\limits_i x_i e_i\Big) = \sum\limits_i x_i \Big(\log\frac{\left|x_i\right|}{\|x\|_2}\Big)e_i
\]

That is, $Z_2$ is the completion of $c_{00} \times c_{00}$ with the quasinorm
\[
\|(x, y)\| = \|x - \mathcal{K}_2y\|_2 + \|y\|_2
\]

One of the interesting properties of the twisted sum \eqref{eq:1.1} is that it is singular and cosingular, i.\ e., the quotient map is strictly singular, and the inclusion is strictly cosingular. According to Kalton and Peck \cite{Kalton01}, this means that $Z_2$ is in some sense an extremal solution to Palais' problem.

In \cite{Rochberg01}, Rochberg and Weiss define the notion of `derived space' of a complex interpolation scale. This derived space happens to be an extension of the interpolation space, and the Kalton-Peck space also appears as the derived space induced by the interpolation scale $(\ell_{\infty}, \ell_1)_{\theta}$ at $\theta = \frac{1}{2}$.

The objective of this paper is to study the corresponding 3-space problem for Hilbert spaces in the category of operator spaces. Operator spaces, or quantum Banach spaces, are the noncommutative version of Banach spaces. 

One may view an operator space either as a closed subspace of the space of operators on a Hilbert space, or as a Banach space $X$ together with a sequence of norms on the spaces $M_n(X)$, $n \in \mathbb{N}$, satisfying certain axioms. Instead of operators one considers completely operators, and isomorphisms are replaced by complete isomorphisms.

Short exact sequences of operator spaces are not a new topic in the literature. They appear, for example, in \cite{Paulsen1998} and \cite{Wood1999}, but one of the main hypothesis in these works is that the short exact sequences always split. This hypothesis is motived by the interest in the module structure of the spaces. Short exact sequences of operator spaces also appear in the study of exactness and nuclearity of operator spaces in terms of tensor products (see \cite{Effros01}, chapter 14).

Of course, short exact sequences are always present when we speak of subspaces or quotients, implicitly or explicitly.

In the present work we seek to develop a theory similar to that of the theory of twisted sums of Banach spaces. Because of that, the nomenclature we use is in parallel to the Banach space scenario, despite the fact that some of the concepts are already present in the literature. We shall refer to the original nomenclature whenever we are aware of it.

In particular, our main interests are obtaining methods to find twisted sums of operator spaces and study their nontriviality. In this context, we are led to define and study completely singular and completely cosingular operators. 

Based on the Banach space case, we define completely strictly singular operators as those which are never a complete isomorphism when restricted to an infinite dimensional closed subspace of the domain, and completely strictly cosingular operators as those which when composed with a complete quotient map over an infinite dimensional operator space are never a complete quotient map.

There is an operator space which plays the role of $\ell_2$ in the category of operator spaces, the operator Hilbert space $OH$ \cite{Pisier03}, which as a Banach space is isometric to $\ell_2$. We study the following

\begin{questionPalais*} Is being completely isomorphic to $OH$ a 3-space property in the category of operator spaces which are isomorphic (isometric) to $\ell_2$, i.e., if an operator space $X$, isomorphic (isometric) as a Banach space to $\ell_2$, has a subspace completely isomorphic to $OH$ with respective quotient also completely isomorphic to $OH$, is $X$ completely isomorphic to $OH$?
\end{questionPalais*}

We show that both the isomorphic and the isometric versions of Palais' problem for operator spaces have a negative solution. We thank Gilles Pisier for pointing out how our argument for the isomorphic version could be changed to obtain an operator space isometric to $\ell_2$.

As mentioned, the Kalton-Peck space $Z_2$ may be seen as induced by complex interpolation. The operator Hilbert space $OH$ may be naturally obtained by complex interpolation of operator spaces. We study two extensions of $OH$, induced by the interpolation scales $(\min (\ell_2), \max(\ell_2))$ and $(R, C)$. We show that they induce different solutions to Palais' problem for operator spaces, which have completely strictly cosingular inclusion and completely strictly singular quotient map.

The structure of the paper is as follows: in the remainder of the introduction we give some background on operator spaces and completely bounded maps, and recall the definitions of some operator spaces with which we shall work.

In \hyperref[sec:2]{Section 2} we recall the concept of extension sequence \cite{Wood1999}, and define a complete twisted sum as the middle space of an extension sequence. Inspired by the theory of twisted sums of Banach spaces, we define different types of equivalence of extension sequences. We also define complete triviality, and completely strictly singular and completely strictly cosingular operators. We show some basic properties that follow from the definitions. We also show how the pullback and the pushout diagrams may be used to obtain extension sequences.

In \hyperref[sec:3]{Section 3} we show that, as in the Banach space case, complex interpolation induces complete twisted sums by means of the pushout diagram. We also show the duality theorem between the dual of a complete twisted sum induced by a interpolation couple and the one induced by the couple of duals. We also show some basic examples: we show that the Kalton-Peck spaces have a natural operator space structure, and how they may be used to obtain complete twisted sums of $\min(\ell_p)$ and $\max(\ell_p)$.

In \hyperref[sec:4]{Section 4} we tackle Palais' problem for operator spaces. We show that the complex interpolation scale $o(\ell_p)(\theta) = (\min(\ell_p), \max(\ell_p))_{\theta}$ induce, for $1 < p < \infty$ and $0 < \theta < 1$, extension sequences 
\begin{equation*}
\xymatrix{
0 \ar[r] & o(\ell_p)(\theta) \ar[r] & do(\ell_p)(\theta) \ar[r] & o(\ell_p)(\theta) \ar[r] & 0}
\end{equation*}
\noindent such that $o(\ell_p)(\theta)$ is complemented in $do(\ell_p)(\theta)$, but is not completely complemented. Actually, they satisfy much stronger properties (they are, as we call, completely singular and completely cosingular). In particular, taking $p = 2$ and $\theta = \frac{1}{2}$, we have an extension sequence
\begin{equation*}
\xymatrix{
0 \ar[r] & OH \ar[r] & do(\ell_2) \ar[r] & OH \ar[r] & 0}
\end{equation*}
\noindent which solves in the negative the isomorphic version of Palais' problem for operator spaces. We also show how this construction may be adapted to solve the isometric version of Palais's problem (we thank Gilles Pisier for calling our attention to this).

In sections \hyperref[sec:5]{5} and \hyperref[sec:6]{6} we analyze another solution to Palais' problem for operator spaces, this time induced by the interpolation scale $(R, C)$, obtaining an extension sequence
\[
\xymatrix{0 \ar[r] & OH \ar[r] & dOH \ar[r] & OH \ar[r] & 0}
\]

We show that this is indeed a different solution from $do(\ell_2)$, and that $do(\ell_2)$ is an operator algebra, while $dOH$ is not.

Finally, in \hyperref[sec:7]{Section 7} we state some questions that arose during this work.

\subsection{Operator Spaces}
We recall the basics of the theory of operator spaces. The reader is referred to \cite{Effros01} and \cite{Pisier04}.

We assume all spaces are over the complex field. By $M_{n}$ we mean the space of all $n\times n$-matrices with complex coefficients with the operator norm from the identification $M_{n} = B(\ell_2^n, \ell_2^n)$.

An \textit{operator space} is a Banach space together with a sequence of norms $(\|.\|_n)$ on the spaces $M_n(X)$ such that:
\begin{enumerate}
\item[O1] $\|v \oplus w\|_{m+n} = \max\{\|v\|_m, \|w\|_n\}$
\item[O2] $\|\alpha v \beta\|_n \leq \|\alpha\|\|v\|_m\|\beta\|$
\end{enumerate}

\noindent for all $v \in M_m(X)$, $w \in M_n(X)$, $\alpha \in M_{m}$, $\beta \in M_{m}$, where
\[ v \oplus w = \left( \begin{array}{ccc}
v & 0 \\
0 & w \end{array} \right)\]

Notice that if $H$ is a Hilbert space, the space of operators $B(H)$ has a natural operator space structure given by the identifications $M_n(B(H)) = B(\ell_2^n(H))$.

Given operator spaces $X$ and $Y$ and a linear map $T : X \rightarrow Y$ we have maps $T_n : M_n(X) \rightarrow M_n(Y)$ induced by $T$ by the formula
\[
T_n((x_{i, j})) = (T(x_{i, j}))
\]

The linear map $T$ is said \textit{completely bounded} if $\|T\|_{cb} = \sup_n \|T_n\| < \infty$. Clearly if $T$ is completely bounded it is bounded, and $\|T\| \leq \|T\|_{cb}$. The completely operator $T$ is a \textit{complete isometry} if each $T_n$ is an isometry, and $T$ is a \textit{complete isomorphism} if $T$ is invertible and $\|T\|_{cb}, \|T^{-1}\|_{cb} < \infty$, that is, each $T_n$ is an isomorphism with uniformly bounded constants.

Ruan's Theorem \cite{Ruan01} states that an operator space structure on $X$ is equivalent to having a completely isometric inclusion $X \subset B(H)$.

Consider $\mathcal{K}_0 = \cup_n M_n$. Then any element of the algebraic tensor product $\mathcal{K}_0 \otimes X$ may be seem as a finite matrix of elements of $X$. The space $\mathcal{K} \otimes_{min} X$ is the completion of $\mathcal{K}_0 \otimes X$ with respect to the norm $\|x\|_{min} = \|x\|_{M_n(X)}$ where $x \in M_n \otimes X \subset \mathcal{K}_0 \otimes X$.

In this context, $T : X \rightarrow Y$ is completely bounded if and only if
\[
Id_{\mathcal{K}_0} \otimes T : \mathcal{K} \otimes_{min} X \rightarrow \mathcal{K} \otimes_{min} Y
\]
\noindent is bounded, and we have $\|T\|_{cb} = \|Id_{\mathcal{K}_0} \otimes T\|$.

We have natural norm 1 operators $\pi_n : \mathcal{K}_0 \otimes X \rightarrow M_n(X)$ given by truncation, and which therefore extend to norm 1 operators from $\mathcal{K} \otimes_{min} X$ to $M_n \otimes X = M_n(X) \subset \mathcal{K} \otimes_{min} X$, that we still denote by $\pi_n$, and one proves that for every $x \in \mathcal{K} \otimes_{min} X$, $\pi_n(x) \rightarrow x$.

We have for every $x = (x_{i, j}) \in M_n(X)$ that
\begin{equation}\label{eq:1.2}
\sup \|x_{i, j}\|_X \leq \|x\|_{M_n(X)} \leq \sum\|x_{i, j}\|_X
\end{equation}

\noindent which shows that all the norms on $M_n(X)$ that come from an operator space structure are actually equivalent, and that convergence in $M_n(X)$ is convergence in each entry.

Given two operator spaces $X_1$ and $X_2$, their $\infty-$direct sum $X_1 \oplus X_2$ is equipped with the operator space structure defined by
\[
\|(x_1, x_2)\| = \max\{\|x_1\|, \|x_2\|\}
\]
\noindent for $(x_1, x_2) \in M_n(X_1 \oplus X_2) = M_n(X_1) \oplus M_n(X_2)$.

Also, if $X$ is an operator space and $Y$ is a closed subspace of $X$, then $Y$ is naturally equipped with an operator space structure, simply by restricting the norms of $M_n(X)$ to $M_n(Y)$. The quotient operator space $X/Y$ is defined by the isometric identifications
\[
M_n(X/Y) = M_n(X)/M_n(Y)
\]

Every Banach space $X$ has a minimal and a maximal operator space structure, respectively denoted by $\min(X)$ and $\max(X)$ \cite{Blecher1991}. This minimality/maximality is expressed by the fact that if $X$ is endowed with an operator space structure, then we have c.b. norm $1$ inclusions
\[
\max(X) \subset X \subset \min(X)
\]

We shall use the following characterizations for $\min(X)$:
\begin{equation}\label{eq:1.3}
    \|x\|_{M_n(\min(X))} = \sup\{\|\sum \lambda_i \mu_j x_{i,j}\|_X : \sum \left|\lambda_i\right|^2 \leq 1, \sum \left|\mu_j\right|^2 \leq 1\}
\end{equation}

\begin{equation}\label{eq:1.4}
    \|x\|_{M_n(\min(X))} = \sup\{\|f_n(x)\|_{M_n} : f \in X^*, \|f\| \leq 1\}
\end{equation}

We shall also use the fact that
\begin{equation}\label{eq:1.5}
    \|x\|_{M_n(\max(X))} = \inf\{\|A\|\|D\|\|B\|\}
\end{equation}

\noindent where the infimum is over all decompositions $x = ADB$, with $A$ and $B$ scalar matrices and $D$ a diagonal matrix with entries in $X$ \cite{Paulsen1992}.

Let $X$ and $Y$ be operator spaces, and let $CB(X, Y)$ be the space of completely operators from $X$ to $Y$ with the c.b. norm. The operator space structure of the dual of $X$ is given by the natural identification $M_n(X^*) = CB(X, M_n)$.

In particular, we have completely isometric identifications $\min(X^*) = (\max(X))^*$ and $\max(X^*) = (\min(X))^*$.

Also, if $T : X \rightarrow Y$ we have
\[
\|T : \max(X) \rightarrow Y\|_{cb} = \|T : X \rightarrow \min(Y)\|_{cb} = \|T\|
\]

An operator space $X$ is called \textit{homogeneous} if for every operator $T : X \rightarrow X$, we have $\|T\|_{cb} = \|T\|$. So $\min(X)$ and $\max(X)$ are homogeneous operator spaces, that is, if $T : X \rightarrow X$, we have:
\begin{equation}\label{eq:1.6}
\|T : \min(X) \rightarrow \min(X)\|_{cb} = \|T : \max(X) \rightarrow \max(X)\|_{cb} = \|T\|
\end{equation}

If $X$ is infinite dimensional then $\min(X)$ and $\max(X)$ are not completely isomorphic.

We will also consider the row and column operator space structures on $\ell_2$, denoted respectively by $R$ and $C$, which are given by
\begin{eqnarray*}
\|\sum x_k \otimes e_k\|_{M_n(R)} & = & \|\sum x_k x_k^*\|^{\frac{1}{2}} \\
\|\sum x_k \otimes e_k\|_{M_n(C)} & = & \|\sum x_k^* x_k\|^{\frac{1}{2}}
\end{eqnarray*}

\noindent where $(e_k)_{k \in \mathbb{N}}$ is any orthonormal basis for $\ell_2$. These are also homogeneous operator spaces.

\section{Complete twisted sums}\label{sec:2}
\subsection{Basic definitions and results}Suppose we have an exact sequence of Banach spaces
  \[
  \xymatrix{ 0 \ar[r] & Y \ar[r]^i & X
    \ar[r]^q & Z\ar[r]  &0}
  \]

\noindent where the arrows are bounded linear maps. It is a consequence of the open mapping theorem that $i$ is an isomorphism onto its image and that $q$ induces an isomorphism between $X/Y$ and $Z$.

As noted in \cite{Wood1999}, the situation is different when we deal with short exact sequences of operator spaces. Let $X$ be an infinite dimensional Banach space and consider the following exact sequences in the category of operator spaces:
\[
  \xymatrix{ 0 \ar[r] & \max(X) \ar[r]^{Id_X} & \min(X)
    \ar[r] & 0\ar[r]  &0}
  \]
  
\noindent and

  \[
  \xymatrix{ 0 \ar[r] & 0 \ar[r] & \max(X)
    \ar[r]^{Id_X} & \min(X)\ar[r]  &0}
  \]
  
Then the arrows are completely operators, but $Id_X$ is not an isomorphism. This motivates the following definition \cite{Wood1999}:
\begin{definition}\label{def:2.1}
An \textit{extension sequence} of operator spaces is a short exact sequence
\[
  \xymatrix{ 0 \ar[r] & Y \ar[r]^i & X
    \ar[r]^{q} & Z\ar[r]  &0}
  \]
where $Y$, $X$ and $Z$ are operator spaces, the arrows are completely operators, $i$ is a complete isomorphism onto its image, and the completely operator induced by $q$ is a complete isomorphism.
\end{definition}

We will also say that $X$ is a \textit{complete twisted sum} of $Y$ and $Z$. If $Y = Z$, we will say that $X$ is a \textit{complete extension} of $Y$.

We recall that a \textit{complete surjection} $q : X \rightarrow Z$ between operator spaces is a completely bounded surjective map such that the induced operator from $X/\ker(q)$ onto $Z$ is a complete isomorphism. That is, in the definition of an extension sequence, the quotient is a complete surjection. We have that this happens for $q$ completely bounded if and only if
\[
Id_{\mathcal{K}_0} \otimes q : \mathcal{K} \otimes_{\min} X \rightarrow \mathcal{K} \otimes_{\min} Z
\]

\noindent is a surjection \cite{Pisier04}. Also, $i : Y \rightarrow X$ is a complete isomorphism if and only if
\[
Id_{\mathcal{K}_0} \otimes i : \mathcal{K} \otimes_{\min} Y \rightarrow \mathcal{K} \otimes_{\min} X
\]

\noindent is an isomorphism \cite{Pisier04}. This already allows us to prove some basic facts about extension sequences.

\begin{props}\label{prop:2.2}
Every extension sequence
\[
  \xymatrix{ 0 \ar[r] & Y \ar[r]^i & X
    \ar[r]^{q} & Z\ar[r]  &0}
  \]
induces a short exact sequence of Banach spaces
\[
  \xymatrix{ 0 \ar[r] & \mathcal{K}\otimes_{\min}Y \ar[r]^{Id_{\mathcal{K}_0}\otimes i} & \mathcal{K}\otimes_{\min}X
    \ar[r]^{Id_{\mathcal{K}_0}\otimes q} & \mathcal{K}\otimes_{\min}Z\ar[r]  &0}
  \]
\end{props}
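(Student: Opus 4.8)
The plan is to reduce everything to the two equivalences recorded just before the statement and to supply the one ingredient they do not directly give, namely exactness in the middle. First I would invoke the hypothesis that $i$ is a complete isomorphism onto its image to conclude, via the stated criterion, that $Id_{\mathcal{K}_0}\otimes i$ is an isomorphism onto its image; in particular its range is a \emph{closed} subspace of $\mathcal{K}\otimes_{\min}X$. Likewise, since $q$ is a complete surjection, the corresponding criterion gives that $Id_{\mathcal{K}_0}\otimes q$ is surjective, hence a quotient map by the open mapping theorem. The only thing left to verify is that $\ker(Id_{\mathcal{K}_0}\otimes q)=\Ima(Id_{\mathcal{K}_0}\otimes i)$. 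The inclusion $\Ima(Id_{\mathcal{K}_0}\otimes i)\subseteq\ker(Id_{\mathcal{K}_0}\otimes q)$ is immediate: on $\mathcal{K}_0\otimes Y$ one has $(Id_{\mathcal{K}_0}\otimes q)\circ(Id_{\mathcal{K}_0}\otimes i)=Id_{\mathcal{K}_0}\otimes(q\circ i)=0$, and this persists on the completion by continuity.

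For the reverse inclusion I would identify $Y$ with the closed subspace $i(Y)=\ker(q)$ of $X$, carrying its subspace operator space structure, so that $\Ima(Id_{\mathcal{K}_0}\otimes i)$ is exactly $\mathcal{K}\otimes_{\min}i(Y)$ sitting inside $\mathcal{K}\otimes_{\min}X$. Now take $u\in\mathcal{K}\otimes_{\min}X$ with $(Id_{\mathcal{K}_0}\otimes q)(u)=0$ and apply the truncations $\pi_n$. Since $\pi_n$ acts by deleting entries while $Id_{\mathcal{K}_0}\otimes q$ acts entrywise, the two commute, so $q_n(\pi_n(u))=\pi_n\big((Id_{\mathcal{K}_0}\otimes q)(u)\big)=0$. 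Because $q_n$ is applied entry by entry, its kernel is $M_n(\ker q)=M_n(i(Y))$, whence $\pi_n(u)\in M_n(i(Y))\subset\mathcal{K}_0\otimes i(Y)$. Finally I would use $\pi_n(u)\to u$ together with the closedness of $\mathcal{K}\otimes_{\min}i(Y)$ to conclude $u\in\mathcal{K}\otimes_{\min}i(Y)=\Ima(Id_{\mathcal{K}_0}\otimes i)$.

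The step I expect to carry the real weight is the identification $\Ima(Id_{\mathcal{K}_0}\otimes i)=\mathcal{K}\otimes_{\min}i(Y)$ as a closed subspace of $\mathcal{K}\otimes_{\min}X$. This is where injectivity of the minimal tensor product is essential: it guarantees that $\mathcal{K}\otimes_{\min}$ of the subspace $i(Y)$ genuinely embeds (completely isometrically) as the closure of $\mathcal{K}_0\otimes i(Y)$ inside $\mathcal{K}\otimes_{\min}X$, which is what makes the limiting argument close. Everything else is formal: algebraically $\mathcal{K}_0\otimes(-)$ is exact, and the content of the proposition is precisely that this exactness survives completion, which the truncation maps $\pi_n$ deliver. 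I would also note that the same computation shows $\ker(Id_{\mathcal{K}_0}\otimes q)$ is already determined on the dense subspace $\mathcal{K}_0\otimes X$, so no completeness issue arises on the quotient side.
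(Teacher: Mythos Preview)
Your proof is correct and follows essentially the same route as the paper's: both use the criteria recalled just before the proposition for injectivity and surjectivity, then commute the truncations $\pi_n$ with $Id_{\mathcal{K}_0}\otimes q$ to put $\pi_n(u)$ into $M_n(i(Y))$ and pass to the limit. The only cosmetic difference is that the paper writes out the Cauchy-sequence argument for the preimages $y_n=(Id_{\mathcal{K}_0}\otimes i)^{-1}(\pi_n(u))$ explicitly, whereas you package the same content as ``the image is closed''; your appeal to injectivity of the minimal tensor product is just a rephrasing of the cited criterion that $Id_{\mathcal{K}_0}\otimes i$ is an isomorphism onto its image.
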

\begin{proof}
That $Id_{\mathcal{K}_0} \otimes i$ is injective follows from it being an isomorphism, and $Id_{\mathcal{K}_0} \otimes q$ is a surjection since $q$ is a complete surjection. So we need to prove that the image of $Id_{\mathcal{K}_0} \otimes i$ coincides with the kernel of $Id_{\mathcal{K}_0} \otimes q$.

Let $y \in \mathcal{K} \otimes_{\min} Y$ and take $y_n \in \mathcal{K}_0 \otimes_{\min} Y$ converging to $y$. Then
\begin{equation*}
Id_{\mathcal{K}_0}\otimes q (Id_{\mathcal{K}_0} \otimes i (y))  =  \lim Id_{\mathcal{K}_0} \otimes q (Id_{\mathcal{K}_0} \otimes i(y_n)) 
 =  0
\end{equation*}

Indeed, if $y_n \in M_k(Y)$, then $Id_{\mathcal{K}_0} \otimes q (Id_{\mathcal{K}_0} \otimes i(y_n)) = q_k i_k (y_n) = 0$.

To prove the reverse inclusion, let $x$ be in the kernel of $Id_{\mathcal{K}_0} \otimes q$. For every $n$ it is easy to see that $Id_{\mathcal{K}_0} \otimes q$ and $\pi_n$ commute on $\mathcal{K}_0 \otimes X$. Therefore $Id_{\mathcal{K}_0} \otimes q$ and $\pi_n$ commute on $\mathcal{K} \otimes_{\min} X$, and
\begin{equation*}
Id_{\mathcal{K}_0} \otimes q(\pi_n(x))  =  \pi_n (Id_{\mathcal{K}_0} \otimes q(x))  =  0
\end{equation*}

By the exactness of the original sequence, there is $y_n \in \mathcal{K}_0 \otimes Y$ such that $\pi_n(x) = Id_{\mathcal{K}_0} \otimes i(y_n)$. Since $i$ is a complete isomorphism, there is a constant $C$ such that, for every $n$ and $m$, we have
\begin{equation*}
\|y_n - y_m\|  \leq  C\|Id_{\mathcal{K}_0} \otimes i (y_n - y_m)\| 
               =  C \|\pi_n(x) - \pi_m(x)\|
\end{equation*}

So $(y_n)$ is a Cauchy sequence, and there is $y \in \mathcal{K} \otimes_{\min} Y$ such that $(y_n)$ converges to $y$. Then
\begin{equation*}
Id_{\mathcal{K}_0} \otimes i (y)  =  \lim Id_{\mathcal{K}_0} \otimes i (y_n)
     =  \lim \pi_n(x)
     =  x
\end{equation*}

Therefore, $\ker Id_{\mathcal{K}_0} \otimes q = \Ima Id_{\mathcal{K}_0} \otimes i$.
\end{proof}

It is then natural that much of the behaviour of twisted sums of Banach spaces will be reproduced in the noncommutative case. For example, from the last proposition we can also reclaim the classical 3-lemma to the operator space setting.

\begin{props}[3-lemma]\label{prop:2.3}
Suppose we have a commutative diagram
  \[
  \xymatrix{ 0 \ar[r] & A \ar[r]\ar[d]^{\alpha} & B
    \ar[r]\ar[d]^{\beta} & C\ar[r] \ar[d]^{\gamma} &0 \\ 0 \ar[r] & D
    \ar[r] & E \ar[r]& F\ar[r] &0}
  \]
where the rows are extension sequences and $\alpha$, $\beta$ and $\gamma$ are completely operators. If $\alpha$ and $\gamma$ are complete surjections (complete isomorphisms), so is $\beta$.
\end{props}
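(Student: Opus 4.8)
The plan is to reduce the operator-space statement to the classical Banach-space 3-lemma by applying the functor $\mathcal{K} \otimes_{\min} (-)$, using Proposition \ref{prop:2.2} as the bridge. First I would observe that tensoring the entire commutative diagram with $\mathcal{K}_0$ (more precisely, applying $Id_{\mathcal{K}_0} \otimes (-)$ to each space and each map) produces a new commutative diagram of Banach spaces. By Proposition \ref{prop:2.2}, the two rows become genuine short exact sequences of Banach spaces, since the original rows were extension sequences. Commutativity is preserved because it is a purely algebraic condition on $\mathcal{K}_0 \otimes X$, and extends to the completions by density of $\mathcal{K}_0 \otimes X$ in $\mathcal{K} \otimes_{\min} X$ together with continuity of all the maps involved.

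Next I would translate each hypothesis on $\alpha, \beta, \gamma$ into the corresponding statement about $Id_{\mathcal{K}_0} \otimes \alpha$, etc. The key facts I would invoke are exactly the two equivalences recalled just before Proposition \ref{prop:2.2}: a completely bounded map is a complete surjection precisely when the tensored map $Id_{\mathcal{K}_0} \otimes (-)$ is a surjection of Banach spaces, and it is a complete isomorphism onto its image precisely when the tensored map is a Banach-space isomorphism onto its image. Hence the assumption that $\alpha$ and $\gamma$ are complete surjections (respectively complete isomorphisms) says exactly that $Id_{\mathcal{K}_0} \otimes \alpha$ and $Id_{\mathcal{K}_0} \otimes \gamma$ are surjections (respectively isomorphisms) of Banach spaces.

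Now the classical 3-lemma for Banach spaces applies directly to the tensored diagram: from the two short exact rows, commutativity, and the fact that the outer vertical maps $Id_{\mathcal{K}_0}\otimes\alpha$ and $Id_{\mathcal{K}_0}\otimes\gamma$ are surjections (respectively isomorphisms), one concludes that the middle map $Id_{\mathcal{K}_0} \otimes \beta$ is a surjection (respectively an isomorphism) of Banach spaces. A standard diagram chase gives this; I would not reproduce it, merely cite it as the classical result. Finally I would reverse the dictionary one more time: since $Id_{\mathcal{K}_0} \otimes \beta$ is a surjection, $\beta$ is a complete surjection, and since $Id_{\mathcal{K}_0} \otimes \beta$ is an isomorphism, $\beta$ is a complete isomorphism, which is the desired conclusion.

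The main obstacle I anticipate is not the diagram chase itself, which is entirely classical, but the bookkeeping needed to justify that applying $Id_{\mathcal{K}_0} \otimes (-)$ really yields a commutative diagram of Banach spaces with exact rows. The genuine content has already been isolated in Proposition \ref{prop:2.2} (exactness of the rows) and in the two cited equivalences from \cite{Pisier04} (the surjection/isomorphism dictionary); once these are in hand, the argument is a transport of the Banach-space result along the functor, so the only care required is to confirm that commutativity and the relevant injectivity/surjectivity properties pass correctly between the operator-space and Banach-space levels.
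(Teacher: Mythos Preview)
Your proposal is correct and follows exactly the approach the paper takes: the paper's proof is the single sentence ``One just has to look at the induced diagram in the Banach space setting and use the classical 3-lemma (\cite{Castillo02}, page 3),'' which is precisely the reduction via $\mathcal{K}\otimes_{\min}(-)$ that you spell out in detail using Proposition~\ref{prop:2.2} and the surjection/isomorphism dictionary from \cite{Pisier04}.
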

\begin{proof}
One just has to look at the induced diagram in the Banach space setting and use the classical 3-lemma (\cite{Castillo02}, page 3).
\end{proof}

The following definition is inspired by the Banach space setting.

\begin{definition}\label{def:2.4}
Suppose $X_i$ is a complete twisted sum of $Y_i$ and $Z_i$, $i = 1, 2$, and that we have a commutative diagram
  \[
  \xymatrix{ 0 \ar[r] & Y_1 \ar[r]\ar[d]^{\alpha} & X_1
    \ar[r]\ar[d]^{\beta} & Z_1\ar[r] \ar[d]^{\gamma} &0 \\ 0 \ar[r] & Y_2
    \ar[r] & X_2 \ar[r]& Z_2\ar[r] &0}
  \]
  
The complete twisted sums $X_1$ and $X_2$ are
\begin{enumerate}
\item \textit{completely isomorphically equivalent}, if $\alpha$, $\beta$ and $\gamma$ are complete isomorphisms;
\end{enumerate}

If $Y_1 = Y_2$ and $Z_1 = Z_2$,
\begin{enumerate}
\item[(2)] \textit{completely projectively equivalent}, if $\alpha$ and $\gamma$ are multiples of the identity and $\beta$ is a complete isomorphism;
\item[(3)] \textit{completely equivalent}, if $\alpha = Id_{Y_1}$, $\gamma = Id_{Z_1}$, and $\beta$ is a complete isomorphism.
\end{enumerate}
\end{definition}

Under the hypothesis that we are working with extension sequences, the 3-lemma shows that complete equivalence is the same as the equivalence of 1-extensions of \cite{Paulsen1998}.

We recall that twisted sums of Banach spaces for which the quasi-norm is equivalent to a norm come defined by \textit{0-linear maps} $F : Z \rightarrow Y$, i.e., homogeneous maps for which there is a constant $K > 0$ such that
\begin{equation*}
    \|\sum F(z_i)\| \leq K \sum\|z_i\|
\end{equation*}
whenever $\sum z_i =0$ \cite{Castillo02}. That is, given a 0-linear map $F : Z \rightarrow Y$ we can endow the product $Y \times Z$ with the quasi-norm $\|(y, z)\|_F = \|y - Fz\| + \|z\|$ which is equivalent to a norm, and we have that $Y \cong \{(y, 0) : y \in Y\}$ by the inclusion $i(y) = (y,0)$, and the quotient map $q(y, z) = z$ induces an isomorphism $Z \cong (Y \oplus_F Z)/F$. 

Conversely, given a twisted sum $X$ of $Y$ and $Z$ such that the quasi-norm on $X$ is equivalent to a norm, we can get a 0-linear map $F : Z \rightarrow Y$ such that the twisted sum $Y \oplus_F Z$ is equivalent to $X$. 

It follows that $Y \oplus_F Z$ is equivalent to $Y \oplus_G Z$ if and only if there is a linear map $l : Z \rightarrow Y$ such that
\begin{equation*}
    \|F - G - l\| =  \sup\limits_{\|z\| = 1} \|F(z) - G(z) - l(z)\| < \infty
\end{equation*}
\noindent (see \cite{Kalton01}).

A twisted sum $X$ of $Y$ and $Z$ is said \textit{trivial} if the image of $Y$ under the inclusion is complemented in $X$, and for $Y \oplus_F Z$ this is equivalent to $F$ being at finite distance from a linear map.

Suppose we have a (Banach) twisted sum
\[
  \xymatrix{ 0 \ar[r] & Y \ar[r]^{i} & X
    \ar[r]^q & Z\ar[r]  &0}
  \]

\noindent and suppose $Y$, $X$ and $Z$ are operator spaces. Then we have an induced twisted sum
\[
  \xymatrix{ 0 \ar[r] & M_n(Y) \ar[r]^{i_n} & M_n(X)
    \ar[r]^{q_n} & M_n(Z)\ar[r]  &0}
  \]
  
\noindent for every $n \in \mathbb{N}$. The following proposition shows the regularity of this construction.

\begin{props}\label{prop:2.5}
Suppose we have a twisted sum of Banach spaces
\begin{equation}\label{eq:2.1}
  \xymatrix{ 0 \ar[r] & Y \ar[r]^{i} & X
    \ar[r]^q & Z\ar[r]  &0}
\end{equation}
\noindent and that $Y$, $X$ and $Z$ are operator spaces. Then, for every $n \in \mathbb{N}$, \eqref{eq:2.1} is trivial if and only if the twisted sum
\[
  \xymatrix{ 0 \ar[r] & M_n(Y) \ar[r]^{i_n} & M_n(X)
    \ar[r]^{q_n} & M_n(Z)\ar[r]  &0}
  \]
\noindent is trivial.
\end{props}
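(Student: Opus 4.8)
The plan is to reduce triviality of either sequence to the existence of a bounded linear retraction onto the subspace, and then to pass between the two levels using corner embeddings, relying only on the elementary norm estimate \eqref{eq:1.2}.

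Recall that, by the definition of triviality, the sequence \eqref{eq:2.1} is trivial precisely when $i(Y)$ is complemented in $X$, which (since $i$ is an isomorphism onto its image) is equivalent to the existence of a \emph{bounded} linear map $r : X \to Y$ with $r\circ i = \mathrm{Id}_Y$. Likewise the amplified sequence is trivial iff there is a bounded $R : M_n(X) \to M_n(Y)$ with $R\circ i_n = \mathrm{Id}_{M_n(Y)}$. So the whole statement amounts to producing one such retraction from the other. Note that no complete boundedness is required here: triviality is a purely Banach-space (complementation) notion.

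For the forward implication, given $r$ I would define $r_n : M_n(X)\to M_n(Y)$ entrywise by $r_n((x_{ij})) = (r(x_{ij}))$. Boundedness is immediate from \eqref{eq:1.2}, since $\|r_n((x_{ij}))\|_{M_n(Y)}\le \sum_{i,j}\|r(x_{ij})\|_Y \le n^2\|r\|\,\|(x_{ij})\|_{M_n(X)}$. As both $i_n$ and $r_n$ act entry by entry, $r_n\circ i_n = \mathrm{Id}_{M_n(Y)}$, so the amplified sequence is trivial. The reverse implication is the one real point. Given a bounded $R : M_n(X)\to M_n(Y)$ with $R i_n = \mathrm{Id}$, introduce the corner embedding $\theta : X \to M_n(X)$ placing $x$ in the $(1,1)$-entry and zero elsewhere, together with the corner projection $\varrho : M_n(Y)\to Y$ reading off the $(1,1)$-entry; both are bounded by \eqref{eq:1.2}. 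Set $r = \varrho\circ R\circ\theta$. The key compatibility is $\theta\circ i = i_n\circ \sigma$, where $\sigma : Y \to M_n(Y)$ is the corner embedding, which holds because $i_n$ is applied entrywise and $i(0)=0$. Hence for $y\in Y$ one gets $r(i(y)) = \varrho R i_n \sigma(y) = \varrho\,\sigma(y) = y$, i.e.\ $r\circ i = \mathrm{Id}_Y$ with $r$ bounded, so \eqref{eq:2.1} is trivial.

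The main obstacle, such as it is, lies in the reverse direction, where one must descend from $M_n(X)$ back to $X$: the corner embeddings supply this descent, and the argument closes as soon as one records the compatibility $\theta\circ i = i_n\circ\sigma$. Everything else is routine boundedness bookkeeping via \eqref{eq:1.2}. I would also remark that the statement is really an equivalence for each \emph{fixed} $n$, so the quantifier ``for every $n$'' is a consequence of proving the two implications with $n$ arbitrary.
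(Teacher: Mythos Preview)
Your proof is correct and rests on the same core device as the paper's: pass between the two levels by corner embedding and corner projection, with boundedness supplied by \eqref{eq:1.2}. The only difference is in framing---the paper works with the defining quasilinear map $F:Z\to Y$, observes that the entrywise $F_n$ defines $M_n(X)$, and compresses a linear map $A:M_n(Z)\to M_n(Y)$ with $F_n-A$ bounded down to $\tilde A=\pi\circ A\circ j:Z\to Y$ with $F-\tilde A$ bounded; you instead compress a retraction $R:M_n(X)\to M_n(Y)$ to $r=\varrho\circ R\circ\theta:X\to Y$. Your version is marginally more self-contained (no need to invoke the quasilinear/0-linear machinery), but the underlying idea is identical.
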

\begin{proof}
Let $F : Z \rightarrow Y$ be the quasilinear map that defines the twisted sum, i.e., there is a bounded linear map $T : X \rightarrow Y \oplus_F Z$ making the following diagram commute:
  \[
  \xymatrix{ 0 \ar[r] & Y \ar[r]\ar@{=}[d] & X
    \ar[r]\ar[d]^{T} & Z\ar[r] \ar@{=}[d] &0 \\ 0 \ar[r] & Y
    \ar[r] & Y \oplus_F Z \ar[r]& Z\ar[r] &0}
  \]
  
Then, by \eqref{eq:1.2}, for every $n$, $F_n : M_n(Z) \rightarrow M_n(Y)$ defined by $(z_{i, j}) \mapsto (F(z_{i, j}))$ is a quasilinear map and we have a commutative diagram
  \[
  \xymatrix{ 0 \ar[r] & M_n(Y) \ar[r]\ar@{=}[d] & M_n(X)
    \ar[r]\ar[d]^{T_n} & M_n(Z)\ar[r] \ar@{=}[d] &0 \\ 0 \ar[r] & M_n(Y)
    \ar[r] & M_n(Y) \oplus_{F_n} M_n(Z) \ar[r]& M_n(Z)\ar[r] &0}
  \]
  
This means that $F_n$ defines the twisted sum $M_n(X)$ for every $n$.

The only if part is simple. Let us show the if part.

If $M_n(X)$ is trivial, there is $A : M_n(Z) \rightarrow M_n(Y)$ linear such that $F_n - A$ is bounded.

Consider $j : Z \rightarrow M_n(Z)$ the inclusion in the first entry and $\pi : M_n(Y) \rightarrow Y$ the projection of the first entry, and let $\tilde{A} = \pi \circ A \circ j : Z \rightarrow Y$. Then $\tilde{A}$ is linear and
\begin{equation*}
    \|F - \tilde{A}\| = \|\pi \circ F_n \circ j - \pi \circ A \circ j\| \leq \|F_n - A\| < \infty
\end{equation*}

Therefore $F$ is trivial.
\end{proof}

Notice that there is no reason for $T$ being a completely operator, as we shall see in \hyperref[sec:4]{Section 4}.

Given two operator spaces $Y$ and $Z$, we have their \textit{trivial complete twisted sum}, given by the extension sequence:
\[
  \xymatrix{ 0 \ar[r] & Y \ar[r]^{i} & Y \oplus Z
    \ar[r]^q & Z\ar[r]  &0}
\]
\noindent where $i$ is inclusion in the first coordinate and $q$ is the projection of the second coordinate.

\begin{definition}\label{def:2.6}
Let $Y$ and $Z$ be operator spaces. A complete twisted sum $X$ of $Y$ and $Z$ will be called \textit{completely trivial} if it is completely equivalent to $Y \oplus Z$.
\end{definition}

In the same manner, we will talk of completely trivial extension sequences. In the language of \cite{Paulsen1998}, the sequence is $\mathbb{C}-\mathbb{C}-$split, and in the language of \cite{Wood1999}, it is admissible.

It is clear that if a complete twisted sum is completely trivial, it is trivial as a Banach space twisted sum. As we will see, the converse is not true.

We recall that if $T : X \rightarrow Y$ is a linear map, a \textit{section} of $T$ is a linear map $s : Y \rightarrow X$ such that $T \circ s = Id_Y$ and a \textit{retraction} of $T$ is a linear map $r : Y \rightarrow X$ such that $r \circ T = Id_X$.

The proof of the following proposition is similar to the proof in the Banach space case, and may be essentially found in \cite{Wood1999} (Lemma 3.2.6).
\begin{props}\label{prop:2.7}
Let
\[
  \xymatrix{ 0 \ar[r] & Y \ar[r]^{i} & X
    \ar[r]^q & Z\ar[r]  &0}
  \]
  
\noindent be a complete twisted sum. The following are equivalent:
\begin{enumerate}
    \item [a)] It is completely trivial.
    \item [b)] There is a completely bounded section of q.
    \item [c)] There is a completely bounded retraction of i.
\end{enumerate}
\end{props}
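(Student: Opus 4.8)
The plan is to prove the three equivalences via the cycle $(a) \Rightarrow (b) \Rightarrow (c) \Rightarrow (a)$, exploiting at each stage the standard splitting lemma from homological algebra while keeping careful track of complete boundedness. The guiding principle is that everything is formally identical to the Banach space case, and the only genuinely new content is verifying that the maps produced are completely bounded rather than merely bounded. For this reason I will phrase each construction so that the relevant map is manifestly built out of the completely bounded data of the sequence.

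First I would prove $(a) \Rightarrow (b)$. By Definition~\ref{def:2.6}, complete triviality means there is a complete isomorphism $\beta : X \to Y \oplus Z$ fitting into the commutative diagram with $\alpha = Id_Y$ and $\gamma = Id_Z$. Composing the canonical completely bounded inclusion $Z \to Y \oplus Z$, $z \mapsto (0,z)$, with $\beta^{-1}$ yields a map $s : Z \to X$; commutativity of the diagram gives $q \circ s = Id_Z$, so $s$ is a section, and it is completely bounded because it is a composition of completely bounded maps (here I use that $\beta^{-1}$ is completely bounded, which is exactly the content of $\beta$ being a complete isomorphism).

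Next, $(b) \Rightarrow (c)$. Given a completely bounded section $s$ of $q$, I would define $r : X \to Y$ by $r(x) = i^{-1}(x - s(q(x)))$, where $i^{-1}$ denotes the inverse of the complete isomorphism $i : Y \to i(Y)$ onto its image. One checks $x - s(q(x))$ lands in $\ker q = \Ima i$ since $q(x - s q(x)) = q(x) - q(x) = 0$, and then $r \circ i = Id_Y$ follows from $q \circ i = 0$. Complete boundedness of $r$ follows from that of $i^{-1}$ (again using that $i$ is a complete isomorphism onto its image, part of Definition~\ref{def:2.1}), of $s$, and of $q$. The symmetric argument gives $(c) \Rightarrow (b)$ as well, but the cycle does not require it.

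Finally, $(c) \Rightarrow (a)$. Given a completely bounded retraction $r$ of $i$, I would define $\beta : X \to Y \oplus Z$ by $\beta(x) = (r(x), q(x))$. This is completely bounded as a pair of completely bounded maps into an $\infty$-direct sum, whose operator space norm is the maximum of the two coordinate norms. The diagram commutes by construction: $\beta \circ i = (r \circ i, q \circ i) = (Id_Y, 0)$ and projecting $\beta(x)$ to $Z$ recovers $q(x)$. It remains to see $\beta$ is a complete isomorphism, and here I would invoke the 3-lemma (Proposition~\ref{prop:2.3}) with $\alpha = Id_Y$ and $\gamma = Id_Z$, both trivially complete isomorphisms, to conclude $\beta$ is one too. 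The main obstacle, and the only place where care is truly needed, is precisely this last inference: one must confirm that the outer vertical maps are complete isomorphisms and that the lower row $0 \to Y \to Y \oplus Z \to Z \to 0$ is an extension sequence (which it is, being the trivial complete twisted sum), so that Proposition~\ref{prop:2.3} applies and upgrades the formal bijectivity of $\beta$ to a complete isomorphism without any separate estimate on $\beta^{-1}$.
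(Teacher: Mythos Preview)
Your proof is correct and follows exactly the standard splitting-lemma argument that the paper alludes to; the paper itself does not spell out a proof but simply remarks that it is ``similar to the proof in the Banach space case'' and refers to \cite{Wood1999}. Your use of the 3-lemma (Proposition~\ref{prop:2.3}) in the step $(c)\Rightarrow(a)$ to avoid a direct estimate on $\beta^{-1}$ is the natural way to close the argument in this setting.
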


This allows us to get some cases where triviality and complete triviality are equivalent.

\begin{props}\label{prop:2.8} 
For the following extension sequences, complete triviality and triviality as a Banach spaces twisted sum are equivalent:
\begin{enumerate}
    \item[a)]    \[
    \xymatrix{ 0 \ar[r] & Y \ar[r] & X
    \ar[r] & \max(Z)\ar[r]  &0}
    \]
    
    \item[b)] \[
    \xymatrix{ 0 \ar[r] & \min(Y) \ar[r] & X
    \ar[r] & Z\ar[r]  &0}
    \]
\end{enumerate}
\end{props}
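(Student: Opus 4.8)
The plan is to prove only the nontrivial implication in each case, since the converse (complete triviality implies Banach-space triviality) has already been observed in the text just before Definition~\ref{def:2.6}. By Proposition~\ref{prop:2.7}, complete triviality of an extension sequence is equivalent both to the existence of a completely bounded section of $q$ and to the existence of a completely bounded retraction of $i$. So in each case I want to start from the bounded section/retraction supplied by Banach-space triviality and upgrade it to a completely bounded one, then invoke Proposition~\ref{prop:2.7}.

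First, recall that triviality of the Banach twisted sum means $i(Y)$ is complemented in $X$. A bounded projection $P$ of $X$ onto $i(Y)$ yields a bounded retraction $r = i^{-1}\circ P : X \to Y$ of $i$ (bounded because $i$ is a complete isomorphism onto its image); it also yields a bounded section $s : Z \to X$ of $q$, since $q$ restricts to a Banach isomorphism of the range of $Id - P$ onto $Z$ by the open mapping theorem, and $s$ is its inverse. The key tool is the pair of identities recalled in the preliminaries: for any bounded $T$ one has $\|T : \max(V) \to W\|_{cb} = \|T\|$ and $\|T : V \to \min(W)\|_{cb} = \|T\|$. In words, any bounded map out of a maximal operator space, or into a minimal one, is automatically completely bounded.

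For part a), the quotient space is $\max(Z)$, so the bounded section is a map $s : \max(Z) \to X$ whose domain is a maximal operator space; hence $\|s\|_{cb} = \|s\| < \infty$, and $s$ is a completely bounded section of $q$, so the sequence is completely trivial by Proposition~\ref{prop:2.7}. For part b), the subspace is $\min(Y)$, so the bounded retraction is a map $r : X \to \min(Y)$ whose codomain is minimal; the second identity gives $\|r\|_{cb} = \|r\| < \infty$, so $r$ is a completely bounded retraction of $i$, and again Proposition~\ref{prop:2.7} yields complete triviality.

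The argument is short once the correct reduction is chosen, and there is no serious computational obstacle; the only real point of care is matching each case to the right characterization in Proposition~\ref{prop:2.7}. In part a) one must use the \emph{section}, so that the maximal space sits in the \emph{domain}, where the $\max$ identity applies; in part b) one must use the \emph{retraction}, so that the minimal space sits in the \emph{codomain}. Using the other characterization — say a retraction in part a) — would place the $\max$ space in the codomain, where no automatic complete boundedness is available, and the argument would stall.
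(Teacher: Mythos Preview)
Your proof is correct and matches the paper's approach exactly: the paper's proof is the one-line observation that in case a) any bounded section of $q$ has domain $\max(Z)$ and is therefore completely bounded, while in case b) any bounded retraction of $i$ has codomain $\min(Y)$ and is therefore completely bounded. Your write-up simply unpacks this with more detail and correctly invokes Proposition~\ref{prop:2.7}.
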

\begin{proof}
One only has to notice that, for the first sequence, any bounded section for the quotient map is completely bounded, and in the second any bounded retraction of the inclusion is completely bounded.
\end{proof}

\subsection{Singularity and cosingularity}\label{sec:2.2}
Suppose we have a twisted sum of Banach spaces
\[
    \xymatrix{ 0 \ar[r] & Y \ar[r]^i & X
    \ar[r]^q & Z\ar[r]  &0}
    \]

\noindent and let $W$ be a subspace of $Z$ (subspaces are assumed closed, unless otherwise stated). We can then induce a twisted sum
\[
    \xymatrix{ 0 \ar[r] & Y \ar[r]^{i} & q^{-1}(W)
    \ar[r]^p & W\ar[r]  &0}
    \]
where $p = q|_{q^{-1}(W)}$. The twisted sum is called \textit{singular} if for each $W$ infinite dimensional the induced twisted sum is nontrivial. This happens if and only if the quotient map $q$ is \textit{strictly} singular, i.e., it is not an isomorphism when restricted to any closed infinite dimensional subspace of $X$ (see \cite{Castillo03}, for example).

Analogously, we have
\begin{props}\label{prop:2.9}
Consider an extension sequence
\[
    \xymatrix{ 0 \ar[r] & Y \ar[r]^i & X
    \ar[r]^q & Z\ar[r]  &0}
    \]
There exists an infinite dimensional closed subspace of $X$ such that $q$ restricted to this subspace is a complete isomorphism if and only if there is an infinite dimensional closed subspace of $Z$ such that the induced complete twisted sum is completely trivial.
\end{props}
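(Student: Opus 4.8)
The plan is to reduce both directions to Proposition \ref{prop:2.7}, which says that an extension sequence is completely trivial exactly when its quotient map admits a completely bounded section. The conceptual bridge is that a completely bounded section of $q$ and an infinite dimensional subspace on which $q$ is a complete isomorphism carry the \emph{same} information: the range of a section is such a subspace, and conversely the inverse of $q$ restricted to such a subspace is a section. So the whole argument amounts to passing back and forth between these two descriptions and bookkeeping the operator space structures.

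For the forward implication, suppose $V \subseteq X$ is an infinite dimensional closed subspace with $q|_V$ a complete isomorphism onto its image, and set $W = q(V)$. Because $q|_V$ is a complete isomorphism onto its image, $W$ is closed and infinite dimensional, and $V \subseteq q^{-1}(W)$ since $q(V) = W$. I would then consider the induced extension sequence $0 \to Y \to q^{-1}(W) \xrightarrow{p} W \to 0$ with $p = q|_{q^{-1}(W)}$, and form $s := (q|_V)^{-1} : W \to V \subseteq q^{-1}(W)$. This $s$ is completely bounded (as $q|_V$ is a complete isomorphism), and $p \circ s = Id_W$ because $p$ agrees with $q$ on $q^{-1}(W)$. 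Thus $s$ is a completely bounded section of $p$, and Proposition \ref{prop:2.7} yields that the induced complete twisted sum over $W$ is completely trivial.

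For the converse, suppose $W \subseteq Z$ is infinite dimensional closed with the induced sequence completely trivial. By Proposition \ref{prop:2.7} there is a completely bounded section $s : W \to q^{-1}(W)$ of $p$, so $q \circ s = p \circ s = Id_W$. I would set $V = s(W)$. From $q \circ s = Id_W$ the map $s$ is bounded below, hence an isomorphism onto its image, so $V$ is closed and infinite dimensional. Moreover $q|_V$ is completely bounded (a restriction of $q$), it is a bijection $V \to W$, and its inverse is $s : W \to V$, which is completely bounded; hence $q|_V$ is a complete isomorphism, which is the subspace we seek.

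The only genuinely technical point—and what I expect to be the main obstacle—is checking that $0 \to Y \to q^{-1}(W) \xrightarrow{p} W \to 0$ really is an extension sequence, i.e.\ that $p$ is a complete surjection and not merely a surjection; everything else is the clean correspondence above. This I would verify by noting that $M_n(q^{-1}(W)) = q_n^{-1}(M_n(W))$, so at each matrix level the induced map $q^{-1}(W)/Y \to W$ is the restriction of the complete isomorphism $X/Y \cong Z$ to the subspace $M_n(q^{-1}(W))/M_n(Y)$, and is therefore an isomorphism onto $M_n(W)$ with constants uniform in $n$. One must also take care throughout to use complete (not merely bounded) invertibility of the restricted maps, which is exactly what the hypothesis "complete isomorphism" and the conclusion of Proposition \ref{prop:2.7} supply.
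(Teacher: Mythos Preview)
Your argument is correct and is precisely the natural adaptation the paper has in mind: the paper does not write out a proof of this proposition at all, introducing it with ``Analogously, we have'' after recalling the Banach space characterization of singularity, so your reduction to Proposition~\ref{prop:2.7} via the section/subspace correspondence is exactly the intended route. Your care in verifying that the induced sequence over $W$ is itself an extension sequence (via $M_n(q^{-1}(W)) = q_n^{-1}(M_n(W))$ and the uniform constants coming from $q$ being a complete surjection) is the one point the paper leaves entirely implicit, and you handle it correctly.
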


So we are led to the following definition:

\begin{definition}\label{def:2.10}
Let $X$ and $Z$ be operator spaces and let $T : X \rightarrow Z$ be a completely operator. We say that $T$ is \textit{completely strictly singular} (c.s.s.) if $T$ is not a complete isomorphism when restricted to any infinite dimensional closed subspace of $X$.

A complete twisted sum (or extension sequence) will be called \textit{completely singular} if the quotient map is a c.s.s. operator.
\end{definition}

A word of caution is needed: $T$ being c.s.s. is \textit{not} equivalent to $Id_{\mathcal{K}_0} \otimes T$ being strictly singular, as the next example shows.

\begin{example}\label{ex:2.11}
Consider $Id_{\ell_2} : \max(\ell_2) \rightarrow \min(\ell_2)$. It is completely bounded, is an isometry, but is not a complete isomorphism. Actually, it is c.s.s. Indeed, if $W$ is any closed infinite dimensional subspace of $\max(\ell_2)$, then it is completely isometric to $\max(\ell_2)$ \cite{Vinod01}, and its image is completely isometric to $\min(\ell_2)$. Therefore $Id_{\ell_2}|_W$ is not a complete isomorphism.

However, by \eqref{eq:1.2}, for every $n \in \mathbb{N}$, $Id_{\mathcal{K}_0} \otimes Id_{\ell_2}$ is an isomorphism when restricted to $M_n \otimes \ell_2$.
\end{example}

This example also shows that while singularity clearly implies complete singularity, even an isometry might be completely singular.

Twisted sums of Banach spaces in which the inclusion is strictly cosingular also appear in the literature, for example, the Kalton-Peck spaces \cite{Kalton01}.

\begin{definition}\label{def:2.12}
A completely operator $T : Y \rightarrow X$ between operator spaces is \textit{completely strictly cosingular} (c.s.c.) if whenever there is $q : X \rightarrow E$ such that $q$ and $q \circ T : Y \rightarrow E$ are complete quotient maps, it follows that $E$ is finite dimensional.

A complete twisted sum (or extension sequence) will be called \textit{completely cosingular} if the inclusion is completely strictly cosingular.
\end{definition}

\begin{example}\label{ex:2.13} Again, strict cosingularity implies complete strict cosingularity, but the converse is not true. By \hyperref[ex:2.11]{Example 2.11} and part (2) of \hyperref[prop:2.15]{Proposition 2.15}, $Id_{\ell_2} : \max(\ell_2) \rightarrow \min(\ell_2)$ is c.s.c., despite being an onto isometry. Also, $Id_{\mathcal{K}_0} \otimes Id_{\ell_2}$ is not strictly cosingular, since if $P_{11}$ is the projection in the first entry, we can take $q = P_{11} \otimes Id_{\ell_2}$ in the definition of strict cosingularity.
\end{example}

We shall denote the class of completely strictly singular operators from $X$ into $Z$ by $CSS(X, Z)$ and the class of completely strictly cosingular operators from $Y$ into $X$ by $CSC(Y, X)$. The following properties may be proved as in the Banach space case (see \cite{Shannon01}, for example), with the particularity that we cannot use the open mapping theorem. A proof that uses the open mapping theorem may be done by considering the diagram induced by the minimal tensor product with $\mathcal{K}_0$.

\begin{props}\label{prop:2.14}
The classes $CSS(X, Z)$ and $CSC(Y, X)$ are closed by composition with completely operators on the left and on the right.
\end{props}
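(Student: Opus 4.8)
The plan is to reduce all four assertions (that $CSS$ and $CSC$ are each stable under composition on the left and on the right with completely bounded maps) to two elementary principles: a completely bounded map is a complete isomorphism onto its image precisely when it is \emph{uniformly bounded below} at every matrix level, and, by the characterization recalled just before Proposition~\ref{prop:2.2}, a completely bounded map $q$ is a complete surjection precisely when $Id_{\mathcal{K}_0}\otimes q$ is surjective. The singular statements will follow from the first principle, the cosingular ones from the second, and in neither case will one need the open mapping theorem inside the operator space category.

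For $CSS(X,Z)$ I would argue by contradiction using the two-sided complete-boundedness estimates. Suppose $T\colon X\to Z$ is c.s.s. and $B\colon Z\to V$ is completely bounded but $BT$ is not c.s.s.; then $(BT)|_U$ is a complete isomorphism on some infinite dimensional closed $U\subseteq X$, so for every $n$ and $u\in M_n(U)$ one has $\|T_n(u)\|\ge \|B\|_{cb}^{-1}\|(BT)_n(u)\|\ge c\,\|B\|_{cb}^{-1}\|u\|$. Thus $T|_U$ is uniformly bounded below, hence a complete isomorphism on $U$, contradicting that $T$ is c.s.s. For right composition with completely bounded $A\colon W\to X$, if $(TA)|_V$ is a complete isomorphism on an infinite dimensional closed $V\subseteq W$, the same type of inequality shows first that $A|_V$ is uniformly bounded below, hence a complete isomorphism onto a closed infinite dimensional subspace $V'=A(V)\subseteq X$, and then that $T|_{V'}$ is uniformly bounded below; so $T|_{V'}$ is a complete isomorphism, again a contradiction.

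For $CSC(Y,X)$ the crucial observation is purely formal: if a composite $g\circ f$ is surjective then $g$ is surjective, applied at the level of $\mathcal{K}\otimes_{\min}$. Suppose $T\colon Y\to X$ is c.s.c. and $B\colon X\to W$ is completely bounded but $BT$ is not c.s.c.; choose $q\colon W\to E$ with $E$ infinite dimensional so that $q$ and $q\circ(BT)$ are complete surjections, and set $p=qB\colon X\to E$. Since $Id_{\mathcal{K}_0}\otimes(pT)=(Id_{\mathcal{K}_0}\otimes p)(Id_{\mathcal{K}_0}\otimes T)$ is surjective, $Id_{\mathcal{K}_0}\otimes p$ is surjective, so $p$ is a complete surjection by the cited characterization; as $pT=q\circ(BT)$ is a complete surjection as well, the c.s.c. property of $T$ forces $E$ to be finite dimensional, a contradiction. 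The right-composition case is identical in spirit: for completely bounded $A\colon V\to Y$, if some $q\colon X\to E$ makes $q$ and $q\circ(TA)$ complete surjections with $E$ infinite dimensional, then factoring $Id_{\mathcal{K}_0}\otimes(q\circ(TA))=(Id_{\mathcal{K}_0}\otimes qT)(Id_{\mathcal{K}_0}\otimes A)$ shows that $qT$ is a complete surjection, and applying the c.s.c. property of $T$ to the map $q$ yields the contradiction.

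The only genuinely delicate point---the obstacle the surrounding remark warns about---is that we cannot pass from plain surjectivity to a \emph{complete} quotient map by invoking the open mapping theorem directly in the operator space category. I would avoid this entirely by phrasing the cosingular arguments through $Id_{\mathcal{K}_0}\otimes(-)$, where the objects are genuine Banach spaces and the equivalence ``complete surjection $\Leftrightarrow$ $Id_{\mathcal{K}_0}\otimes q$ surjective'' already packages the open mapping theorem for us; everything else is the uniform lower bound that characterizes complete isomorphisms.
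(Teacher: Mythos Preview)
Your proposal is correct. For the $CSS$ part you use essentially the same approach as the paper: uniform lower bounds at each matrix level to show that the relevant restrictions are complete isomorphisms onto closed subspaces.

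For the $CSC$ part you take a genuinely different route. The paper proves directly that the intermediate map ($p=q\circ T$ in one case, $q=q'\circ S$ in the other) is a complete quotient map by exhibiting explicit estimates $\|y+\ker p_n\|\le C\,\|p_n(y)\|$ at each matrix level, using a lifting through the given complete quotient $p'$ (resp.\ $p$). You instead invoke the characterization recalled before Proposition~\ref{prop:2.2}, namely that $q$ is a complete surjection if and only if $Id_{\mathcal{K}_0}\otimes q$ is surjective, and then argue purely algebraically: if a composite is surjective then the outer factor is surjective. This is shorter and more conceptual; the paper's route has the minor advantage of producing explicit constants without appealing to the Banach open mapping theorem at all. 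In fact the paper itself mentions your alternative in the sentence preceding the proposition: ``A proof that uses the open mapping theorem may be done by considering the diagram induced by the minimal tensor product with $\mathcal{K}_0$.'' That is exactly what you have done.
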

\begin{proof}
Let $T \in CSS(X, Z)$ and $S \in CB(X', X)$. We will prove that $T\circ S \in CSS(X', Z)$.

Suppose that $W$ is a closed subspace of $X'$ such that $(T\circ S)|_W$ is a complete isomorphism onto its image. Then, there is $C>0$ such that for all $w \in M_n(W)$, $n \in \mathbb{N}$,
\[
\|S_n(w)\| \leq \|S\|_{cb}\|w\| \leq C\|S\|_{cb}\|T_n \circ S_n (w)\| \leq C\|T\|_{cb}\|S\|_{cb}\|S_n(w)\|
\]

\noindent which shows that $T|_{S(W)} : S(W) \rightarrow Z$ is a complete isomorphism onto its image, and since $T(S(W))$ is closed, so is $S(W)$. Since $T$ is c.s.s., this means that $S(W)$ is finite dimensional.

But $(T\circ S)|_W$ being a complete isomorphism implies that $S|_W$ is injective, and therefore $W$ is finite dimensional, which proves that $T \circ S \in CSS(X', Z)$.

Now let $S \in CB(Z, Z')$. We will prove that $S \circ T \in CSS(X, Z')$.

Let $W$ be a closed subspace of $X$ such that $(S \circ T)|_W$ is a complete isomorphism. This implies that there is a constant $C > 0$ such that for every $w \in M_n(W)$, $n \in \mathbb{N}$,
\[
\|w\| \leq C \|S_n \circ T_n(w)\| \leq C\|S\|_{cb}\|T_n(w)\|
\]

\noindent which implies that $T$ is a complete isomorphism on $W$, and so $W$ is finite dimensional. Therefore, $S \circ T \in CSS(X, Z')$.

Now, suppose that $T \in CSC(Y, X)$, and that $S \in CB(Y', Y)$. We show that $T \circ S \in CSC(Y', X)$. 

Let $q : X \rightarrow E$ and $p' : Y' \rightarrow E$ be complete quotient maps such that $q \circ T \circ S = p'$.

Let $p = q \circ T : Y \rightarrow E$. We show that it is a complete quotient map. Surjectivity follows from $p \circ S$ being surjective. Also, it is clear that it is a c.b. map. We must prove that there is a constant $C > 0$ such that for every $y \in M_n(Y)$, $n \in \mathbb{N}$,
\[
\|y + \ker p_n\| \leq C \|p_n(y)\|
\]

Given $y \in M_n(Y)$, there is $y' \in M_n(Y')$ such that $p'_n(y') = p_n(y)$. But $p' = p \circ S$. Therefore, since $p'$ is a complete quotient map, there is $K > 0$ such that:
\begin{eqnarray*}
\|y + \ker p_n\| & = & \|S_n(y') + \ker p_n\| \\
    & = & \inf\{\|S_n(y') + \tilde{y}\| : p_n(\tilde{y}) = 0\} \\
    & \leq & \inf\{\|S_n(y') + S_n(y'')\| : p'_n(y'') = 0\} \\
    & \leq & \|S\|_{cb} \inf\{\|y' + y''\| : p'_n(y'') = 0\} \\
    & \leq & K\|S\|_{cb} \|p'_n(y')\| \\
    & = & K \|S\|_{cb} \|p_n(y)\|
\end{eqnarray*}

So $p$ is a complete quotient, and $q \circ T = p$. Since $T$ is c.s.c., $E$ is finite dimensional, and $T \circ S \in CSC(Y', X)$.

Finally, let $T \in CSC(Y, X)$, and $S \in CB(X, X')$. We will prove that $S \circ T \in CSC(Y, X')$.

Let $q' : X' \rightarrow E$ and $p : Y \rightarrow E$ be complete quotient maps such that $q' \circ S \circ T = p$. Let $q = q' \circ S : X \rightarrow E$. We prove that $q$ is a complete quotient map.

It is surjective, since $p = q \circ T$. We must show that there is $C > 0$ such that for all $x \in M_n(X)$, $n \in \mathbb{N}$,
\[
\|x + \ker q_n\| \leq C \|q_n(x)\|
\]

Since $q \circ T = p$, and $p$ is surjective, there is $y \in M_n(Y)$ such that $q_n (T_n(y)) = p_n(y) = q_n(x)$. Since $p$ is a complete quotient, there is $K > 0$ such that
\begin{eqnarray*}
\|x + \ker q_n\| & = & \|T_n(y) + \ker q_n\| \\
& = & \inf\{\|T_n(y) + \tilde{x}\| : q_n(\tilde{x}) = 0\} \\
    & \leq & \inf\{\|T_n(y) + T_n(y')\| : q_n(T_n(y')) = 0\} \\
    & \leq & \|T\|_{cb} \inf\{\|y + y'\| : p_n(y') = 0\} \\
    & \leq & K \|T\|_{cb} \|p_n(y)\| \\
    & = & K \|T\|_{cb} \|q_n(x)\|
\end{eqnarray*}

So $q$ is a complete quotient, and $q \circ T = p$. Since $T \in CSC(Y, X)$, $E$ is finite dimensional, and $S \circ T \in CSC(Y, X')$.
\end{proof}

In particular, complete singularity and complete cosingularity are preserved by the different equivalences of complete twisted sums.

\begin{props}\label{prop:2.15} Let $X$ and $Y$ be operator spaces, and $T \in CB(X, Y)$.
\begin{enumerate}
\item If $T^* \in CSC(Y^*, X^*)$, then $T \in CSS(X, Y)$.
\item If $T^* \in CSS(Y^*, X^*)$, then $T \in CSC(X, Y)$.
\end{enumerate}
\end{props}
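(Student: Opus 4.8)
The plan is to prove both implications by contraposition, using the standard operator space duality dictionary between completely isomorphic embeddings and complete quotient maps: if $i : A \to B$ is a complete isomorphism onto its image, then its adjoint $i^* : B^* \to A^*$ is a complete surjection, and if $q : A \to B$ is a complete surjection, then $q^* : B^* \to A^*$ is a complete isomorphism onto its image. These are the operator space analogues of the classical Banach space facts and follow from the completely isometric identifications $S^* = A^*/S^\perp$ and $(A/S)^* = S^\perp$ for a closed subspace $S \subseteq A$ (see \cite{Effros01}, \cite{Pisier04}); the completely isomorphic (non-isometric) versions are obtained by factoring through the image and tracking the c.b.\ norms of the inverses. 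I would also use freely that $\|T^*\|_{cb} = \|T\|_{cb}$, that the adjoint of a complete isomorphism is again one, and that the dual of an infinite dimensional space is infinite dimensional.

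For part (1), I would assume $T \notin CSS(X,Y)$ and produce a violation of $T^* \in CSC(Y^*, X^*)$. By hypothesis there is an infinite dimensional closed subspace $W \subseteq X$ on which $T$ restricts to a complete isomorphism onto its (necessarily closed) image $Z = T(W) \subseteq Y$. Writing $\iota_W : W \hookrightarrow X$ and $\iota_Z : Z \hookrightarrow Y$ for the inclusions, the identity $T \circ \iota_W = \iota_Z \circ (T|_W)$ dualizes to $\iota_W^* \circ T^* = (T|_W)^* \circ \iota_Z^* : Y^* \to W^*$. Now $\iota_Z^* : Y^* \to Z^*$ is a complete surjection (dual of the embedding $\iota_Z$) and $(T|_W)^* : Z^* \to W^*$ is a complete isomorphism (dual of the complete isomorphism $T|_W : W \to Z$), so their composite $\iota_W^* \circ T^*$ is a complete surjection onto $W^*$. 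Since $\iota_W^* : X^* \to W^*$ is itself a complete surjection, taking $q = \iota_W^*$ and $E = W^*$ exhibits both $q$ and $q \circ T^*$ as complete quotient maps onto the infinite dimensional space $W^*$; hence $T^*$ is not c.s.c.

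For part (2), I would assume $T \notin CSC(X,Y)$ and violate $T^* \in CSS(Y^*, X^*)$. By hypothesis there is a complete quotient map $q : Y \to E$ with $E$ infinite dimensional and with $q \circ T : X \to E$ also a complete quotient map. Dualizing, $q^* : E^* \to Y^*$ is a complete isomorphism onto its closed, infinite dimensional image $V := q^*(E^*)$, and $(q \circ T)^* = T^* \circ q^* : E^* \to X^*$ is a complete isomorphism onto its image. Writing $v = q^*(e)$ for $v \in V$, one has $T^*(v) = (T^* \circ q^*)(e)$, so that $T^*|_V = (T^* \circ q^*) \circ (q^*)^{-1}$ as a map on $V$; being a composite of complete isomorphisms onto images, $T^*|_V$ is a complete isomorphism onto its image. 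Thus $T^*$ restricts to a complete isomorphism on the infinite dimensional subspace $V$, so $T^*$ is not c.s.s.

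The diagram-chasing and the bookkeeping of c.b.\ norms are routine; I expect the only point requiring genuine care to be the duality dictionary itself, namely verifying (in the completely isomorphic rather than completely isometric setting) that adjoints interchange complete embeddings and complete surjections, and that the relevant images stay closed and infinite dimensional after dualizing. Once that dictionary is in place, both implications follow immediately from the two displayed factorizations.
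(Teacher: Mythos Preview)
Your proposal is correct and follows essentially the same approach as the paper: both parts are proved by contraposition using the duality that adjoints exchange complete isomorphic embeddings and complete quotient maps. The only cosmetic difference is that in part (1) you factor $T\circ\iota_W$ through $Z=T(W)$ as $\iota_Z\circ(T|_W)$ before dualizing, whereas the paper simply calls $j=T\circ\iota_W$ a complete isomorphic injection and dualizes directly; the resulting witnesses for failure of c.s.c.\ are the same.
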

\begin{proof}
We will use the fact that $i$ is a complete isomorphism onto its image if and only if $i^*$ is a complete quotient map, and that $q$ is a complete quotient map if and only if $q^*$ is a complete isomorphism onto its image \cite{Pisier04}.

(1) Suppose that $W$ is a closed subspace of $X$ such that $T|_W$ is a complete isomorphism. Let $i : W \rightarrow X$ be the inclusion. Then we have that $T \circ i = j$ is a complete isomorphic injection.

Taking adjoints, we have $i^* \circ T^* = j^*$, where $i^* : X^* \rightarrow W^*$ and $j^* : Y^* \rightarrow W^*$ are complete quotient maps. Since $T^* \in CSC(Y^*, X^*)$, $W^*$ is finite dimensional, and so is $W$. Therefore, $T \in CSS(X, Y)$.

(2) Let $q : Y \rightarrow E$ and $p : X \rightarrow E$ be complete quotient maps such that $q \circ T = p$.

Taking adjoints, we have $T^* \circ q^* = p^*$, where $q^* : E^* \rightarrow Y^*$ and $p^* : E^* \rightarrow X^*$ are complete isomorphic injections, that is, $T^*|_{q^*(E^*)}$ is a complete isomorphism, and since $q^*(E^*)$ is closed in $Y^*$ and $T^* \in CSS(Y^*, X^*)$, $q^*(E^*)$ is finite dimensional, and so is $E$. Therefore, $T \in CSC(X, Y)$.
\end{proof}

\subsection{Pullback}\label{sec:2.4}
We now turn our attention to the question of how to obtain complete twisted sums. To this end, we will study the pullback and pushout sequences, adapted from the Banach space case, as presented in \cite{Castillo02}. See also \cite{Paulsen1998} for a presentation in the more general context of modules over operator algebras, again under the hypothesis of splitting.

Let $X$, $Y$ and $Z$ be Banach spaces, and let $A : X \rightarrow Z$ and $V : Y \rightarrow Z$ be operators. Then there is a space $\Xi$ with operators $\pi_X$ and $\pi_Y$ making the following diagram commute:
\[
\xymatrix{
\Xi \ar[d]^{\pi_X} \ar[r]^{\pi_Y} &Y\ar[d]^B\\
X \ar[r]^A &Z}
\]

By definition, $\Xi = \{(x, y) \in X \oplus Y : Ax = By\}$, with the norm induced from $X \oplus Y$, and $\pi_X$ and $\pi_Y$ are the natural projections. If $A$ and $B$ are bounded, we have that $\Xi$ is a closed subspace of $X \oplus Y$. The space $\Xi$ is called the \textit{pullback} of $\{A, B\}$.

If $X$, $Y$ and $Z$ are operator spaces, we have that $\Xi$ is an operator space, and $\|\pi_X\|_{cb} = \|\pi_Y\|_{cb} = 1$.

Suppose we have a diagram of Banach spaces and operators
\[
\xymatrix{
0 \ar[r] & Y \ar[r]^i & X \ar[r]^q & Z \ar[r] &0\\
&&& V \ar[u]^Q}
\]

\noindent where the first line is exact, $Q$ is surjective, and the spaces have an operator space structure. We can then consider the pullback $\{q, Q\}$, and we obtain a commutative diagram \cite{Sanchez02}:
\[
\xymatrix{
& 0 & 0 & 0\\
0 \ar[r] & Y \ar@{=}[d]\ar[r]^i\ar[u] & X \ar[r]^q\ar[u] & Z \ar[r]\ar[u] & 0\\
0 \ar[r] & Y \ar[r]^j & \Xi \ar[r]^{\pi_V}\ar[u]^{\pi_X} & V \ar[u]^Q\ar[r] & 0 \\
0 \ar[r] & 0 \ar[r]\ar[u] & K \ar[r]\ar[u]^k & \ker(Q) \ar[u]\ar[r] & 0\\
& 0 \ar[u] & 0 \ar[u] & 0 \ar[u]
}
\]

\noindent where $K = \ker(\pi_X)$ is given the operator space structure induced by $\Xi$.

\begin{props}\label{prop:2.16} Suppose we have a diagram induced by pullback like above.
\begin{enumerate}
\item[a)] The third line is an extension sequence, with $K$ completely isometric to $\ker(Q)$.
\item[b)] If $Q$ is c.b. and the first line is an extension sequence, then the second line is an extension sequence.
\item[c)] If $q$ is c.b. and the third column is an extension sequence, then the second column is an extension sequence.
\end{enumerate}
\end{props}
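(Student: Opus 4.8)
The unifying observation is that $\Xi$ carries the operator space structure inherited from the $\infty$-direct sum $X \oplus V$, so that for every $(x,v) \in M_n(\Xi)$ one has $\|(x,v)\|_{M_n(\Xi)} = \max\{\|x\|_{M_n(X)}, \|v\|_{M_n(V)}\}$. The plan is to reduce every norm estimate on $\Xi$, on the subspace $K$, and on the relevant quotients to separate estimates in $M_n(X)$ and $M_n(V)$. The elementary fact I will use repeatedly is that for a fixed scalar $a \geq 0$ and a nonnegative function $f$,
\[
\inf_{v'} \max\{a, f(v')\} = \max\Big\{a, \inf_{v'} f(v')\Big\},
\]
which lets a quotient norm on $\Xi$ (an infimum over one coordinate of a max) collapse to a max involving a quotient norm in a single factor; this is exactly where the hypotheses on $q$ and $Q$ will enter. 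Part a) is then immediate: I would identify $K = \ker(\pi_X) = \{(0,v) : v \in \ker(Q)\}$, observe that $\|(0,v)\|_{M_n(\Xi)} = \|v\|_{M_n(V)}$ and that $\ker(Q)$ carries the structure induced from $V$, so $(0,v) \mapsto v$ is a complete isometry. Hence the bottom line $0 \to 0 \to K \to \ker(Q) \to 0$ is an extension sequence with $K$ completely isometric to $\ker(Q)$.

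For part b), exactness of $0 \to Y \xrightarrow{j} \Xi \xrightarrow{\pi_V} V \to 0$ follows from $\ker(\pi_V) = \{(x,0) : x \in \ker q\} = j(Y)$, using exactness of the top row. The map $j(y) = (i(y),0)$ is a complete isomorphism onto its image, being the composition of $i$ with the complete isometry $X \hookrightarrow X \oplus V$ onto the first coordinate. The substantive step is that $\pi_V$ is a complete surjection. Surjectivity at each matrix level is clear since $q_n$ is onto, so for $v \in M_n(V)$ there is $x$ with $q_n(x) = Q_n(v)$. The preimage of $v$ is a single class modulo $M_n(\ker \pi_V)$, whose quotient norm is, by the inf--max identity,
\[
\|(x,v) + M_n(\ker \pi_V)\| = \max\{\|x + \ker(q_n)\|_{M_n(X/\ker q)}, \|v\|_{M_n(V)}\}.
\]
Since the top row is an extension sequence, $\|x + \ker(q_n)\| \leq C\|q_n(x)\| = C\|Q_n(v)\|$, and since $Q$ is completely bounded, $\|Q_n(v)\| \leq \|Q\|_{cb}\|v\|$; together these bound the quotient norm by a fixed multiple of $\|v\|$, so the induced map $\Xi/\ker(\pi_V) \to V$ is a complete isomorphism.

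Part c) is entirely parallel, with the two factors exchanged. Here $k : K \to \Xi$ is the inclusion of a subspace and is therefore automatically a complete isomorphism onto its image, while exactness is immediate from $K = \ker(\pi_X)$. For $\pi_X : \Xi \to X$ I would again compute, for $(x,v) \in M_n(\Xi)$,
\[
\|(x,v) + M_n(K)\| = \max\{\|x\|_{M_n(X)}, \|v + \ker(Q_n)\|_{M_n(V/\ker Q)}\},
\]
the class being well defined because any two lifts of $x$ differ by an element of $K$. Using that the third column is an extension sequence, so $\|v + \ker(Q_n)\| \leq C\|Q_n(v)\| = C\|q_n(x)\|$, together with $q$ completely bounded, so $\|q_n(x)\| \leq \|q\|_{cb}\|x\|$, bounds the expression by a constant times $\|x\|$ and yields the complete isomorphism $\Xi/K \to X$. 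Surjectivity of $\pi_X$ follows from that of $Q$.

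The main obstacle, and essentially the only nontrivial point, is the quotient-norm computation: one must confirm that the infimum defining the quotient norm on $\Xi$ genuinely decouples into a max of an infimum in a single coordinate (via the inf--max identity) and that the resulting representative class is well defined. Once this is in place, the two-sided estimate comes for free, with the extension-sequence hypothesis supplying the lower-bound side (through the complete isomorphism of $X/\ker q$ with $Z$, resp.\ $V/\ker Q$ with $Z$) and the completely bounded hypothesis on the opposite map supplying the upper-bound side. I expect no difficulty with exactness or with the left-hand inclusions, which follow directly from the pullback description and the $\infty$-sum structure.
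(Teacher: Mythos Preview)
Your proposal is correct and follows essentially the same approach as the paper: identify $K$ with $0\oplus\ker(Q)$, check that $j$ (resp.\ $k$) is a complete isomorphism onto its image, and then estimate the quotient norm on $\Xi/\ker(\pi_V)$ (resp.\ $\Xi/K$) using that the top row (resp.\ right column) is an extension sequence together with complete boundedness of $Q$ (resp.\ $q$). The only difference is cosmetic: the paper bounds the quotient norm via $\max\{a,b\}\leq a+b$, obtaining the constant $1+C\|Q\|_{cb}$, whereas your inf--max identity yields the exact formula $\|(x,v)+M_n(j(Y))\|=\max\{\|x+\ker q_n\|,\|v\|\}$ and the slightly sharper constant $\max\{1,C\|Q\|_{cb}\}$.
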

\begin{proof}
$a)$ We have $K = \ker(\pi_X) = \{(0, v) \in X \oplus V : Q(v) = 0\} = 0 \oplus \ker(Q)$.

$b)$ First, we show that $j$ is a complete isomorphic injection. It is defined by $j(y) = (i(y), 0)$, $y \in Y$. We have $j(Y) = \{(i(y), 0) \in X \oplus V : y \in Y\} = i(Y) \oplus 0$. Since $i$ is a complete isomorphism onto its image, it follows that the same is true for $j$.

We have to prove that the operator induced by $\pi_V$ is a complete isomorphism. Let us call it $\tilde{\pi_V}$.

Since $\|\pi_V\|_{cb} = 1$, we have $\|\tilde{\pi_V}\|_{cb} \leq 1$. 

Now let $(u, v) \in M_n(\Xi)$. We have $(u, v) + M_n(j(Y)) = \{(u + i_n(y), v) \in M_n(\Xi) : y \in M_n(Y)\}$. 

Take $C > 0$ such that $\|x + i_n(M_n(Y))\| \leq C\|q_n(x)\|$ for every $n$ and for every $x \in M_n(X)$. Then:
\begin{eqnarray*}
\|(u, v) + M_n(j(Y))\| & = & \inf\{\|(u + i_n(y), v)\| : y \in M_n(Y)\} \\
                    & \leq & \inf\{\|u + i_n(y)\| : y \in M_n(Y)\} + \|v\| \\
                    & \leq & C\|q_n(u)\| + \|v\| \\
                    & = & C\|Q_n(v)\| + \|v\| \\
                    & \leq & (1 + C\|Q\|_{cb})\|v\| \\
                    & = & (1 + C\|Q\|_{cb})\|(\tilde{\pi_V})_n ((u, v) + M_n(j(Y)))\|
\end{eqnarray*}

Affirmation $c)$ is proved like $b)$.
\end{proof}

\subsection{Pushout}\label{sec:2.4}
Let $Y$, $X$ and $K$ be Banach spaces, and let $A : Y \rightarrow X$ and $B : Y \rightarrow K$ be operators. There is a space $\mathcal{PO}$ and two operators $j_K$ and $j_X$ making the following diagram commute:
\[
\xymatrix{
Y \ar[d]^{B} \ar[r]^{A} &X\ar[d]^{j_X}\\
K \ar[r]^{j_K} &\mathcal{PO}}
\]

Let $D = \{(Ay, -By) \in X \oplus K : y \in Y\}$, and $\Delta = \overline{D}$. Then $\mathcal{PO} = (X\oplus K)/\Delta$, $j_K(k) = (0, k) + \Delta$, $j_X(x) = (x, 0) + \Delta$. The space $\mathcal{PO}$ is called the \textit{pushout} of $\{A, B\}$.

If $X$, $Y$ and $K$ are operator spaces, so is $\mathcal{PO}$, and $j_K$ and $j_X$ are completely bounded (take $y = 0$).

Note that if $A$ or $B$ is an isomorphism, then $D = \Delta$. 

\begin{props}\label{prop:2.17}
Suppose we have a diagram
\[
\xymatrix{
0 \ar[r] & Y \ar[r]^{i}\ar[d]^T & X \ar[r]^q & Z \ar[r] & 0 \\
& K}
\]

\noindent where $Y$, $X$ and $Z$ are operator spaces, the line is an extension sequence and $T$ is c.b. Then the pushout of $\{i, T\}$ induces a commutative diagram
\[
\xymatrix{
0 \ar[r] & Y \ar[r]^{i}\ar[d]^T & X \ar[d]^{\tilde{T}}\ar[r]^q & Z \ar@{=}[d]\ar[r] & 0 \\
0 \ar[r] & K \ar[r]^j & \mathcal{PO} \ar[r]^p & Z \ar[r] & 0}
\]

\noindent where $p$ is defined by $p((x, k) + \Delta) = q(x)$, $j = j_K$, $\tilde{T} = j_X$, and the second line is an extension sequence.
\end{props}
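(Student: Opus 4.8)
The plan is to reduce the entire statement to the matrix levels $M_n$ and to the corresponding Banach-space pushouts, while checking that all constants can be taken independent of $n$. The linchpin is the observation that, for each $n$, the operator space $M_n(\mathcal{PO})$ is \emph{isometrically} the Banach-space pushout of the pair $\{i_n, T_n\}$. Indeed, $\mathcal{PO} = (X \oplus K)/\Delta$ carries the quotient operator space structure, so $M_n(\mathcal{PO}) = (M_n(X) \oplus M_n(K))/M_n(\Delta)$; and since convergence in $M_n$ is entrywise by \eqref{eq:1.2}, one checks that $M_n(\Delta) = \overline{D_n}$, where $D_n = \{(i_n(y), -T_n(y)) : y \in M_n(Y)\}$. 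Thus the level-$n$ bottom row $0 \to M_n(K) \to M_n(\mathcal{PO}) \to M_n(Z) \to 0$, with maps $j_n$ and $p_n$, is exactly the classical pushout sequence attached to the row $0 \to M_n(Y) \to M_n(X) \to M_n(Z) \to 0$, as in \cite{Castillo02}.

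Before the estimates I would record the formal points. The map $p$ is well defined because $q \circ \pi_X$ vanishes on $D$ by exactness of the top row, hence on $\Delta = \overline{D}$, so $p((x,k)+\Delta) := q(x)$ is unambiguous; commutativity is then immediate, since $(iy,0)-(0,Ty) = (iy,-Ty) \in D$ gives $\tilde{T}i = jT$, while $p\tilde{T} = q$ and $pj = 0$ are clear. Moreover the top row is exact at every matrix level: because $i$ is a complete isomorphism onto its image, each $i_n$ is an isomorphic injection, and because $q$ is a complete surjection, each $q_n$ is onto with $\ker q_n = M_n(\ker q) = \Ima i_n$.

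It then remains only to verify the three operator-space conditions for the bottom row with constants independent of $n$. For $j = j_K$ one computes the quotient norm directly as
\[
\|j_n(k)\| = \inf_{y \in M_n(Y)} \max\{\|i_n(y)\|, \|k + T_n(y)\|\}.
\]
The bound $\|j\|_{cb} \le 1$ is clear (take $y=0$). For the reverse bound, choose $y$ almost attaining the infimum; the complete-isomorphism constant $c_0$ of $i$ gives $\|y\| \le c_0^{-1}\|i_n(y)\|$, whence $\|k\| \le \|k+T_n(y)\| + \|T\|_{cb}\|y\| \le (1 + \|T\|_{cb}c_0^{-1})\|j_n(k)\|$, a bound uniform in $n$; this also gives injectivity of $j_n$. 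For $p$, the identity $p_n(\xi) = q_n(x)$ for any representative $(x,k)$ yields $\|p\|_{cb} \le \|q\|_{cb}$; conversely, given $z \in M_n(Z)$, the complete surjectivity of $q$ produces $x \in M_n(X)$ with $q_n(x) = z$ and $\|x\| \le C_q\|z\|$ for a uniform $C_q$, so $\xi = (x,0)+M_n(\Delta)$ satisfies $p_n(\xi) = z$ and $\|\xi + \ker p_n\| \le \|x\| \le C_q\|z\|$; hence the induced map $\mathcal{PO}/\ker p \to Z$ is a complete isomorphism and $p$ is a complete surjection. Finally, exactness of the bottom row at level $n$ follows from $\ker p_n = \{(x,k)+M_n(\Delta) : x \in \Ima i_n\}$ together with the splitting $(i_n(y), k) = (i_n(y), -T_n(y)) + (0, k+T_n(y))$, which shows $\ker p_n = \Ima j_n$.

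The main obstacle is precisely the \emph{uniformity in $n$} of the constants above: a bare Banach-space pushout would only guarantee that each matrix row is exact, whereas the passage to an \emph{extension sequence} of operator spaces requires that the injection and quotient constants not degenerate as $n$ grows. This uniformity is exactly what the hypotheses supply, via $c_0$ (complete isomorphism of $i$), $\|T\|_{cb}$, and the complete surjectivity of $q$. The identification of $M_n(\mathcal{PO})$ with the level-$n$ pushout is the technical step that renders all three verifications entrywise and reduces them to the controlled estimates above; the only delicate point is the interchange of the closure $\Delta = \overline{D}$ with $M_n$, which is handled through \eqref{eq:1.2}.
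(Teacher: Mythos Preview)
Your proof is correct and follows essentially the same route as the paper: you verify directly that $j$ is a complete isomorphic embedding and $p$ a complete surjection by producing uniform-in-$n$ estimates from the constants $\|T\|_{cb}$, the complete-isomorphism constant of $i$, and the complete-surjection constant of $q$, exactly as in the paper's proof. The only cosmetic differences are your explicit framing via the identification $M_n(\mathcal{PO}) \cong$ pushout of $\{i_n,T_n\}$ (the paper uses this implicitly), a harmless sign slip ($k+T_n(y)$ versus $k-T_n(y)$, absorbed by reparametrizing $y\mapsto -y$), and your lifting argument for the complete surjectivity of $p$ in place of the paper's direct double-quotient computation---these are equivalent.
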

\begin{proof}
One easily checks that $p$ is well defined, the second line is exact and that the diagram commutes.

We begin by showing that $j$ is a complete isomorphism. Take $C_1 > 0$ such that $\|y\| \leq C_1 \|i_n y\|$ for every $n$ and for every $y \in M_n(Y)$. Let $k \in M_n(K)$ and $y \in M_n(Y)$ be arbitrary. Then
\begin{eqnarray*}
\|k\| & \leq & \|k - T_ny\| + \|T_ny\| \\
      & \leq & \|k - T_ny\| + C_1\|T\|_{cb}\|i_n y \| \\
      & \leq & (1 + C_1 \|T\|_{cb})\|(i_n y, k - T_ny)\|
\end{eqnarray*}

Since $y$ was arbitrary, $\|k\| \leq (1 + C_1 \|T\|_{cb}) \|j_n(k)\|$. Since $j$ is c.b., it is a complete isomorphism.

Let us show now that $p$ is c.b. Let $(x, k) + M_n(\Delta) \in M_n(\mathcal{PO})$. We have that
\begin{eqnarray*}
\|p_n((x, k) + M_n(\Delta))\| & = & \|q_n(x)\| \\
    & \leq & \|q\|_{cb}\inf\{\|x + i_n y\| : y \in M_n(Y)\} \\
    & \leq & \|q\|_{cb}\inf\{\|(x + i_n y, k - T_ny)\| : y \in M_n(Y)\} \\
    & = & \|q\|_{cb}\|(x, k) + M_n(\Delta)\|
\end{eqnarray*}

Therefore $\|p\|_{cb} \leq \|q\|_{cb}$. Let $\tilde{p}$ be the operator induced by $p$. We must show that it is a complete isomorphism. We already know that it is c.b.

Let $((x, k) + M_n(\Delta)) + M_n(j(K))$ be an element of $M_n(\mathcal{OP})/M_n(j(K))$. Then its norm is
\begin{eqnarray*}
&& \inf\{\|(x, k') + M_n(\Delta))\| : k' \in M_n(K)\} \\
  & = & \inf\{\|(x + i_n(y), k' - T_n(y))\| : k' \in M_n(K), y \in M_n(Y)\}
\end{eqnarray*}

Taking $k' = T_n(y)$, and using the fact that $q$ is a complete quotient map, there is a constant $C_2$ such that
\begin{eqnarray*}
\|((x, k) + M_n(\Delta)) + M_n(j(K))\| & \leq & \inf\{\|x + i_n(y)\| : y \in M_n(Y)\} \\
 & \leq & C_2\|q_n(x)\| \\
 & = & C_2 \|\tilde{p}_n(((x, k) + M_n(\Delta)) + M_n(j(K)))\|
\end{eqnarray*}

Therefore, $\tilde{p}$ is a complete isomorphism, and the second line is an extension sequence.
\end{proof}

We remark that if the first line induces a complete isometry between $Z$ and $X / i(Y)$, then the second line induces a complete isometry between $Z$ and $\mathcal{PO}/j(K)$. 

For the record, we state and prove:
\begin{props}\label{prop:2.18}
In the situation of \hyperref[prop:2.17]{Proposition 2.17}, if $T$ is a complete isomorphism onto its image, so is $\tilde{T}$.
\end{props}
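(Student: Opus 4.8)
The plan is to exploit that $\tilde T=j_X$ is already known to be completely bounded with $\|\tilde T\|_{cb}\le 1$ (this is part of the pushout construction, obtained by taking $y=0$), so the whole statement reduces to producing a \emph{uniform lower bound}: a constant $C>0$ with $\|x\|\le C\,\|\tilde T_n(x)\|$ for every $n$ and every $x\in M_n(X)$. Such a bound forces each $\tilde T_n$ to be an isomorphism onto its image with $\|\tilde T_n^{-1}\|\le C$ uniformly in $n$, and hence makes $\tilde T$ a complete isomorphism onto its (automatically closed) image.

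First I would unwind the norm $\|\tilde T_n(x)\|$. By definition $\tilde T_n(x)=(x,0)+M_n(\Delta)$, and since $M_n(\mathcal{PO})=M_n(X\oplus K)/M_n(\Delta)$ isometrically, this is the quotient norm $\inf\{\|(x,0)-\delta\|:\delta\in M_n(\Delta)\}$. Because $M_n(D)$ is dense in $M_n(\Delta)=M_n(\overline D)$ and $\delta\mapsto\|(x,0)-\delta\|$ is continuous, the infimum over $M_n(\Delta)$ equals the infimum over $M_n(D)=\{(i_n y,-T_n y):y\in M_n(Y)\}$; equivalently, since $i$ is a complete isomorphism onto its image, $D$ is already closed and $\Delta=D$. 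Either way I obtain
\[
\|\tilde T_n(x)\|=\inf_{y\in M_n(Y)}\max\{\|x-i_n y\|,\ \|T_n y\|\}.
\]

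Then the estimate is a one-line triangle inequality. Fix $y\in M_n(Y)$. Since $T$ is a complete isomorphism onto its image there is $c>0$, independent of $n$, with $\|y\|\le c\,\|T_n y\|$, and since $i$ is completely bounded $\|i_n y\|\le\|i\|_{cb}\|y\|$. Hence
\[
\|x\|\le\|x-i_n y\|+\|i_n y\|\le\|x-i_n y\|+c\,\|i\|_{cb}\,\|T_n y\|\le(1+c\,\|i\|_{cb})\max\{\|x-i_n y\|,\ \|T_n y\|\}.
\]
Taking the infimum over $y$ yields $\|x\|\le(1+c\,\|i\|_{cb})\,\|\tilde T_n(x)\|$, the sought uniform lower bound with $C=1+c\,\|i\|_{cb}$.

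The only delicate point, and the one I would treat with care, is the reduction of the quotient norm from the closure $M_n(\Delta)$ to the dense subspace $M_n(D)$ parametrized by $M_n(Y)$; once that identification is in place the argument is entirely elementary. I would emphasize that all constants produced are independent of $n$, which is exactly what a complete isomorphism onto the image requires, and that boundedness below together with completeness of $X$ also gives closedness of the image for free.
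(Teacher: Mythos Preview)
Your proof is correct and is essentially the same as the paper's: both note that $\tilde T$ is completely bounded from the pushout construction, then use the triangle inequality $\|x\|\le\|x-i_n y\|+\|i_n y\|$ together with $\|i_n y\|\le \|i\|_{cb}\,c\,\|T_n y\|$ (from $T$ being a complete isomorphism onto its image) to obtain a uniform lower bound, and take the infimum over $y$. The paper packages the constant as a single $C$ and writes $\|x+i_n y\|$ instead of $\|x-i_n y\|$, but up to the obvious change of variable the arguments coincide; your ``delicate point'' about $\Delta=D$ is already remarked in the paper just before Proposition~2.17.
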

\begin{proof}
We already know that $\tilde{T}$ is completely bounded. Let $x \in M_n(X)$. We have that $\tilde{T}_n(x) = (x, 0) + M_n(\Delta)$. Since $i$ is completely bounded and $T$ is a complete isomorphism, there is a constant $C > 0$ such that, for every $y \in M_n(Y)$,
\begin{equation*}
\|x\|  \leq  \|x + i_n y\| + C\|-T_n y\| \leq (1 + C)\|(x + i_n y, -T_n y)\|
\end{equation*}

By taking the infimum over all $y \in M_n(Y)$, we have that $\|x\| \leq (1 + C)\|\tilde{T}_n(x)\|$.
\end{proof}

\section{Complete twisted sums induced by complex interpolation}\label{sec:3}
\subsection{Complex interpolation and extension sequences}\label{sec:3.1} We recall Calder\'{o}n's complex method of interpolation. A general reference for the topic is \cite{Bergh01}.

Let $\overline{X} = (X_0, X_1)$ be a \textit{compatible couple} of Banach spaces, that is, $X_0$ and $X_1$ are continually and linearly embedded in a Hausdorff topological space $U$. The space $U$ may be replaced by the space
\[
\Sigma(\overline{X}) = \{x_0 + x_1 \in U : x_0 \in X_0, x_1 \in X_1\}
\]

The space $\Sigma(\overline{X})$ is equipped with the complete norm
\[
\|x\| = \inf\{\|x_0\|_{X_0} + \|x_1\|_{X_1} : x = x_0 + x_1, x_0 \in X_0, x_1 \in X_1\}
\]

Let $\mathcal{F} = \mathcal{F}(\overline{X})$ be the space of functions $f$ defined on $\mathbb{S} = \{z \in \mathbb{C} : 0 \leq Re(z) \leq 1\}$ with image in $\Sigma(\overline{X})$ such that:
\begin{enumerate}
\item[(F1)] $f$ is continuous and bounded on $\mathbb{S}$ and analytic in the interior of $\mathbb{S}$;
\item[(F2)] $f(it) \in X_0$, $f(1 + it) \in X_1$, for every $t \in \mathbb{R}$;
\item[(F3)] $t \mapsto \|f(it)\|_{X_0}$ and $t \mapsto \|f(1 + it)\|_{X_1}$ are continuous bounded functions from $\mathbb{R}$ to $\mathbb{R}$.
\end{enumerate}

For $f \in \mathcal{F}$, let
\[
\|f\| = \max\{\sup\limits_{t \in \mathbb{R}} \|f(it)\|_{X_0}, \sup\limits_{t \in \mathbb{R}} \|f(1 + it)\|_{X_1}\}
\]

With this norm, $\mathcal{F}$ is a Banach space, and for $\theta \in [0, 1]$, the evaluation $\delta_{\theta} : \mathcal{F} \rightarrow \Sigma(\overline{X})$ at $\theta$ is continuous.

Then the space $X_{\theta} = (X_0, X_1)_{\theta} = \mathcal{F}/\ker(\delta_{\theta})$ is a Banach space called the \textit{interpolation space of} $(X_0, X_1)$ \textit{at} $\theta$. That is
\[
X_{\theta} = \{f(\theta) : f \in \mathcal{F}\}
\]

\noindent normed by
\[
\|x\| = \inf\{\|f\| : f \in \mathcal{F}, f(\theta) = x\}
\]

Complex interpolation is also defined for operator spaces \cite{Pisier03}.
If $(X_0, X_1)$ is a compatible pair of Banach spaces and $X_0$ and $X_1$ are operator spaces, for each $n$ we may see $(M_n(X_0), M_n(X_1))$ as a compatible couple by the inclusions $M_n(X_0), M_n(X_1) \subset \Sigma(\overline{X})^n$, because of \eqref{eq:1.2}. Then the operator space structure of $X_{\theta}$ is defined by the isometric identifications
\[
M_n(X_{\theta}) = (M_n(X_0), M_n(X_1))_{\theta}
\]

Another way to see this is by considering on $\mathcal{F}$ the operator space structure
\[
M_n(\mathcal{F}(\overline{X})) = \mathcal{F}(M_n(X_0), M_n(X_1))
\]

This operator space structure is induced by the inclusion (see \cite{Blecher01})
\[
\mathcal{F} \subset L_{\infty}(\mathbb{R}, X_0) \oplus_{\infty} L_{\infty}(\mathbb{R}, X_1)
\]

Then $X_{\theta} = \mathcal{F}/\ker(\delta_{\theta})$ completely isometrically. This interpolation space satisfies properties analogous to the classic interpolation of Banach spaces. For example, we have the Riesz-Thorin Theorem: if $T$ is an operator completely bounded on $X_0$ and on $X_1$, then $T$ is completely bounded on $X_{\theta}$ (see \cite{Pisier03}, Proposition 2.1).

The following lemma is well known in the Banach space case, see, for example, \cite{Castillo01}.

\begin{lemma}\label{lem:3.1}
Let $(X_0, X_1)$ be a compatible couple of operator spaces. For $\theta \in (0, 1)$ the evaluation of the derivative at $\theta$, $\delta_{\theta}' : \ker(\delta_{\theta}) \rightarrow X_{\theta}$, is onto and completely bounded.
\end{lemma}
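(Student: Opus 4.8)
The plan is to handle surjectivity and complete boundedness simultaneously by means of a single scalar multiplier built from a conformal map of the strip, whose only constant depends on $\theta$ and not on the couple; this couple-independence is what will make the passage to the matrix levels automatic. Concretely, I would fix a conformal equivalence $\phi$ from the interior of $\mathbb{S}$ onto the open unit disc with $\phi(\theta) = 0$. It extends continuously to the two boundary lines, which it maps into the unit circle, so that $|\phi| \le 1$ on $\mathbb{S}$, $|\phi(it)| = |\phi(1 + it)| = 1$ for all $t \in \mathbb{R}$, and $\phi'(\theta) \ne 0$, with $\theta$ the unique zero of $\phi$ in the interior of $\mathbb{S}$. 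Put $N = \phi/\phi'(\theta)$, a bounded scalar analytic function with $N(\theta) = 0$ and $N'(\theta) = 1$.

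To see that $\delta_{\theta}'$ is onto, take $x \in X_{\theta}$ and $f \in \mathcal{F}$ with $f(\theta) = x$. Since $N$ is a bounded scalar multiplier, $g := Nf$ still satisfies (F1)--(F3), so $g \in \mathcal{F}$, and $g(\theta) = N(\theta)x = 0$ places $g$ in $\ker(\delta_{\theta})$. The Leibniz rule then gives $g'(\theta) = N'(\theta)f(\theta) + N(\theta)f'(\theta) = x$, hence $\delta_{\theta}'(g) = x$. For the norm estimate I would instead divide: given $f \in \ker(\delta_{\theta})$, the function $h := f/N = \phi'(\theta)\,f/\phi$ has a removable singularity at $\theta$ with $h(\theta) = f'(\theta)$, since $f$ and $\phi$ each vanish to first order there. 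Because $|\phi|$ is bounded below off any neighbourhood of $\theta$ while $f/\phi$ stays bounded near $\theta$, the function $h$ is continuous and bounded on $\mathbb{S}$, and the boundary identity $|\phi(it)| = |\phi(1 + it)| = 1$ gives $\|h(it)\|_{X_0} = |\phi'(\theta)|\,\|f(it)\|_{X_0}$ and likewise on the line $\mathrm{Re}(z) = 1$; thus $h \in \mathcal{F}$ with $\|h\|_{\mathcal{F}} \le |\phi'(\theta)|\,\|f\|_{\mathcal{F}}$. As $h(\theta) = f'(\theta)$, this shows both that $f'(\theta) \in X_{\theta}$ and that $\|\delta_{\theta}'(f)\|_{X_{\theta}} \le |\phi'(\theta)|\,\|f\|_{\mathcal{F}}$, the constant depending only on $\theta$.

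Finally, I would invoke the identifications $M_n(\mathcal{F}(\overline{X})) = \mathcal{F}(M_n(X_0), M_n(X_1))$ and $M_n(X_{\theta}) = (M_n(X_0), M_n(X_1))_{\theta}$ recalled above: an element of $M_n(\ker \delta_{\theta})$ is precisely an $F \in \mathcal{F}(M_n(X_0), M_n(X_1))$ with $F(\theta) = 0$, and $(\delta_{\theta}')_n F = F'(\theta)$ is nothing but the derivative-evaluation for the matricial couple. Since the estimate above holds for an arbitrary compatible couple with a constant depending only on $\theta$, applying it to $(M_n(X_0), M_n(X_1))$ yields $\|(\delta_{\theta}')_n\| \le |\phi'(\theta)|$ for every $n$, that is $\|\delta_{\theta}'\|_{cb} \le |\phi'(\theta)|$.

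I expect the one genuinely delicate point to be the verification that $h = f/N$ really belongs to $\mathcal{F}$ --- controlling its behaviour at the two ends of the strip and its continuity up to $\partial \mathbb{S}$, where $\phi$ approaches its two boundary limit values of modulus one --- after which both the surjectivity and the lift to matrix levels, and hence complete boundedness, follow with no further work. The conceptual content of the lemma is thus entirely carried by the couple-independence of $|\phi'(\theta)|$; everything noncommutative comes for free from the matricial description of the interpolation functor.
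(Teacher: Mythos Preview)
Your proof is correct and follows essentially the same route as the paper: both use a conformal map $\phi:\mathbb{S}\to\mathbb{D}$ with $\phi(\theta)=0$, multiply by $\phi$ (up to the normalizing constant) for surjectivity, and factor $f=\phi g$ to obtain the bound $\|\delta_{\theta}'\|_{cb}\le|\phi'(\theta)|$. The only cosmetic difference is that the paper works directly in $M_n(\mathcal{F})$ while you argue at the Banach level and then lift via the couple-independence of the constant; the paper also simply asserts the factorization $f=\varphi g$ with $g\in\mathcal{F}$ rather than spelling out, as you do, why $f/\phi$ satisfies (F1)--(F3).
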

\begin{proof}
Let $\mathbb{D}= \{z \in \mathbb{C} : \left|z\right| \leq 1\}$ and let $\varphi : \mathbb{S} \rightarrow \mathbb{D}$ be a conformal equivalence with $\varphi(\theta) = 0$.

Let $f \in M_n(\mathcal{F})$ with $f(\theta) = 0$. Then $f = \varphi g$ for some $g \in M_n(\mathcal{F})$, $f'(\theta) = \varphi'(\theta) g(\theta)$, and
\begin{eqnarray*}
\|f'(\theta)\| & = & \left|\varphi'(\theta)\right| \|g(\theta)\| \\
   & \leq & \left|\varphi'(\theta)\right| \|g\| \\
   & = & \left|\varphi'(\theta)\right| \|\varphi g\| \\
   & = & \left|\varphi'(\theta)\right| \|f\|
\end{eqnarray*}

Thus $\delta_{\theta}'$ is completely bounded. To see that it is onto, given $x \in X_{\theta}$, take $f \in \mathcal{F}$ such that $f(\theta) = x$ and consider $g = \frac{\varphi f}{\varphi'(\theta)}$.
\end{proof}

So, in the context of operator spaces, we have a diagram
\[
\xymatrix{
0 \ar[r] & \ker(\delta_{\theta}) \ar[r]^{i}\ar[d]^{\delta_{\theta}'} & \mathcal{F} \ar[r]^{\delta_{\theta}} & X_{\theta} \ar[r] & 0 \\
& X_{\theta}}
\]

\noindent where the first line is an extension sequence. Using the pushout \hyperref[prop:2.17]{(Proposition 2.17)}, we get a commutative diagram
\[
\xymatrix{
0 \ar[r] & \ker(\delta_{\theta}) \ar[r]^{i}\ar[d]^{\delta_{\theta}'} & \mathcal{F} \ar[d]\ar[r]^{\delta_{\theta}} & X_{\theta} \ar@{=}[d]\ar[r] & 0 \\
0 \ar[r] & X_{\theta} \ar[r]^i & \mathcal{PO} \ar[r]^q & X_{\theta} \ar[r] & 0}
\]

\noindent where the lines are extension sequences. That is, we get a complete extension of $X_{\theta}$.

As in the Banach space case, we have a simple description of this extension:
\begin{props}\label{prop:3.2}
In the above situation, we have a complete equivalence of complete twisted sums
  \[
  \xymatrix{ 0 \ar[r] & X_{\theta} \ar[r]^i\ar@{=}[d] & \mathcal{PO}
    \ar[r]^q\ar[d]^{T} & X_{\theta}\ar[r] \ar@{=}[d] &0 \\ 0 \ar[r] & X_{\theta}
    \ar[r]^j & dX_{\theta} \ar[r]^p& X_{\theta}\ar[r] &0}
  \]
  
\noindent where
\[
dX_{\theta} = \mathcal{F}/(\ker(\delta_{\theta}) \cap \ker(\delta'_{\theta})) = \{(f(\theta), f'(\theta)) : f \in \mathcal{F}\}
\]

\noindent with the quotient norm, $j(y) = (0, y)$ and $p(x, y) = x$.
\end{props}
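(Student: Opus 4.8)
The plan is to produce the vertical map $T$ explicitly, together with an explicit inverse, and to verify that both are completely bounded and intertwine the two sequences. Once $T$ is shown to be a complete isomorphism the statement follows: the top row is already an extension sequence by the pushout construction of \hyperref[prop:2.17]{Proposition 2.17}, so the bottom row inherits the structure of an extension sequence through $T$, and complete equivalence in the sense of \hyperref[def:2.4]{Definition 2.4} is immediate because the two outer vertical maps are identities.

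First I would define $T : \mathcal{PO} \rightarrow dX_{\theta}$ on representatives by $T((f, y) + \Delta) = (f(\theta), f'(\theta) + y)$. To see that this descends to the quotient I check that it annihilates the generating subspace $D = \{(w, -w'(\theta)) : w \in \ker(\delta_{\theta})\}$: for such $w$ one has $w(\theta) = 0$, so $T$ returns $(0, w'(\theta) - w'(\theta)) = 0$. Since $\Delta = \overline{D}$, this is not quite enough on its own, so I would first record that the underlying map $(f, y) \mapsto (f(\theta), f'(\theta) + y)$ is bounded from $\mathcal{F} \oplus X_{\theta}$ into $dX_{\theta}$ (this uses the section produced below), whence it is continuous and therefore vanishes on all of $\Delta$. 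Commutativity is then a direct substitution: $T(i(y)) = T((0, y) + \Delta) = (0, y) = j(y)$, and $p(T((f, y) + \Delta)) = f(\theta) = q((f, y) + \Delta)$.

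For the inverse I would set $T^{-1}(f(\theta), f'(\theta)) = (f, 0) + \Delta$; well-definedness holds because changing the representative $f$ by an element of $\ker(\delta_{\theta}) \cap \ker(\delta'_{\theta})$ alters $(f, 0)$ by a vector lying in $D$. A short computation then confirms $T T^{-1} = Id$ and $T^{-1} T = Id$ (for the latter one absorbs the $y$ using any preimage of $y$ under $\delta'_{\theta}$). The estimate $\|T^{-1}\|_{cb} \leq 1$ is painless: for any $g \in M_n(\mathcal{F})$ representing the element $(g(\theta), g'(\theta))$ of $M_n(dX_{\theta})$ one has $\|(g, 0) + M_n(\Delta)\| \leq \|(g, 0)\| = \|g\|_{M_n(\mathcal{F})}$, and taking the infimum over all such $g$ returns exactly the quotient norm of $(g(\theta), g'(\theta))$ in $M_n(dX_{\theta})$.

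The main obstacle is the completely bounded estimate for $T$ itself, which is where \hyperref[lem:3.1]{Lemma 3.1} is needed quantitatively. From the proof of that lemma, composing with $\varphi / \varphi'(\theta)$ provides a completely bounded section $s$ of $\delta'_{\theta}$ with $\|s\|_{cb} \leq 1/|\varphi'(\theta)|$, since $|\varphi| \leq 1$ on $\mathbb{S}$. Given $(f, y) + \Delta$ and an arbitrary $w \in M_n(\ker(\delta_{\theta}))$, the vector $(f - w, y + w'(\theta))$ is another representative, and $g := (f - w) + s_n(y + w'(\theta))$ satisfies $g(\theta) = f(\theta)$ and $g'(\theta) = f'(\theta) + y$, so it represents $T_n((f, y) + \Delta)$ in $M_n(dX_{\theta})$. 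Then $\|g\| \leq \|f - w\| + \|s\|_{cb} \|y + w'(\theta)\| \leq (1 + 1/|\varphi'(\theta)|) \|(f - w, y + w'(\theta))\|$, and infimizing over $w$ yields $\|T\|_{cb} \leq 1 + 1/|\varphi'(\theta)|$. With both $T$ and $T^{-1}$ completely bounded and the diagram commuting, $T$ is a complete isomorphism, and the asserted complete equivalence of extension sequences follows.
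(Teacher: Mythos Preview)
Your argument is correct and follows essentially the same route as the paper: you define $T$ and $T^{-1}$ exactly as the paper does, and you obtain the same bounds $\|T^{-1}\|_{cb} \le 1$ and $\|T\|_{cb} \le 1 + 1/|\varphi'(\theta)|$ by the same section trick coming from \hyperref[lem:3.1]{Lemma 3.1}. Two minor remarks: your detour through continuity to handle $\Delta = \overline{D}$ is unnecessary here, since the inclusion $\ker(\delta_{\theta}) \hookrightarrow \mathcal{F}$ is an isometry and hence $D$ is already closed (as noted just before \hyperref[prop:2.17]{Proposition 2.17}); and whereas the paper verifies directly that the bottom row is an extension sequence before invoking the $3$-lemma, you instead transfer this structure through the complete isomorphism $T$, which is a legitimate and slightly more economical organization.
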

\begin{proof}
First, we prove that $dX_{\theta}$ is a complete extension of $X_{\theta}$. 

Let $\varphi : \mathbb{S} \rightarrow \mathbb{D}$ be a conformal equivalence with $\varphi(\theta) = 0$. Notice that $\ker \delta_{\theta} = \varphi \mathcal{F}$ and $\ker (\delta_{\theta}) \cap \ker (\delta'_{\theta}) = \varphi^2 \mathcal{F}$.

Let $y \in M_n(X_{\theta})$, and $f \in M_n(\mathcal{F})$ with $f(\theta) = y$. Let $f_1 = \frac{\varphi f}{\varphi'(\theta)}$. Since $f_1(\theta) = 0$ and $f_1'(\theta) = y$, we have:
\begin{eqnarray*}
\|j_n(y)\| & = & \frac{1}{\left|\varphi'(\theta)\right|}\|((\varphi f)(\theta), (\varphi f)'(\theta))\| \\
 & = & \frac{1}{\left|\varphi'(\theta)\right|} \inf\{\|\varphi f + \varphi^2 g\| : g \in M_n(\mathcal{F})\} \\
 & = & \frac{1}{\left|\varphi'(\theta)\right|} \inf\{\|f + \varphi g\| : g \in M_n(\mathcal{F})\} \\
 & = & \frac{1}{\left|\varphi'(\theta)\right|} \inf\{\|f + h\| : h \in \ker (\delta_{\theta})_n\} \\
 & = & \frac{1}{\left|\varphi'(\theta)\right|} \|y\|
\end{eqnarray*}

Therefore, $j$ is a complete isomorphism onto its image. Now, to see that $p$ is completely bounded, let $f \in M_n(\mathcal{F})$. Then:
\[
\|p_n(f(\theta), f'(\theta))\| = \|f(\theta)\| = \inf\{\|g\| : g(\theta) = f(\theta)\} \leq \|(f(\theta), f'(\theta))\|
\]

Finally, it is a complete quotient map, since, for all $f \in M_n(\mathcal{F})$, we have:
\begin{eqnarray*}
\|\tilde{p}_n((f(\theta), f'(\theta)) + j_n(X_{\theta}))\| & = & \|f(\theta)\| \\
    & = & \inf\{\|f + g\| : g \in \ker (\delta_{\theta})_n\} \\
    & = & \inf\{\|h\| : h(\theta) = f(\theta), h'(\theta) - f'(\theta) \in M_n(X_{\theta})\} \\
    & = & \|((f(\theta), f'(\theta)) + j_n(X_{\theta})\|
\end{eqnarray*}

The operator $T$ is defined by $T((f, x) + \Delta) = (f(\theta), x + f'(\theta))$. One easily checks that it is well-defined and makes the diagram commute. It remains to show that it is a complete isomorphism. By the $3-$lemma \hyperref[prop:2.3]{(Proposition 2.3)}, it is enough to prove that $T$ or $T^{-1}$ is completely bounded, but we show both, since we can get universal constants for the c.b. norms of $T$ and $T^{-1}$.

First, we show that $T$ is completely bounded. Let $(f, x) + M_n(\Delta) \in M_n(\mathcal{PO})$, and take $g \in M_n(\mathcal{F})$ such that $g(\theta) = 0$ and $g'(\theta) = x$. Then:
\[
T_n((f, x) + M_n(\Delta)) = (f(\theta), x + f'(\theta)) = ((f + g)(\theta), (f + g)'(\theta))
\]

Therefore
\[
\|T_n((f, x) + M_n(\Delta))\| = \inf\{\|f + g + \varphi h\| : h \in \ker (\delta_{\theta})_n\}
\]

Recall that
\[
\|(f, x) + M_n(\Delta)\| = \inf\{\|(f + l, x - l'(\theta)\| : l \in \ker (\delta_{\theta})_n\}
\]

Given $l \in \ker(\delta_{\theta})_n$, since $g - l \in \ker(\delta_{\theta})_n$, there is $h_1 \in M_n(\mathcal{F})$ such that $\varphi h_1 = g - l$, and $h_1(\theta) = \frac{x - l'(\theta)}{\varphi'(\theta)}$.

For $\epsilon > 0$, take $h_{l, \epsilon} \in \ker(\delta_{\theta})_n$ such that $\|h_1 + h_{l, \epsilon}\| \leq \|\frac{x - l'(\theta)}{\varphi'(\theta)}\| + \epsilon$. Then:
\begin{eqnarray*}
\|f + g + \varphi h_{l, \epsilon}\| & \leq & \|f + l\| + \|g - l + \varphi h_{l, \epsilon}\| \\
    & = & \|f + l\| + \|h_1 + h_{l, \epsilon}\| \\
    & \leq & (1 + \frac{1}{\left|\varphi'(\theta)\right|})\max\{\|f + l\|, \|x - l'(\theta)\|\} + \epsilon
\end{eqnarray*}

Since $l$ and $\epsilon>0$ were arbitrary, we get
\[
\|T_n((f, x) + M_n(\Delta))\| \leq (1 + \frac{1}{\left|\varphi'(\theta)\right|}) \|(f, x) + M_n(\Delta)\|
\]

\noindent and $T$ is completely bounded.

Finally, since
\[
\|T_n((f, x) + M_n(\Delta))\| = \inf\{\|h\| : h(\theta) = f(\theta), h'(\theta) = x + f'(\theta)\}
\]

\noindent given $h$ in the set above, we have $h - f \in \ker (\delta_{\theta})_n$ and $h'(\theta) - f'(\theta) = x$. If we let $g = h -f$, we have that $(g, -x) \in M_n(\Delta)$, and
\[
\|(f, x) + M_n(\Delta)\| \leq \|(f, x) + (g, -x)\| = \|(h, 0)\| = \|h\|
\]

Since $h$ was arbitrary, we get
\[
\|(f, x) + M_n(\Delta)\| \leq \|T_n((f, x) + M_n(\Delta))\|
\]

\noindent and $T$ is a complete isomorphism.
\end{proof}

Notice that by multiplying elements of $\mathcal{F}$ by $e^{\delta(z - \theta)^2}$, with $\delta < 0$, we may suppose that $\|f(it)\|_{X_0} \rightarrow 0$ and $\|f(1 +it)\|_{X_1} \rightarrow 0$ when $\left|t\right| \rightarrow 0$, without changing $f(\theta)$ and $f'(\theta)$, and the resulting norms on $X_{\theta}$ and $dX_{\theta}$ are the same. This allows us to use results from \cite{Bergh01}.

\subsection{Duality}\label{sec:3.2}
Let $(X_0, X_1)$ be a compatible couple of operator spaces, and let $\Delta(\overline{X}) = X_0 \cap X_1$, normed by
\[
\|x\| = \max\{\|x\|_0, \|x\|_1\}
\]

If $\Delta(\overline{X})$ is dense in both $X_0$ and $X_1$, and $X_0$ or $X_1$ is reflexive, then $(X_0^*, X_1^*)$ is also an interpolation couple and $(X_0^*, X_1^*)_{\theta} = (X_{\theta})^*$ completely isometrically (Theorem 2.2 of \cite{Pisier03} and \cite{Bergh02}). Because of this identification we can drop the parenthesis and write $X_{\theta}^*$. This interpolation scheme induces an extension sequence:
  \[
  \xymatrix{ 0 \ar[r] & X_{\theta}^* \ar[r] & d(X_{\theta}^*)
    \ar[r] & X_{\theta}^*\ar[r]  &0}
  \]

On the other hand, by dualizing the extension sequence induced by the interpolation scheme $(X_0, X_1)$, we get an extension sequence
  \[
  \xymatrix{ 0 \ar[r] & X_{\theta}^* \ar[r] & (dX_{\theta})^*
    \ar[r] & X_{\theta}^*\ar[r]  &0}
  \]
  
In the Banach space setting these are equivalent twisted sums of $X_{\theta}^*$ (see \cite{Cwikel01, Rochberg01})). We shall prove now that the same result holds in the operator space case (notice that we are treating the complete isometric identification between $(X_{\theta})^*$ and $(X_0^*, X_1^*)_{\theta}$ as an identity). We will use this result later to prove complete cosingularity of some extension sequences.

One crucial part of the proof in the Banach space case is the Schwarz-Pick lemma (\cite{Krantz1999handbook}, for example). We will need a noncommutative version of it.
\begin{lemma}(Noncommutative Schwarz-Pick Lemma)\label{lem:3.3}
Let $F : \mathbb{D}^o \rightarrow M_n$ be an analytic function such that $\|F(z)\| \leq 1$ for every $z \in \mathbb{D}^o$. Then
\[
\|F'(z)\| \leq \frac{1}{1 - \left|z\right|^2}
\]

\noindent for every $z \in \mathbb{D}^o$.
\end{lemma}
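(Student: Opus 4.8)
The plan is to reduce the matrix-valued statement to the classical scalar Schwarz--Pick lemma by testing against pairs of unit vectors. Recall that the operator norm on $M_n = B(\ell_2^n)$ satisfies $\|A\| = \sup\{|\langle A\xi, \eta\rangle| : \xi, \eta \in \ell_2^n, \|\xi\| = \|\eta\| = 1\}$, and the same supremum describes $\|F'(z)\|$. So it suffices to bound $|\langle F'(z)\xi, \eta\rangle|$ uniformly over all unit vectors $\xi, \eta$ by $\frac{1}{1 - |z|^2}$.

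First I would fix unit vectors $\xi, \eta$ and define the scalar function $g(w) = \langle F(w)\xi, \eta\rangle$ on $\mathbb{D}^o$. Since $A \mapsto \langle A\xi, \eta\rangle$ is a linear functional of norm at most $1$, composing it with the analytic function $F$ shows that $g$ is analytic and that $|g(w)| \leq \|F(w)\|\,\|\xi\|\,\|\eta\| \leq 1$ for every $w \in \mathbb{D}^o$. Moreover, because differentiation commutes with this (bounded, linear) functional, we have $g'(z) = \langle F'(z)\xi, \eta\rangle$.

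Next I would apply the classical Schwarz--Pick lemma to $g$. If $g$ is constant then $g'(z) = 0$ and the desired bound is trivial; otherwise the maximum modulus principle forces $|g(w)| < 1$ throughout $\mathbb{D}^o$, so $g$ is an analytic self-map of the open unit disk and the scalar Schwarz--Pick inequality applies, giving $|g'(z)| \leq \frac{1 - |g(z)|^2}{1 - |z|^2} \leq \frac{1}{1 - |z|^2}$, where the last step uses $|g(z)| \leq 1$.

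Finally, since $g'(z) = \langle F'(z)\xi, \eta\rangle$ and $\xi, \eta$ were arbitrary unit vectors, taking the supremum over all such pairs recovers $\|F'(z)\|$ and yields $\|F'(z)\| \leq \frac{1}{1 - |z|^2}$. The only delicate point is the passage through the scalar lemma, namely ensuring that $g$ genuinely maps into the \emph{open} disk so that the hypotheses of Schwarz--Pick are met; this is handled by the constant-versus-maximum-modulus dichotomy above. Everything else is the routine observation that scalarizing against unit vectors both recovers the operator norm and commutes with differentiation.
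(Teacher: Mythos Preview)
Your proof is correct and follows essentially the same route as the paper: both scalarize by fixing unit vectors $\xi,\eta\in\ell_2^n$, apply the classical Schwarz--Pick lemma to $w\mapsto\langle F(w)\xi,\eta\rangle$, and then take the supremum over all such pairs to recover the operator norm. You are in fact slightly more careful than the paper in handling the constant-versus-nonconstant dichotomy needed to ensure the scalar function maps into the \emph{open} disk.
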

\begin{proof}
Given $x, y \in \ell_2^n$ of norm $1$, let $H_{x, y} : \mathbb{D}^o \rightarrow \mathbb{C}$ be given by $H_{x, y}(z) = <F(z)x, y>$. Then $H_{x, y}$ is analytic and $\left|H_{x, y}(z)\right| \leq 1$ for all $z \in \mathbb{D}^o$.

By the Schwarz-Pick lemma:
\[
\left|H'_{x, y}(z)\right| \leq \frac{1 - \left|H_{x, y}(0)\right|^2}{1 - \left|z\right|^2} \leq \frac{1}{1 - \left|z\right|^2}
\]

But $H'_{x, y}(z) = <F'(z)x, y>$, and since $x$ and $y$ were arbitrary of norm $1$, we have the result.
\end{proof}

\begin{theorem}\label{teo:3.4}
Suppose that $\Delta(\overline{X})$ is dense in $X_0$ and in $X_1$ and that at least one of the spaces $X_0$ or $X_1$ is reflexive. Then the extension sequences
  \[
  \xymatrix{ 0 \ar[r] & X_{\theta}^* \ar[r] & d(X_{\theta}^*)
    \ar[r] & X_{\theta}^*\ar[r]  &0}
  \]
and
  \[
  \xymatrix{ 0 \ar[r] & X_{\theta}^* \ar[r] & (dX_{\theta})^*
    \ar[r] & X_{\theta}^*\ar[r]  &0}
  \]
\noindent are completely (isomorphically) equivalent.
\end{theorem}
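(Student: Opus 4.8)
The plan is to exhibit an explicit completely bounded isomorphism between $d(X_\theta^*)$ and $(dX_\theta)^*$ that commutes with the inclusions and quotient maps, and then invoke the $3$-lemma (Proposition \ref{prop:2.3}) to conclude that this middle map, together with the identities on the copies of $X_\theta^*$, gives a complete isomorphic equivalence. The first step is to identify both extensions concretely via Proposition \ref{prop:3.2}: the space $d(X_\theta^*)$ consists of pairs $(g(\theta), g'(\theta))$ for $g \in \mathcal{F}(\overline{X^*})$, while $(dX_\theta)^*$ is the dual of the space of pairs $(f(\theta), f'(\theta))$ for $f \in \mathcal{F}(\overline{X})$. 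The natural pairing to try is
\[
\langle (g(\theta), g'(\theta)), (f(\theta), f'(\theta)) \rangle = \langle g(\theta), f'(\theta)\rangle + \langle g'(\theta), f(\theta)\rangle,
\]
and I would check that this descends to a well-defined duality on the quotient spaces (i.e.\ that it vanishes when $f \in \ker\delta_\theta \cap \ker\delta_\theta'$, using integration-by-parts / the Cauchy theorem on $\mathbb{S}$ against a pair of analytic functions).

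The second step is to verify that this pairing implements an honest duality and that the associated map $d(X_\theta^*) \to (dX_\theta)^*$ is completely bounded with completely bounded inverse. Here I would transfer everything to the disk $\mathbb{D}$ via a conformal map $\varphi$ with $\varphi(\theta)=0$, exactly as in Lemma \ref{lem:3.1} and Proposition \ref{prop:3.2}, so that $\ker\delta_\theta = \varphi\mathcal{F}$ and $\ker\delta_\theta \cap \ker\delta_\theta' = \varphi^2\mathcal{F}$. The point of passing to operator-space duals is that $M_n(X_\theta^*) = CB(X_\theta, M_n)$, so the completely bounded norm estimates for the comparison map reduce to norm estimates for analytic $M_n$-valued functions on $\mathbb{D}$. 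This is precisely where the Noncommutative Schwarz-Pick Lemma (Lemma \ref{lem:3.3}) enters: to bound $g'(\theta)$ (the ``derivative part'' of an element of $d(X_\theta^*)$) in terms of the functional it induces, one needs a matrix-valued derivative estimate at the center of the disk, and $\|F'(0)\| \leq 1$ for $\mathbb{D}^o$-valued contractions $F$ is exactly the $z=0$ case of that lemma.

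The concrete execution is to fix $x \in M_n(dX_\theta)$ and estimate $\|\Psi_n(u)\|$, where $\Psi$ is the candidate map and $u \in M_n(d(X_\theta^*))$; writing $u$ via a representing function $g$ on $\mathbb{D}$ and $x$ via $f$, the pairing becomes a contour integral, and one applies the Schwarz-Pick bound on $\mathbb{R}$-valued (really matrix-valued) boundary data to control the off-diagonal ``derivative $\times$ value'' terms by the boundary norms, which are the $X_0^*, X_1^*$ and $X_0, X_1$ norms. Because the normalization trick noted after Proposition \ref{prop:3.2} (multiplying by $e^{\delta(z-\theta)^2}$) lets us assume the boundary functions decay, the relevant integrals converge and the estimates are clean; the constant one obtains is universal, depending only on $\varphi'(\theta)$. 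Finally, once $\Psi$ is shown to be a completely bounded bijection compatible with both diagrams, the $3$-lemma upgrades this to a complete isomorphic equivalence of extension sequences.

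\emph{The main obstacle} I anticipate is the well-definedness and nondegeneracy of the pairing at the level of the quotient spaces: one must show that the bilinear form above annihilates $\ker\delta_\theta \cap \ker\delta_\theta'$ in the second variable and the analogous kernel in the first, and that it is nondegenerate, so that it genuinely identifies $d(X_\theta^*)$ with $(dX_\theta)^*$ rather than merely mapping one into the other. This is a cohomological/Cauchy-integral computation on $\mathcal{F}(\overline X)$ and $\mathcal{F}(\overline{X^*})$ where the analyticity and the boundary integrability (guaranteed by the decay normalization) must be combined carefully; the operator-space refinement, namely that the resulting identification is \emph{completely} isometric (or at least completely bounded both ways), is what forces the use of Lemma \ref{lem:3.3} in place of the classical scalar Schwarz-Pick lemma used in the Banach space references \cite{Cwikel01, Rochberg01}.
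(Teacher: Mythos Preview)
Your proposal is correct and follows essentially the same route as the paper: the pairing $T(x^*,y^*)(x,y)=\langle x^*,y\rangle+\langle y^*,x\rangle$, the reduction to the matrix-valued function $H(z)=\langle f(z),g(z)\rangle$ on the strip, the use of the Noncommutative Schwarz--Pick Lemma to bound $\|H'(\theta)\|$, and the $3$-lemma to finish. The only notable difference is that the paper does not carry out the well-definedness/nondegeneracy verification you describe: it defines $T$ on the dense sets $A,B$ of pairs with entries in $\Delta(\overline{X^*})$, $\Delta(\overline X)$ (where the pairing makes sense by $\Delta(\overline{X^*})=\Sigma(\overline X)^*$), cites the classical Banach-space result of Rochberg--Weiss for the fact that $T$ is an isomorphism, and proves only the new ingredient, namely complete boundedness, via the Schwarz--Pick estimate.
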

\begin{proof}
We must define a completely bounded map $T : d(X_{\theta}^*) \rightarrow (dX_{\theta})^*$ making the diagram commute. By Lemma 4.2.3 of \cite{Bergh01}, we have that the sets
\begin{eqnarray*}
A & = & \{(x^*, y^*) \in d(X_{\theta})^* : x^*, y^* \in \Delta(\overline{X^*})\} \\
B & = & \{(x, y) \in dX_{\theta} : x, y \in \Delta(\overline{X})\}
\end{eqnarray*}
\noindent are dense in $d(X_{\theta}^*)$ and $dX_{\theta}$, respectively. For $(x^*, y^*)$ and $(x, y)$ in these sets, define
\[
T(x^*, y^*)(x, y) = x^*y + y^*x
\]

This is well-defined because $\Delta(\overline{X^*}) = \Sigma(\overline{X})^*$ and $\Sigma(\overline{X^*}) = \Delta(\overline{X})^*$ (\cite{Bergh01}, Theorem 2.7.1). It is a classical result that $T$ defines an isomorphism between $d(X_{\theta}^*)$ and $(dX_{\theta})^*$ (see \cite{Rochberg01}). We must show that it is completely bounded.

Recall that $M_n((dX_{\theta})^*) = CB(dX_{\theta}, M_n)$. 
Suppose that $(x^*, y^*) \in M_n(A)$ and $(x, y) \in B$, and take $f \in M_n(\mathcal{F}(\overline{X^*}))$ and $g \in \mathcal{F}(\overline{X})$ such that $f(\theta) = x^*, f'(\theta) = y^*, g(\theta) = x, g'(\theta) = y$, and $\|f\| \leq \|(x^*, y^*)\| + \epsilon$, $\|g\| \leq \|(x, y)\| + \epsilon$. 

By a modification of Stafney's argument (Lemma 2.5, \cite{Stafney01}, use $r^2$ instead of $r$), we may suppose that the entries of $f$ and $g$ are continuous functions with values in $\Delta(\overline{X^*})$ and $\Delta(\overline{X})$, respectively (actually, we may suppose they are functions of the form of the ones of Lemma 4.2.3 of \cite{Bergh01}).

We define the function $H : \mathbb{S} \rightarrow M_n$ given by
\[
H(z) = (<f_{i, j}(z), g(z)>)
\]
which is continuous, bounded (since each entry is bounded), and analytic in the interior of $\mathbb{S}$, with derivative given by
\[
H'(z) = (<f'_{i, j}(z), g(z)> + <f_{i, j}(z), g'(z)>)
\]

We have the estimate
\begin{eqnarray*}
\|H(it)\| & = & \|(<f_{i, j}(it), g(it)>)\| \\
          & = & \|f(it)(g(it))\| \\
          & \leq & \|f(it)\|_{M_n(X_0^*) = CB(X_0, M_n)} \|g(it)\|_{X_0} \\
          & \leq & (\|(x^*, y^*)\| + \epsilon)(\|(x, y)\| + \epsilon)
\end{eqnarray*}

\noindent and the same for $\|H(1+it)\|$, $t \in \mathbb{R}$. Therefore, $\|H(z)\| \leq (\|(x^*, y^*)\| + \epsilon)(\|(x, y)\| + \epsilon) = K$, for all $z \in \mathbb{S}$.

Let $\varphi : \mathbb{D} \rightarrow \mathbb{S}$ be a conformal map and $\psi : \mathbb{D} \rightarrow M_n$ be given by $\psi = \frac{H \circ \varphi}{K}$. Then $\psi$ satisfies the assumptions of \hyperref[lem:3.3]{Lemma 3.3}, and since
\[
\psi'(z) = \frac{\varphi'(z)H'(\varphi(z))}{K}
\]

\noindent we have
\begin{equation*}
\|H'(\theta)\| \leq C (\|(x^*, y^*)\| + \epsilon)(\|(x, y)\| + \epsilon)
\end{equation*}

\noindent for some universal constant $C > 0$. Since $\epsilon$ was arbitrary,
\begin{equation}\label{eq:3.1}
\|H'(\theta)\| \leq C \|(x^*, y^*)\|\|(x, y)\|
\end{equation}

But $H'(\theta) = T_n(x^*, y^*)(x, y)$. The map $T(x^*, y^*)$ was only defined for $(x, y) \in B$. By \eqref{eq:3.1}, it is continuous, and therefore extends to all $dX_{\theta}$.

Again by \eqref{eq:3.1}, $T$ extends to an operator from $d(X_{\theta}^*)$ into $(dX_{\theta})^*$ which is completely bounded, and by the $3-$lemma \hyperref[prop:2.3]{(Proposition 2.3)}, it is a complete isomorphism.
\end{proof}

We pass now to some basic examples.

\subsection{$O\ell_p$ spaces}\label{sec:3.3}
The interpolation scale $(\ell_{\infty}, \ell_1)_{\theta} = \ell_{\frac{1}{\theta}}$ induces for each $1 < p < \infty$ the Kalton-Peck space $Z_p$, which appears as an extension of $\ell_p$:
\begin{equation}\label{eq:3.2}
\xymatrix{
0 \ar[r] & \ell_p \ar[r] & Z_p \ar[r] & \ell_p \ar[r] & 0}
\end{equation}

For each $1 < p < \infty$ this is a singular cosingular twisted sum \cite{Kalton01}.

In \cite{Pisier05}, Pisier gives $\ell_p$ a natural operator space structure in the following way: recall that $\ell_{\infty}$ is a $C^*-$algebra, and therefore has a natural operator space structure that coincides with $\min(\ell_{\infty})$. Its predual, $\ell_1$, has then a natural operator space structure induced by $(\min(\ell_{\infty}))^*$, which coincides with $\max(\ell_1)$ \cite{Pisier04}.

Then for $1 < p < \infty$ we have a natural operator space structure on $\ell_p$ induced by the interpolation scale $(\min(\ell_{\infty}), \max(\ell_1))_{\frac{1}{p}} = O\ell_p$.

We also get a natural operator space structure on $Z_p$, for $1 < p < \infty$, for which \eqref{eq:3.2} becomes an extension sequence.

Since at the Banach level \eqref{eq:4.1} is singular and cosingular, it is completely singular and completely cosingular.

\subsection{Twisting $\min(\ell_p)$ and $\max(\ell_p)$}\label{sec:3.4}
By using first the pullback and then the pushout on \eqref{eq:4.1} we get a completely singular and completely cosingular complete twisted sum of $\min(\ell_p)$ and $\max(\ell_p)$. Indeed, using the pullback we get a commutative diagram
\[
\xymatrix{
& 0 & 0 & 0\\
0 \ar[r] & 0 \ar@{=}[d]\ar[r]\ar[u] & \max(\ell_p) \ar[r]^{Id_{\ell_p}}\ar[u] & O\ell_p \ar[r]\ar[u] & 0\\
0 \ar[r] & 0 \ar[r] & \Xi \ar[r]\ar[u] & Z_p \ar[u]\ar[r] & 0 \\
0 \ar[r] & 0 \ar[r]\ar[u] & O\ell_p \ar[r]\ar[u] & O\ell_p \ar[u]\ar[r] & 0\\
& 0 \ar[u] & 0 \ar[u] & 0 \ar[u]
}
\]

The second column is complete exact, and since in the Banach space level it is equivalent to the third, it is completely singular and completely cosingular.

Now, using the pushout, we get
  \[
  \xymatrix{ 0 \ar[r] & O\ell_p \ar[r]\ar[d]^{Id_{\ell_p}} & \Xi
    \ar[r]\ar[d]^{T} & \max(\ell_p)\ar[r] \ar@{=}[d] &0 \\ 0 \ar[r] & \min(\ell_p)
    \ar[r] & \mathcal{PO} \ar[r]& \max(\ell_p)\ar[r] &0}
  \]
  
For the same reason as before, the second line is completely singular and completely cosingular, and $\mathcal{PO}$ as a Banach space is simply $Z_p$.

Notice that this construction is always possible when we have a complete twisted sum which is singular/cosingular as a twisted sum of Banach spaces.

\subsection{Noncommutative $L_p$ spaces}\label{sec:3.5}
In \cite{CabelloSanchez2016}, Cabello S\'anchez, Castillo, Goldstein and Su\'arez de la Fuente study twisted sums induced by the interpolation scale $(\mathcal{M}, L_1(\mathcal{M}, \tau))_{\theta} = L_p(\mathcal{M}, \tau)$, $\theta = \frac{1}{p}$, where $\mathcal{M}$ is a von Neumann algebra and $\tau$ is a trace. They also consider twisted sums induced by general $\sigma-$finite algebras. (Notice that they consider the spaces as Banach modules over $\mathcal{M}$)

Since these noncommutative spaces also have natural operator space structures defined in \cite{Pisier05}, the resulting twisted sums also have an operator space structure induced by interpolation.

\section{$o(\ell_p)$ spaces}\label{sec:4}
\subsection{The isomorphic Palais' problem for operator spaces}\label{sec:4.1} In this section we give examples of complete twisted sums that are trivial in the Banach space level, but are not completely trivial. They are even completely singular and completely cosingular.

One of the central objects of Operator Space Theory is the Operator Hilbert Space $OH$, the only operator space structure on $\ell_2$ for which the canonical isometry between $\ell_2$ and its conjugate dual is a complete isometry.

The operator space $OH$ may be obtained in various ways. For example, under certain conditions we have that complex interpolation between an operator space and its dual at $\theta = \frac{1}{2}$ gives us $OH$ \cite{Pisier03}.

One of the most natural ways to obtain $OH$ is by the identity 
\[
OH = (\min(\ell_2), \max(\ell_2))_{\frac{1}{2}}
\]

So one is led to consider the following quantization of a Banach space $X$, as defined in \cite{Pisier03}: 
\[
o(X) = (\min(X), \max(X))_{\frac{1}{2}}
\]

More generally, we will consider $o(\ell_p)(\theta) = (\min(\ell_p), \max(\ell_p))_{\theta}$, for $0 < \theta < 1$. We then have complete twisted sums
\begin{equation}\label{eq:4.1}
\xymatrix{
0 \ar[r] & o(\ell_p)(\theta) \ar[r] & do(\ell_p)(\theta) \ar[r] & o(\ell_p)(\theta) \ar[r] & 0}
\end{equation}

Clearly in the Banach space level these are trivial twisted sums.
\begin{theorem}\label{teo:4.1}
The extension sequence \eqref{eq:4.1} is completely singular and completely cosingular for every $1 < p < \infty$ and for every $0 < \theta < 1$.
\end{theorem}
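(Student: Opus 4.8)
The plan is to prove complete singularity first — that the quotient map $q$ of \eqref{eq:4.1} is completely strictly singular — and then to derive complete cosingularity of the inclusion $i$ from it by duality, using Theorem~\ref{teo:3.4} together with Proposition~\ref{prop:2.15}. For the duality step, note that the dual couple of $(\min(\ell_p),\max(\ell_p))$ is $((\min(\ell_p))^*,(\max(\ell_p))^*)=(\max(\ell_{p'}),\min(\ell_{p'}))$, so that $(\max(\ell_{p'}),\min(\ell_{p'}))_\theta = o(\ell_{p'})(1-\theta)$. Since $\ell_p$ is reflexive and $\Delta=\ell_p$ is dense in both endpoints, Theorem~\ref{teo:3.4} identifies the dual of \eqref{eq:4.1} for $(p,\theta)$, completely isomorphically, with the sequence \eqref{eq:4.1} for $(p',1-\theta)$; under this identification $i^*$ corresponds, up to complete isomorphisms, to the quotient map for $(p',1-\theta)$. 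Thus, once $q$ is shown to be completely strictly singular for \emph{all} parameters, $i^*$ is completely strictly singular by Proposition~\ref{prop:2.14}, and Proposition~\ref{prop:2.15}(2) yields $i\in CSC$. So it suffices to establish complete singularity for every $1<p<\infty$ and $0<\theta<1$.

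To prove $q$ is completely strictly singular I would argue by contradiction and reduce to a single non-triviality statement via homogeneity. First, $o(\ell_p)(\theta)$ is homogeneous: any contraction $\ell_p\to\ell_p$ is completely contractive on $\min(\ell_p)$ and on $\max(\ell_p)$ (these being homogeneous), hence on $o(\ell_p)(\theta)$ by completely bounded interpolation \cite{Pisier03}. Consequently every normalized block subspace $V_0=[u_k]$ of the canonical basis is completely isometric to $o(\ell_p)(\theta)$ (the block embedding and its associated projection are Banach contractions, hence completely contractive), and the corresponding block endomorphism is a morphism of the interpolation couple, so the restriction of \eqref{eq:4.1} over $V_0$ is completely equivalent to \eqref{eq:4.1} itself. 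Now suppose $q|_W$ were a complete isomorphism on some infinite-dimensional closed $W\subseteq do(\ell_p)(\theta)$. By Proposition~\ref{prop:2.9} the extension induced over the infinite-dimensional subspace $V=q(W)\subseteq o(\ell_p)(\theta)$ is completely trivial; a standard gliding-hump and small-perturbation argument, controlled completely by homogeneity, produces inside $V$ a subspace completely isomorphic to a block subspace $V_0$, and restricting the completely bounded section to $V_0$ shows the extension over $V_0$ is completely trivial. By the previous observation this forces \eqref{eq:4.1} itself to be completely trivial. It therefore suffices to prove the following claim $(\ast)$: the extension sequence \eqref{eq:4.1} is not completely trivial.

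By Proposition~\ref{prop:2.7} complete triviality of \eqref{eq:4.1} would furnish a completely bounded retraction $r$ of $i$. Amplifying to $M_n$ and using the identifications $M_n(o(\ell_p)(\theta))=(M_n(\min(\ell_p)),M_n(\max(\ell_p)))_\theta$ and $M_n(do(\ell_p)(\theta))=d\big(M_n(o(\ell_p)(\theta))\big)$, the maps $r_n$ would split, with splitting constants bounded uniformly by $\|r\|_{cb}$, the \emph{Banach-space} derived twisted sums of the finite-dimensional couples $(M_n(\min(\ell_p)),M_n(\max(\ell_p)))$ at $\theta$. Thus $(\ast)$ reduces to showing that the best linear-selection constant of these finite-dimensional derived twisted sums tends to infinity with $n$.

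I would establish this by a Kalton--Peck-type lower bound: the Rochberg--Weiss derivation of the couple $(M_n(\min(\ell_p)),M_n(\max(\ell_p)))$ behaves, on suitable test matrices, like a logarithm of the ratio between the $M_n(\min(\ell_p))$- and $M_n(\max(\ell_p))$-norms, and this ratio grows without bound precisely because $\min(\ell_p)$ and $\max(\ell_p)$ are not completely isomorphic. The main obstacle is exactly this quantitative step: producing, for each $n$, explicit matrices on which the derivation cannot be corrected by a linear map up to a bound independent of $n$ — equivalently, converting the qualitative fact that $\min(\ell_p)$ is not completely isomorphic to $\max(\ell_p)$ into a divergent sequence of finite-dimensional splitting constants. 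Everything else — the duality reduction, homogeneity, and the passage to block subspaces — is soft and follows the pattern already set in Sections~\ref{sec:2} and~\ref{sec:3}.
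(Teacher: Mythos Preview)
Your duality deduction of complete cosingularity from complete singularity is exactly the paper's argument.

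For complete singularity your reduction to the single non-triviality claim $(\ast)$ is valid and is in fact a more direct route than the paper's. The paper does not reduce to $(\ast)$; instead it proves the stronger structural fact (Proposition~\ref{prop:4.8}) that the ``section subspace'' $\{(x,0):x\in o(\ell_p)(\theta)\}\subset do(\ell_p)(\theta)$ contains no completely isomorphic copy of $o(\ell_p)(\theta)$, and then derives complete singularity by combining this with the saturation statement of Lemma~\ref{lem:4.6} and the square-isomorphism $o(\ell_p)(\theta)\cong o(\ell_p)(\theta)\oplus o(\ell_p)(\theta)$ of Lemma~\ref{lem:4.9}. Your approach bypasses Lemma~\ref{lem:4.9} and the saturation argument entirely. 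Both routes, however, rest on the same quantitative core --- the explicit norm computations for the test matrices $x^n$ in $M_n(\min(\ell_p))$, $M_n(\max(\ell_p))$ and $M_n(o(\ell_p)(\theta))$ (Lemmas~\ref{lem:4.2}--\ref{lem:4.5}) together with the Rochberg--Weiss estimate (Lemma~\ref{lem:4.7}). With those in hand your $(\ast)$ is immediate: a completely bounded section $x\mapsto(x,Lx)$ has $L$ bounded on $\ell_p$ by homogeneity, and testing on $x^n$ forces $\log\lambda_n$ to stay bounded, a contradiction. What the paper's longer route buys is the extra structural information recorded as Proposition~\ref{prop:4.12}.

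One imprecision worth fixing: in your gliding-hump step you write ``inside $V$ a subspace completely isomorphic to a block subspace $V_0$, and restricting the completely bounded section to $V_0$'', but $V_0$ need not lie in $V$, so there is no restriction to speak of; the perturbation from the subspace of $V$ to $V_0$ must be carried through $do(\ell_p)(\theta)$, and the naive correction term $x\mapsto(x,0)$ is \emph{not} completely bounded here. The clean fix --- which is essentially your own remark that the block endomorphism is a morphism of the interpolation couple, made explicit --- is to pass directly to a subspace $V'\subset V$ complemented in $\ell_p$, choose an isomorphism $T:\ell_p\to V'$ and a bounded left inverse $S=T^{-1}P:\ell_p\to\ell_p$, and transfer the section $s$ on $V'$ back to the whole space via $\tilde s=(S,S)\circ s\circ T$, using that every bounded endomorphism of $\ell_p$ acts completely boundedly on $do(\ell_p)(\theta)$ by $(A,A)$.
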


This section is devoted to proving \hyperref[teo:4.1]{Theorem 4.1}.

For each $n \in \mathbb{N}$ let $x^n \in M_n(c_{00})$ be the matrix with first line $(e_1 \; ... \; e_n)$ and $0$ elsewhere.

\begin{lemma}\label{lem:4.2}
For each $n \in \mathbb{N}$ and $1 \leq p \leq \infty$, we have:
\[
\|x^n\|_{M_n(\min(\ell_p))} = \begin{cases} n^{\frac{1}{p} - \frac{1}{2}}, & \mbox{if } 1 \leq p \leq 2 \\ 
1, & \mbox{if } p \geq 2 \end{cases}
\]
\end{lemma}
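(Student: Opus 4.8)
The plan is to read off the norm directly from the characterization \eqref{eq:1.3} of the minimal operator space structure. Since the only nonzero entries of $x^n$ are $x^n_{1,j}=e_j$ for $1\le j\le n$, for scalars $(\lambda_i)_{i=1}^n$ and $(\mu_j)_{j=1}^n$ one computes
\[
\sum_{i,j}\lambda_i\mu_j x^n_{i,j} = \lambda_1\sum_{j}\mu_j e_j,
\]
whose $\ell_p$-norm is $\abs{\lambda_1}\,\bigl(\sum_j\abs{\mu_j}^p\bigr)^{1/p}=\abs{\lambda_1}\,\|\mu\|_p$, where $\mu=(\mu_1,\dots,\mu_n)$. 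Under the constraints $\sum_i\abs{\lambda_i}^2\le 1$ and $\sum_j\abs{\mu_j}^2\le 1$, the supremum defining $\|x^n\|_{M_n(\min(\ell_p))}$ is clearly maximized by taking $\abs{\lambda_1}=1$ and $\lambda_i=0$ for $i\ge 2$, so that
\[
\|x^n\|_{M_n(\min(\ell_p))} = \sup\{\|\mu\|_p : \mu\in\mathbb{C}^n,\ \|\mu\|_2\le 1\}.
\]

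This reduces the lemma to a standard sharp comparison between the $\ell_p^n$- and $\ell_2^n$-norms, which I would treat in two cases. For $p\ge 2$ I would invoke the monotonicity $\|\mu\|_p\le\|\mu\|_2$, giving the upper bound $1$, with equality realized by any single coordinate vector $\mu=e_k$; hence the value is exactly $1$. For $1\le p\le 2$ I would apply H\"older's inequality to $\sum_j\abs{\mu_j}^p\cdot 1$ with conjugate exponents $2/p$ and $2/(2-p)$, obtaining $\|\mu\|_p\le n^{\frac1p-\frac12}\|\mu\|_2$ and hence the upper bound $n^{\frac1p-\frac12}$; equality is attained by the flat vector $\mu_j=n^{-1/2}$, for which $\|\mu\|_2=1$ and $\|\mu\|_p=(n\cdot n^{-p/2})^{1/p}=n^{\frac1p-\frac12}$. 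Matching the upper bounds with the explicit extremizers yields the two cases of the statement (and the two formulas agree at $p=2$, both giving $1$).

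There is essentially no serious obstacle here: the computation hinges only on the elementary but sharp norm-comparison inequalities and on checking that the proposed extremizers are admissible, i.e.\ satisfy $\|\mu\|_2\le 1$ while achieving the claimed $\ell_p$-value. The only point deserving a little care is getting the correct exponent $\frac1p-\frac12=\frac{2-p}{2p}$ out of H\"older, and noting that, since only the moduli $\abs{\mu_j}$ enter, the complex extremal problem is the same as the real nonnegative one. One could alternatively run everything through the dual characterization \eqref{eq:1.4}, but the primal form \eqref{eq:1.3} makes the reduction to this scalar extremal problem most transparent.
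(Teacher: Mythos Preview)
Your proposal is correct and is precisely the approach the paper intends: it simply cites formula \eqref{eq:1.3} and leaves the rest as a direct computation, which is exactly the reduction to $\sup\{\|\mu\|_p:\|\mu\|_2\le1\}$ that you carry out. Your details (H\"older for $1\le p\le2$, monotonicity for $p\ge2$, and the explicit extremizers) are the standard way to finish, so there is nothing to add.
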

\begin{proof}
The proof is a simple application of formula \eqref{eq:1.3}.
\end{proof}

We will also consider the following sequence $(A_n)$ of scalar matrices: let $A_1$ be simply $(1)$, and given $A_n$, the matrix $A_{n+1}$ is given by
\[A_{n+1} =  \begin{pmatrix}
1 & \block(2,2){A_n} \\
\vdots & \block(2,2){-A_n}  \\
  1 & &&
\end{pmatrix} \]

The following is an easy consequence of the definition.
\begin{lemma}\label{lem:4.3}
For each $n \in \mathbb{N}$, we have:
\begin{enumerate}
\item The scalar product of any two differents columns of $A_n$ is $0$.
\item $\|A_n\| = (2^{n-1})^{\frac{1}{2}}$
\end{enumerate}
\end{lemma}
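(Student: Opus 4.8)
The plan is to deduce both assertions from the single, stronger identity $A_n^* A_n = 2^{n-1} I$, established by induction on $n$. This strengthening of the induction hypothesis is the essential device: neither the orthogonality of the columns nor the value $(2^{n-1})^{1/2}$ of the operator norm transforms transparently under the recursion on its own, whereas the full Gram matrix does, precisely because the sign in the block $-A_n$ forces the mixed terms to cancel.

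For the base case, $A_1 = (1)$ gives $A_1^* A_1 = (1) = 2^0 I$. For the inductive step I would write $A_{n+1}$ in the block form dictated by the recursion, namely an all-ones first column $\mathbf 1$ on the left together with the blocks $A_n$ and $-A_n$ stacked on the right, and then compute the three possible types of column inner products separately. Writing $a_1, \dots, a_n$ for the columns of $A_n$, the corresponding columns of $A_{n+1}$ arising from the right-hand blocks are $d_k = (a_k; -a_k)$, while the first column is $c_0 = (\mathbf 1; \mathbf 1)$. Then $\langle c_0, c_0\rangle = 2^n$; the inner product of $c_0$ with any $d_k$ is $\langle \mathbf 1, a_k\rangle - \langle \mathbf 1, a_k\rangle = 0$; and $\langle d_k, d_l\rangle = 2\langle a_k, a_l\rangle$, which by the inductive hypothesis equals $2\cdot 2^{n-1}\delta_{kl} = 2^n \delta_{kl}$. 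Collecting these entries yields $A_{n+1}^* A_{n+1} = 2^n I$, closing the induction.

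Both claims then follow immediately. For part (1), the $(i,j)$ entry of $A_n^* A_n$ is the scalar product of the $i$-th and $j$-th columns of $A_n$; since $A_n^* A_n = 2^{n-1} I$ is diagonal, this product vanishes whenever $i \neq j$. For part (2), $\|A_n\|^2 = \|A_n^* A_n\| = \|2^{n-1} I\| = 2^{n-1}$, so $\|A_n\| = (2^{n-1})^{1/2}$.

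The computation is entirely routine, and I expect no genuine obstacle. The one point that repays attention is the vanishing of the mixed inner products, which is exactly where the sign in $-A_n$ is used, and which is the reason the Gram identity $A_n^* A_n = 2^{n-1} I$, rather than either stated property taken in isolation, is the right quantity to propagate through the induction.
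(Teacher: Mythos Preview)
Your proof is correct. The paper itself does not give an explicit argument, stating only that the lemma is ``an easy consequence of the definition''; your induction on the recursion, via the strengthened hypothesis $A_n^* A_n = 2^{n-1} I$, is precisely the natural way to make that remark rigorous.
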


\begin{lemma}\label{lem:4.4}
For each $n \in \mathbb{N}$ and $1 \leq p \leq \infty$, we have:
\[
\|x^n\|_{M_n(\max(\ell_p))} = \begin{cases} n^{\frac{1}{2}}, & \mbox{if } 1 \leq p \leq 2 \\ 
n^{\frac{1}{p}}, & \mbox{if } p \geq 2 \end{cases}
\]
\end{lemma}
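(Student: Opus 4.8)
The plan is to combine the factorization description \eqref{eq:1.5} of the maximal norm with the universal property $\|\Phi:\max(\ell_p)\to M_n\|_{cb}=\|\Phi\|$. Formula \eqref{eq:1.5} is an infimum, so exhibiting factorizations $x^n=ADB$ will produce \emph{upper} bounds, while testing $x^n$ against a single matrix-valued completely bounded functional will produce the matching \emph{lower} bound. I will write the target value uniformly as $n^{\min(1/p,1/2)}$, and I expect the regimes $1\le p\le 2$ and $p\ge 2$ to be governed by two different optimal factorizations, whereas one functional will settle the lower bound in both ranges.

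For the upper bound I would exhibit two factorizations $x^n=ADB$ with $A,B$ scalar and $D=\mathrm{diag}(d_1,\dots,d_n)$ diagonal with entries in $\ell_p$. First, let $A$ be the matrix whose first row is $(1,\dots,1)$ and which is zero elsewhere, $D=\mathrm{diag}(e_1,\dots,e_n)$ and $B=I_n$; then $ADB=x^n$, and since $\|A\|=n^{1/2}$, $\|D\|=\max_k\|e_k\|_p=1$, $\|B\|=1$, this gives $\|x^n\|_{M_n(\max(\ell_p))}\le n^{1/2}$. Second, let $W=(n^{-1/2}\omega^{jk})_{j,k}$ with $\omega=e^{2\pi i/n}$ be the flat unitary Fourier matrix; keep the same $A$, take $d_k$ to be the $k$-th column of $W$, and $B=W^{*}$. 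Using $WW^{*}=I$ one verifies $ADB=x^n$, and now $\|A\|=n^{1/2}$, $\|B\|=1$, while each $d_k$ is flat so $\|D\|=\max_k\|d_k\|_p=n^{1/p-1/2}$, giving $\|x^n\|_{M_n(\max(\ell_p))}\le n^{1/p}$. Taking the smaller of the two bounds yields exactly $n^{\min(1/p,1/2)}$ as an upper estimate, the first factorization being decisive for $p\le 2$ and the second for $p\ge 2$.

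For the matching lower bound I would use that $\max(\ell_p)$ makes every bounded map completely bounded: for any $\Phi\in CB(\max(\ell_p),M_n)$ the definition of the completely bounded norm gives $\|(\mathrm{id}_{M_n}\otimes\Phi)(x^n)\|_{M_{n^2}}\le\|\Phi\|_{cb}\,\|x^n\|_{M_n(\max(\ell_p))}$, and $\|\Phi\|_{cb}=\|\Phi\|$ by the universal property quoted above. I choose $\Phi(a)=\sum_{j=1}^n a_j E_{1j}$, so that $\|\Phi\|=\sup\{\|b\|_2 : b\in\mathbb{C}^n,\ \|b\|_p\le 1\}$, which equals $1$ for $p\le 2$ and $n^{1/2-1/p}$ for $p\ge 2$. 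On the other hand $(\mathrm{id}_{M_n}\otimes\Phi)(x^n)=\sum_{j=1}^n E_{1j}\otimes E_{1j}$, and computing $MM^{*}$ for $M=\sum_j E_{1j}\otimes E_{1j}$ gives $MM^{*}=nE_{11}\otimes E_{11}$, hence $\|M\|=n^{1/2}$. Therefore $\|x^n\|_{M_n(\max(\ell_p))}\ge n^{1/2}/\|\Phi\|=n^{\min(1/p,1/2)}$, matching the upper bound and proving the lemma.

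I expect the lower bound to be the main obstacle, since \eqref{eq:1.5} only yields upper estimates and controlling every factorization from below directly is awkward. The device that removes the difficulty is the universal property of $\max$, which reduces the lower bound to the choice of one completely bounded matrix-valued functional; the crucial point is to select $\Phi$ so that $(\mathrm{id}\otimes\Phi)(x^n)=\sum_j E_{1j}\otimes E_{1j}$ has the clean norm $n^{1/2}$, while $\|\Phi\|$ collapses to the elementary quantity $\sup\{\|b\|_2:\|b\|_p\le 1\}$. Everything else is a routine verification.
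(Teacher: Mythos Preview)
Your argument is correct. The lower-bound device is in fact the same functional the paper uses for $p\ge 2$: viewing $x^n\in M_n(\max(\ell_p))=CB(\min(\ell_q),M_n)$ and evaluating at $x^n\in M_n(\min(\ell_q))$ is exactly your $\Phi(e_j)=E_{1j}$ applied entrywise. The genuine differences are on the upper bound and in the treatment of $p\le 2$. For the upper bound the paper builds a factorization through a $2^{n-1}\times 2^{n-1}$ Hadamard-type matrix $A_n$, which yields only $\|x^n\|_{M_n(\max(\ell_p))}\le n^{1/p}$; your two factorizations stay in $M_n$, and the trivial one $A=\sum_j E_{1j}$, $D=\mathrm{diag}(e_1,\dots,e_n)$, $B=I_n$ gives directly the bound $n^{1/2}$ needed for $p\le 2$ (the paper does not write this explicitly). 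The Fourier-matrix factorization is a neat replacement for the paper's Hadamard construction and gives the same $n^{1/p}$ with less overhead. For the lower bound when $p\le 2$ the paper argues indirectly, reducing to $p=2$ via the monotonicity $\|D\|_{\ell_2}\le\|D\|_{\ell_p}$ for diagonal $D$ and the known value $\|x^n\|_{M_n(\max(\ell_2))}=\|x^n\|_{M_n(R)}=n^{1/2}$; your single $\Phi$ handles both ranges uniformly. Overall your route is shorter and more self-contained, at the cost of not exhibiting the link with $R$ that the paper exploits later.
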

\begin{proof}
Let $A \in M_{n, 2^{n-1}}$ be given by
\[\frac{1}{2^{n-1}}
\begin{pmatrix}
1 & \hdots & 1 \\
0 & \hdots & 0  \\
\vdots & \ddots & \vdots \\
0 & \hdots & 0
\end{pmatrix}
\]

Writing $A_n = (a_{i, j}^{(n)})$, let $D \in M_{2^{n-1}}(\ell_p) = (d_{i, j})$ be the diagonal matrix given by $d_{i, i} = \sum\limits_{l=1}^n a_{i, l}^{(n)} e_l$.

Then a simple calculation shows that $x^n = ADA_n$, and by \eqref{eq:1.5}, $\|x^n\| \leq \|A\|\|D\|\|A_n\| = n^{\frac{1}{p}}$.

Since $\|x^n\|_{M_n(R)} = n^{\frac{1}{2}}$, we have that $\|x^n\|_{M_n(\max(\ell_2))} = n^{\frac{1}{2}}$.

Now let $1 \leq p < 2$. If $\|x^n\|_{M_n(\max(\ell_p))} < n^{\frac{1}{2}}$, we have a decomposition $x^n = ADB$, with $A$ and $B$ scalar matrices, $D \in M_n(\ell_p)$ a diagonal matrix, and $\|A\|\|D\|_{M_n(\ell_p)}\|B\| < n^\frac{1}{2}$. But $\|D\|_{M_n(\ell_2)} \leq \|D\|_{M_n(\ell_p)}$, which would imply $\|x^n\|_{M_n(\max(\ell_2))} < n^{\frac{1}{2}}$, a contradiction. Therefore, the case $1 \leq p \leq 2$ is proved.

If $p \geq 2$, recalling that $M_n(\max(\ell_p)) = M_n(\min(\ell_q)^*) = CB(\min(\ell_q), M_n)$ isometrically, where $\frac{1}{p} + \frac{1}{q} = 1$, we have
\begin{eqnarray*}
\|x^n\|_{M_n(\max(\ell_p))} & \geq & \frac{\|(x^n)_n (x^n)\|}{\|x^n\|_{\min(\ell_q)}} \\
        & = & \frac{n^{\frac{1}{2}}}{n^{\frac{1}{q} - \frac{1}{2}}} \\
        & = & n^\frac{1}{p}
\end{eqnarray*}

Therefore, $\|x^n\|_{M_n(\max(\ell_p))} = n^{\frac{1}{p}}$.
\end{proof}

\begin{lemma}\label{lem:4.5}
For each $n \in \mathbb{N}$, $1 \leq p < \infty$ and $\theta \in (0, 1)$, we have:
\[
\|x^n\|_{M_n(o(\ell_p)(\theta))} = \begin{cases} n^{\frac{1}{2} - (1-\theta)(1 - \frac{1}{p})}, & \mbox{if } 1 \leq p \leq 2 \\ 
n^{\frac{\theta}{p}}, & \mbox{if } p \geq 2 \end{cases}
\]
\end{lemma}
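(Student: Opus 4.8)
The plan is to compute the interpolation norm $\|x^n\|_{M_n(o(\ell_p)(\theta))}$ by combining the two endpoint estimates already established in Lemmas \ref{lem:4.2} and \ref{lem:4.4} together with the reiteration/power properties of the complex interpolation functor. The key structural fact I would exploit is that $x^n$ behaves like a single ``elementary tensor'' whose norm in each endpoint space is a pure power of $n$, so its interpolation norm should interpolate geometrically between the two endpoint powers. Concretely, writing $a = \frac{1}{p} - \frac{1}{2}$ or $0$ (the $\min$ exponent from Lemma \ref{lem:4.2}) and $b = \frac{1}{2}$ or $\frac{1}{p}$ (the $\max$ exponent from Lemma \ref{lem:4.4}) in the respective ranges $1 \le p \le 2$ and $p \ge 2$, the expected answer is $n^{(1-\theta)a + \theta b}$, and one checks directly that this equals $n^{\frac12 - (1-\theta)(1-\frac1p)}$ for $1 \le p \le 2$ and $n^{\frac{\theta}{p}}$ for $p \ge 2$.

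\medskip

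For the upper bound I would construct an explicit analytic function $f \in M_n(\mathcal{F})$ with $f(\theta) = x^n$ realizing the geometric mean of the endpoint norms. First I would take optimal (or near-optimal) representations of $x^n$ as an element of $M_n(\min(\ell_p))$ and $M_n(\max(\ell_p))$; since $x^n$ has a single nonzero row with entries $e_1, \dots, e_n$, a natural candidate is a function of the form $f(z) = \lambda(z)\, x^n$ where $\lambda$ is a suitable scalar analytic function on $\mathbb{S}$ chosen so that $|\lambda(it)|$ and $|\lambda(1+it)|$ scale the fixed matrix $x^n$ to the correct endpoint norms. Using the three-lines lemma (or the explicit modulus of such a $\lambda$) then gives $\|x^n\|_{M_n(X_\theta)} \le \|x^n\|_{M_n(\min(\ell_p))}^{1-\theta}\|x^n\|_{M_n(\max(\ell_p))}^{\theta}$, which is exactly the claimed power.

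\medskip

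For the matching lower bound the cleanest route is duality: by the completely isometric identifications $\min(\ell_p)^* = \max(\ell_q)$ and $\max(\ell_p)^* = \min(\ell_q)$ (with $\frac1p+\frac1q=1$) and the duality of complex interpolation, $o(\ell_p)(\theta)^*$ identifies with $o(\ell_q)(\theta)$ up to the conjugation in the $\theta$-parameter. I would then pair $x^n$ against a well-chosen functional built from the matrix $A_n$ of Lemma \ref{lem:4.3} — whose orthogonal columns and norm $\|A_n\| = (2^{n-1})^{1/2}$ are precisely what made the $\max$ computation in Lemma \ref{lem:4.4} sharp — to produce a lower bound on $\|x^n\|_{M_n(o(\ell_p)(\theta))}$ of the same order $n^{(1-\theta)a+\theta b}$. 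Alternatively one can argue directly that any analytic $f$ with $f(\theta)=x^n$ must have endpoint norms obeying $\|f(it)\|_{\min}\,,\,\|f(1+it)\|_{\max}$ bounded below, and apply Hadamard's three-lines theorem to the scalar function $t \mapsto \|f\|$; this avoids invoking the full duality theorem but requires the endpoint lower bounds from Lemmas \ref{lem:4.2} and \ref{lem:4.4} to be used as equalities.

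\medskip

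The main obstacle I anticipate is the lower bound, and specifically making the interpolation argument sharp rather than merely giving the trivial inequality $\|x^n\|_{X_\theta} \ge \max$ of something too weak. The geometric-mean upper bound is essentially automatic once the endpoint norms are pure powers; the difficulty is certifying that the interpolation norm does not drop strictly below the geometric mean. Here the homogeneity of $\min$ and $\max$, together with the fact that $x^n$ is ``extremal'' in the sense captured by the matrices $A_n$, should force equality, but one must verify that the optimizing analytic function cannot exploit the freedom in the strip to do better. I expect the case split at $p = 2$ to mirror the one in Lemmas \ref{lem:4.2} and \ref{lem:4.4}, with the regime $p \ge 2$ handled most transparently through the $CB(\min(\ell_q), M_n)$ description used at the end of the proof of Lemma \ref{lem:4.4}.
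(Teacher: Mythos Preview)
Your upper bound is exactly the paper's: the explicit scalar-multiple function $f_n(z)=e^{(z-\theta)\log(\lambda_n^{-1})}x^n$ realizes the geometric mean $\|x^n\|_{\min}^{1-\theta}\|x^n\|_{\max}^{\theta}$.

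The lower bound, however, has a gap. Your proposed pairing against a functional built from $A_n$ is not what is needed, and your alternative three-lines argument cannot work: Hadamard's inequality bounds $\|f(\theta)\|$ from \emph{above} by the boundary suprema, so it gives no lower bound on the infimum defining the interpolation norm. What the paper does instead is a self-dual bootstrap. First prove \emph{all} the upper bounds (for every $p$ and $\theta$). Then use the completely isometric identification
\[
M_n\bigl(o(\ell_p)(\theta)\bigr)=CB\bigl(o(\ell_q)(1-\theta),M_n\bigr),\qquad \tfrac1p+\tfrac1q=1,
\]
and test the operator $x^n$ on the matrix $x^n\in M_n(o(\ell_q)(1-\theta))$ itself:
\[
\|x^n\|_{M_n(o(\ell_p)(\theta))}\;\ge\;\frac{\|(x^n)_n(x^n)\|}{\|x^n\|_{M_n(o(\ell_q)(1-\theta))}}\;=\;\frac{n^{1/2}}{\text{(upper bound at }(q,1-\theta)\text{)}}.
\]
Plugging in the already-established upper bound for the dual exponent gives exactly the claimed power. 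The point you are missing is that the lower bound for $(p,\theta)$ comes from the \emph{upper} bound for $(q,1-\theta)$, not from a separate extremal construction; the matrices $A_n$ play no role beyond Lemma~\ref{lem:4.4}.
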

\begin{proof}
Since $\|x^n\|_{\min(\ell_1)} = \|x^n\|_{\max(\ell_1)} = n^{\frac{1}{2}}$, we have that $\|x^n\|_{M_n(o(\ell_1)(\theta))} = n^{\frac{1}{2}}$ for every $0 < \theta < 1$.

Let $1 < p < \infty$ and $0 < \theta < 1$ be fixed, and let
\[
\lambda_n = \begin{cases} n^{1 - \frac{1}{p}}, & \mbox{if } 1 < p \leq 2 \\ 
n^{\frac{1}{p}}, & \mbox{if } p \geq 2 \end{cases}
\]

Consider the interpolating function $f_n(z) = e^{(z-\theta)\log (\lambda_n^{-1})} x^n$. By lemmas \hyperref[lem:4.2]{4.2} and \hyperref[lem:4.4]{4.4}
\[
\|f_n\| = \begin{cases} n^{\frac{1}{2} - (1-\theta)(1 - \frac{1}{p})}, & \mbox{if } 1 \leq p \leq 2 \\ 
n^{\frac{\theta}{p}}, & \mbox{if } p \geq 2 \end{cases}
\]

Since $f_n(\theta) = x^n$, we obtain
\[
\|x^n\|_{M_n(o(\ell_p)(\theta))} \leq \begin{cases} n^{\frac{1}{2} - (1-\theta)(1 - \frac{1}{p})}, & \mbox{if } 1 < p \leq 2 \\ 
n^{\frac{\theta}{p}}, & \mbox{if } p \geq 2 \end{cases}
\]

Also, since $o(\ell_q)(1-\theta)^* = (\min (\ell_q), \max(\ell_q))_{1-\theta}^* = (\max(\ell_p), \min(\ell_p))_{1-\theta} = o\ell_p(\theta)$, for $\frac{1}{p} + \frac{1}{q} =1$, we have $M_n(o(\ell_p)(\theta)) = M_n(o(\ell_q)(1-\theta)^*) = CB(o(\ell_q)(1-\theta), M_n)$, and we obtain:\\

\textit{$1 < p < 2$}
\begin{eqnarray*}
\|x^n\|_{M_n(o(\ell_p)(\theta))} & \geq & \frac{\|(x^n)_n (x^n)\|}{\|x^n\|_{M_n(o(\ell_q)(1-\theta))}} \\
        & \geq & \frac{n^{\frac{1}{2}}}{n^{\frac{1-\theta}{q}}} \\
        & = & n^{\frac{1}{2} - (1-\theta)(1 - \frac{1}{p})}
\end{eqnarray*}

\textit{$p \geq 2$}
\begin{eqnarray*}
\|x^n\|_{M_n(o(\ell_p)(\theta))} & \geq & \frac{\|(x^n)_n (x^n)\|}{\|x^n\|_{M_n(o(\ell_q)(1-\theta))}} \\
        & \geq & \frac{n^{\frac{1}{2}}}{n^{\frac{1}{2} - \theta(1-\frac{1}{q})}} \\
        & = & n^{\frac{\theta}{p}}
\end{eqnarray*}
\end{proof}

\begin{lemma}\label{lem:4.6}
Let $W$ be a complemented subspace of $\ell_p$ ($1 \leq p < \infty$). Then $W$ with the operator space structure inherited from $o(\ell_p)(\theta)$ is completely isomorphic to $o(\ell_p)(\theta)$. In particular, every closed infinite dimensional subspace of $o(\ell_p)(\theta)$ contains a further subspace completely isomorphic to $o(\ell_p)(\theta)$.
\end{lemma}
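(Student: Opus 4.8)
The plan is to reduce everything to the \emph{homogeneity} of $o(\ell_p)(\theta)$ combined with the classical structure theory of $\ell_p$. The first step is to observe that $o(\ell_p)(\theta)$ is a homogeneous operator space, so that every bounded endomorphism of $\ell_p$ acts completely boundedly on it. Indeed, let $T : \ell_p \to \ell_p$ be bounded. By \eqref{eq:1.6} the spaces $\min(\ell_p)$ and $\max(\ell_p)$ are homogeneous, so $T$ is completely bounded on each endpoint with $\|T\|_{cb} = \|T\|$. By the Riesz--Thorin theorem for operator spaces (the interpolation of complete boundedness recalled in \hyperref[sec:3.1]{Section 3.1}) together with the logarithmic convexity of the c.b.\ norm,
\[
\|T : o(\ell_p)(\theta) \to o(\ell_p)(\theta)\|_{cb} \leq \|T : \min(\ell_p) \to \min(\ell_p)\|_{cb}^{1-\theta}\,\|T : \max(\ell_p) \to \max(\ell_p)\|_{cb}^{\theta} = \|T\|,
\]
and since $\|T\| \leq \|T\|_{cb}$ always, we get $\|T\|_{cb} = \|T\|$ on $o(\ell_p)(\theta)$. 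This is the only input about the operator space structure that the argument will use.

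Next I would bring in the Banach space theory. Taking $W$ infinite dimensional (the only interesting case, as $o(\ell_p)(\theta)$ is infinite dimensional), Pe\l czynski's theorem gives that an infinite dimensional complemented subspace of $\ell_p$ ($1 \leq p < \infty$) is isomorphic to $\ell_p$. I would fix a Banach isomorphism $v : \ell_p \to \ell_p$ with range $W$ and a bounded projection $P : \ell_p \to \ell_p$ onto $W$, so that $P|_W = Id_W$. Writing $o(\ell_p)(\theta)|_W$ for $W$ with the operator space structure inherited from $o(\ell_p)(\theta)$ (so that $M_n(W) \subset M_n(o(\ell_p)(\theta))$ isometrically), I then check that $v$, regarded as a map $o(\ell_p)(\theta) \to o(\ell_p)(\theta)|_W$, is a complete isomorphism. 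For complete boundedness of $v$: by the homogeneity step $v$ is c.b.\ as an endomorphism of $o(\ell_p)(\theta)$, and corestricting its range to the subspace $W$ does not change the norms $\|v_n\|$, since the structure on $W$ is inherited. For complete boundedness of $v^{-1}$: the endomorphism $v^{-1}P : \ell_p \to \ell_p$ is bounded, hence c.b.\ on $o(\ell_p)(\theta)$ by the homogeneity step; its restriction to $W$ equals $v^{-1}$ because $P|_W = Id_W$, and the restriction of a c.b.\ map to a subspace is c.b. Thus $v$ is a complete isomorphism and the first assertion follows.

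Finally, for the ``in particular'' clause I would use that $o(\ell_p)(\theta)$ is, as a Banach space, just $\ell_p$. Given an infinite dimensional closed subspace $V$, a gliding hump / Bessaga--Pe\l czynski argument produces inside $V$ a closed subspace $W$ spanned by a block basic sequence, which is isomorphic to $\ell_p$ and complemented in $\ell_p$. Since the operator space structure $W$ inherits from $V$ coincides with the one it inherits from $o(\ell_p)(\theta)$, the first part applies and $W$ is completely isomorphic to $o(\ell_p)(\theta)$.

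I expect the only genuine point to be the homogeneity step; once it is established that bounded endomorphisms of $\ell_p$ are automatically completely bounded on $o(\ell_p)(\theta)$, the remainder is a formal manipulation of completely bounded maps together with the classical facts that complemented subspaces of $\ell_p$ are isomorphic to $\ell_p$ and that every infinite dimensional subspace of $\ell_p$ contains a complemented copy of $\ell_p$. The only technical care needed is to ensure that corestriction to, and restriction from, the subspace $W$ with its inherited structure preserve complete boundedness, which is immediate from the isometric inclusion $M_n(W) \subset M_n(o(\ell_p)(\theta))$.
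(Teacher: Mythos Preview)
Your proposal is correct and follows essentially the same route as the paper: both arguments rest on the homogeneity of $o(\ell_p)(\theta)$ obtained from \eqref{eq:1.6} together with the Riesz--Thorin theorem, use Pe\l czy\'nski's theorem to produce a Banach isomorphism between $W$ and $\ell_p$, and then factor the isomorphism and its inverse through bounded endomorphisms of $\ell_p$ via the projection and inclusion. The only cosmetic difference is direction (the paper writes $T:W\to\ell_p$ where you write $v:\ell_p\to W$), and you make explicit the Bessaga--Pe\l czy\'nski step for the ``in particular'' clause that the paper leaves implicit.
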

\begin{proof}
Since $W$ is complemented, there exists $T : W \rightarrow \ell_p$ an onto isomorphism. Let $i : W \rightarrow \ell_p$ the inclusion, and $P : \ell_p \rightarrow W$ a bounded projection onto $W$.

Since $T \circ P : \ell_p \rightarrow \ell_p$ is bounded, by \eqref{eq:1.6} and the Riesz-Thorin Theorem for interpolation of operator spaces, $T \circ P : o(\ell_p)(\theta) \rightarrow o(\ell_p)(\theta)$ is completely bounded.

Since $i$ is a complete isometry, we get that $T = T\circ P \circ i$ is completely bounded.

For every $n \in \mathbb{N}$ and every $x \in M_n(\ell_p)$,
\[
\|T_n^{-1}(x)\|_{M_n(W)} = \|T_n^{-1}(x)\|_{M_n(o(\ell_p)(\theta))} = \|i_n \circ T_n^{-1}(x)\|_{M_n(o(\ell_p)(\theta))}
\]

Since $i \circ T^{-1} : \ell_p \rightarrow \ell_p$ is bounded, it is completely bounded from $o(\ell_p)(\theta)$ into $o(\ell_p)(\theta)$. So $T^{-1} : o(\ell_p)(\theta) \rightarrow W$ is completely bounded, and $T$ is a complete isomorphism.
\end{proof}

Let $\varphi : \mathbb{S} \rightarrow \mathbb{D}$ be a conformal map such that $\varphi(\theta) = 0$, and let $\beta = \frac{1}{\left|\varphi'(\theta)\right|}$. We shall use the following lemma, which is an easy adaptation of Lemma 2.9 of \cite{Rochberg01}.

\begin{lemma}\label{lem:4.7}
Let $(X_0, X_1)$ be a compatible pair of operator spaces, and suppose that $X_0 = X_1$ as Banach spaces. Let $n \in \mathbb{N}$ and $x, y_0 \in M_n(X_{\theta})$ $(x \neq 0)$ be such that there is $F \in \mathcal{F}$ with $F(\theta) = x$, $F'(\theta) = y_0$ and $\|F\|_{M_n(\mathcal{F})} = \|x\|_{M_n(X_{\theta})}$. Then for all $y \in M_n(X_{\theta})$ we have:
\[
\frac{1}{4}(\|x\|_{M_n(X_{\theta})} + \beta\|y - y_0\|_{M_n(X_{\theta})}) \leq \|(x, y)\|_{M_n(dX_{\theta})} \leq \|x\|_{M_n(X_{\theta})} + \beta\|y - y_0\|_{M_n(X_{\theta})}
\]
\end{lemma}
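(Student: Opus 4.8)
The plan is to work entirely at a fixed matrix level $n$, exploiting that the conformal map $\varphi$ is scalar-valued so that all the manipulations of Lemma 3.1 and Proposition 3.2 carry over verbatim to $M_n(\mathcal{F}) = \mathcal{F}(M_n(X_0), M_n(X_1))$. The two norm descriptions I will use are $\|x\|_{M_n(X_\theta)} = \inf\{\|f\| : f \in M_n(\mathcal{F}),\ f(\theta) = x\}$ and $\|(x,y)\|_{M_n(dX_\theta)} = \inf\{\|f\| : f \in M_n(\mathcal{F}),\ f(\theta) = x,\ f'(\theta) = y\}$. The single analytic fact I need is that $\varphi$ maps the boundary of $\mathbb{S}$ to the unit circle, so $\left|\varphi\right| = 1$ there and hence $\|\varphi g\|_{M_n(\mathcal{F})} = \|g\|_{M_n(\mathcal{F})}$ for every $g$; combined with $\varphi(\theta) = 0$, this is exactly the mechanism relating vanishing at $\theta$ to the value of the derivative at $\theta$, already used in Lemma 3.1.

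For the upper bound I would fix $\epsilon > 0$, choose $h \in M_n(\mathcal{F})$ with $h(\theta) = y - y_0$ and $\|h\| \leq \|y - y_0\|_{M_n(X_\theta)} + \epsilon$, and set $G = \frac{\varphi}{\varphi'(\theta)}h$. Then $G(\theta) = 0$, $G'(\theta) = h(\theta) = y - y_0$, and $\|G\| = \beta\|h\|$. The function $F + G$ satisfies $(F+G)(\theta) = x$ and $(F+G)'(\theta) = y_0 + (y - y_0) = y$, so it is an admissible interpolant for $(x,y)$ and yields $\|(x,y)\|_{M_n(dX_\theta)} \leq \|F\| + \|G\| \leq \|x\|_{M_n(X_\theta)} + \beta\|y - y_0\|_{M_n(X_\theta)} + \beta\epsilon$; letting $\epsilon \to 0$ gives the right-hand inequality. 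Note this half uses the hypothesis that $F$ is a norm-optimal interpolant of $x$ (so $\|F\| = \|x\|$) with derivative $y_0$.

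For the lower bound I take an arbitrary $H \in M_n(\mathcal{F})$ with $H(\theta) = x$ and $H'(\theta) = y$. Immediately $\|H\| \geq \|x\|_{M_n(X_\theta)}$ since $H(\theta) = x$. Next I consider $H - F$, which vanishes at $\theta$, so it factors as $H - F = \varphi K$ with $K \in M_n(\mathcal{F})$ (analyticity at $\theta$ is fine because $\varphi$ has a simple zero there and $\left|\varphi\right| = 1$ on the boundary, which also gives $\|K\| = \|H - F\|$). Differentiating at $\theta$ gives $(H-F)'(\theta) = \varphi'(\theta)K(\theta)$, i.e. $K(\theta) = (y - y_0)/\varphi'(\theta)$, whence $\|K(\theta)\|_{M_n(X_\theta)} = \beta\|y - y_0\|_{M_n(X_\theta)}$. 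Since $\|K(\theta)\|_{M_n(X_\theta)} \leq \|K\| = \|H - F\| \leq \|H\| + \|x\|_{M_n(X_\theta)} \leq 2\|H\|$, I obtain $\beta\|y - y_0\| \leq 2\|H\|$. Adding this to $\|x\| \leq \|H\|$ yields $\|x\| + \beta\|y - y_0\| \leq 3\|H\|$, and taking the infimum over $H$ gives $\|(x,y)\|_{M_n(dX_\theta)} \geq \frac{1}{3}\big(\|x\| + \beta\|y - y_0\|\big)$, which is stronger than the claimed factor $\frac14$.

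The only delicate point, and the step I would watch most carefully, is the factorization $H - F = \varphi K$: I must confirm that $K$ genuinely lies in $M_n(\mathcal{F})$ (continuity and boundedness up to the boundary, and analyticity at the removable zero $\theta$) and that the boundary identity $\left|\varphi\right| = 1$ really does give $\|K\| = \|H - F\|$ rather than merely an inequality. Everything else is the same two elementary estimates applied levelwise, with $\varphi$ and $\beta$ independent of $n$, so no uniformity issue arises and the bounds hold simultaneously for all $n$.
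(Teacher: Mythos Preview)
Your argument is correct and follows essentially the same mechanism as the paper: both exploit the extremal interpolant $F$ for $(x,y_0)$ together with the $\varphi$-factorization (equivalently, the identity $\|(0,w)\|_{M_n(dX_\theta)} = \beta\|w\|_{M_n(X_\theta)}$ from Proposition~3.2). The paper obtains the lower bound by triangle inequality in $M_n(dX_\theta)$ and a case split on whether $\beta\|y-y_0\| \le 2\|x\|$, whereas you work directly in $M_n(\mathcal{F})$ via $H-F = \varphi K$ and avoid the case distinction; this is why you end up with the slightly sharper constant $\tfrac{1}{3}$ in place of the paper's $\tfrac{1}{4}$.
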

\begin{proof}
Recall from the proof of \hyperref[prop:3.2]{Proposition 3.2} that $y \in M_n(dX_{\theta})$, $\|(0, y)\|_{M_n(dX_{\theta})} = \beta \|y\|_{M_n(X_{\theta})}$. The proof then follows from the inequality
\[
\frac{\left|\|x\|_{M_n(X_{\theta})} - \beta\|y - y_0\|_{M_n(X_{\theta})}\right|}{\|x\|_{M_n(X_{\theta})} + \beta \|y - y_0\|_{M_n(X_{\theta})}} \leq \frac{\|(x, y)\|_{M_n(dX_{\theta})}}{\|x\|_{M_n(X_{\theta})} + \beta \|y - y_0\|_{M_n(X_{\theta})}}
\]
\noindent for the case $\beta\|y - y_0\|_{M_n(X_{\theta})} > 2\|x\|_{M_n(X_{\theta})}$ and from
\[
\frac{\|x\|_{M_n(X_{\theta})}}{\|x\|_{M_n(X_{\theta})} + \beta \|y - y_0\|_{M_n(X_{\theta})}} \leq \frac{\|(x, y)\|_{M_n(dX_{\theta})}}{\|x\|_{M_n(X_{\theta})} + \beta \|y - y_0\|_{M_n(X_{\theta})}}
\]
\noindent for the case $\beta \|y - y_0\|_{M_n(X_{\theta})} \leq 2 \|x\|_{M_n(X_{\theta})}$.
\end{proof}

\begin{props}\label{prop:4.8}
Let $1 < p < \infty$, $0 < \theta < 1$, and let $X = \{(x, 0) : x \in o(\ell_p)(\theta)\} \subset do(\ell_p)(\theta)$. Then $X$ is a closed subspace of $do(\ell_p)(\theta)$ which does not contain a completely isomorphic copy of $o(\ell_p)(\theta)$.
\end{props}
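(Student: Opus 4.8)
The plan is to argue by contradiction, reducing the statement to the unbounded matricial growth of the test matrices $x^n$ from Lemmas \ref{lem:4.2}--\ref{lem:4.5}. Throughout I abbreviate the norms of $o(\ell_p)(\theta)$, of $do(\ell_p)(\theta)$ and of $X$ by $\|\cdot\|_o$, $\|\cdot\|_{do}$ and $\|\cdot\|_X$. First I would record the shape of the norm on $X$: by \hyperref[lem:4.7]{Lemma 4.7}, for every $x \in M_n(o(\ell_p)(\theta))$ admitting a norm-optimal interpolating function one has
\[
\|(x,0)\|_{M_n(do(\ell_p)(\theta))} \ \asymp\ \|x\|_{M_n(o(\ell_p)(\theta))} + \beta\,\|\Omega_n(x)\|_{M_n(o(\ell_p)(\theta))},
\]
where $\Omega_n(x) = F'(\theta)$ for such an optimal $F$ with $F(\theta) = x$. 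The content is that on $X$ the operator space structure is the one coming from the zero section, so its gap from the $o(\ell_p)(\theta)$-structure is measured entirely by the centralizer $\Omega$.

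Next I would produce the key blow-up. The function $f_n(z) = e^{(z-\theta)\log(\lambda_n^{-1})}x^n$ used in \hyperref[lem:4.5]{Lemma 4.5} is norm-optimal for $x^n$ and satisfies $f_n'(\theta) = -(\log\lambda_n)\,x^n$, so $\Omega_n(x^n) = -(\log\lambda_n)x^n$ and the displayed estimate gives
\[
\|(x^n,0)\|_{M_n(do(\ell_p)(\theta))} \ \geq\ \tfrac14\big(1 + \beta\log\lambda_n\big)\,\|x^n\|_{M_n(o(\ell_p)(\theta))}.
\]
After normalizing to $\hat x^n = x^n/\|x^n\|_{M_n(o(\ell_p)(\theta))}$ this reads $\|(\hat x^n,0)\|_{M_n(do(\ell_p)(\theta))} \gtrsim \log\lambda_n \to \infty$; that is, along the $\hat x^n$ the identification of $X$ with $o(\ell_p)(\theta)$ distorts the matricial norms without bound. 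This already shows the quotient map is not a complete isomorphism on $X$.

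To upgrade this to \emph{no subspace}, I would assume $V \subseteq X$ is completely isomorphic to $o(\ell_p)(\theta)$ via $u \colon o(\ell_p)(\theta) \to V$ and transport the $x^n$ into $V$. Setting $V_0 = p(V)$, \hyperref[lem:4.6]{Lemma 4.6} yields $W \subseteq V_0$ with $(W,\|\cdot\|_o)$ completely isomorphic to $o(\ell_p)(\theta)$, and crucially the witnessing isomorphism is induced by a bounded operator $A$ on $\ell_p$, which by \eqref{eq:1.6} and the Riesz--Thorin theorem is completely bounded on both endpoints and hence lifts to the completely bounded map $(x,y) \mapsto (Ax,Ay)$ on $do(\ell_p)(\theta)$. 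This lift preserves the zero section and is a complete isomorphism, so the matrices $w^n = A_n x^n \in M_n(W)$ inherit the same blow-up $\|(w^n,0)\|_{do} \gtrsim (\log\lambda_n)\|w^n\|_{o}$ while $\|w^n\|_o \asymp \|x^n\|_o$, and $(w^n,0) \in M_n(V)$.

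The main obstacle is the final comparison, and it is genuinely the crux. The blow-up by itself only kills the specific identification given by $p$; since $V$ being completely isomorphic to $o(\ell_p)(\theta)$ might be witnessed by a $u$ unrelated to $p$, I must show that the unbounded matricial deviation of the $X$-structure from the $o(\ell_p)(\theta)$-structure obstructs \emph{every} complete isomorphism, not just $p$. I expect to resolve this by working with the \emph{normalized} elements, so that the logarithmic growth is absolute rather than relative: pulling $(\hat w^n,0)$ back through $u$ and using that the lifted operators intertwine $\Omega$ up to a bounded additive error (a consequence of \hyperref[lem:4.7]{Lemma 4.7} applied to the optimal pair $(x,\Omega_n(x))$, whose $do$-norm is comparable to $\|x\|_o$), one is forced to make a linear map agree with $\Omega$ up to a uniform constant on the $\hat w^n$. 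This is incompatible with $\|\Omega_n(\hat w^n)\|_o \asymp \log\lambda_n \to \infty$ together with the fixed complete-isomorphism constant $\|u\|_{cb}\|u^{-1}\|_{cb}$. Carefully tracking these normalizations, together with \eqref{eq:1.2} to control the relevant entries, so that the factor $\log\lambda_n$ defeats the constant of $u$ uniformly in $n$, is where the real work lies.
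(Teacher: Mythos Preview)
The overall strategy---the lower bound via \hyperref[lem:4.7]{Lemma 4.7} on the test matrices $x^n$, together with lifting bounded $\ell_p$-operators to completely bounded maps on $do(\ell_p)(\theta)$---is correct and is exactly what the paper does. Your first three steps are fine. The final paragraph, however, contains a genuine gap.

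Your proposed closing argument via ``$\Omega$-intertwining'' does not go through as stated. The lifted operators $\tilde A$ do intertwine $\Omega$ up to bounded error, but $u$ is \emph{not} such a lift: it maps $o(\ell_p)(\theta)$ into $do(\ell_p)(\theta)$, not $do$ to $do$, so no commutator-type estimate is directly available for $u$. Concretely, complete boundedness of $u$ only yields $\|\Omega_n(u'_n x)\|_o \lesssim \|x\|_o$, while complete boundedness of $u^{-1}$ only yields $\|(u')^{-1}_n v\|_o \lesssim \|v\|_o + \|\Omega_n v\|_o$; chaining these is tautological and produces no linear map forced to agree with $\Omega$.

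The missing observation---which the paper makes explicit via \hyperref[lem:4.6]{Lemma 4.6}---is that $u' = p\circ u$ is a bounded bijection $\ell_p \to V_0$, and once you have a complemented subspace (your $W$, or the paper's $W''$), both $u'$ and $(u')^{-1}P_W$ are bounded operators on $\ell_p$. By the very homogeneity argument you invoke for $A$ (namely \eqref{eq:1.6} plus Riesz--Thorin), they are therefore completely bounded on $o(\ell_p)(\theta)$, so $u'$ is automatically a complete isomorphism for the $o$-structure. That single line closes everything: $\|(u')^{-1}_n(w^n)\|_o \lesssim \|w^n\|_o$, while on the other hand $\|(u')^{-1}_n(w^n)\|_o = \|u^{-1}_n(w^n,0)\|_o \asymp \|(w^n,0)\|_{do} \gtrsim (\log\lambda_n)\|w^n\|_o$, a contradiction. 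The paper runs this in the reverse direction---it pulls the pair $(T'_n x^n,0)$ back to $(x^n,0)$ via the lift of $(T')^{-1}P$ acting on $\mathcal{F}$, rather than pushing $x^n$ forward to $w^n$---which avoids your transport step altogether, but the substance is the same: the first coordinate $u'$ of the hypothetical complete isomorphism is forced, by endpoint homogeneity, to be a complete isomorphism on $o(\ell_p)(\theta)$, and that is incompatible with the logarithmic blow-up.
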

\begin{proof}
Let $W$ be a closed infinite dimensional subspace of $X$ and suppose that it is completely isomorphic to $o(\ell_p)(\theta)$. Clearly, $W$ is of the form
\[
W = \{(x, 0) \in do(\ell_p)(\theta) : x \in W'\}
\]

\noindent where $W'$ is a closed infinite dimensional subspace of $\ell_p$ (it is closed since $W'$ is isometric to $W$, which is complete). Take $W''$ an infinite dimensional closed subspace of $W'$ complemented in $\ell_p$ by $P : \ell_p \rightarrow W''$. By restricting $P$ to $W'$, we induce a projection from $W$ onto
\[
\{(x, 0) \in do(\ell_p)(\theta) : x \in W''\}
\]
which is bounded. In particular, since $W$ is completely isomorphic to $o(\ell_p)(\theta)$, we have by \hyperref[lem:4.6]{Lemma 4.6} that the subspace above is completely isomorphic to $o(\ell_p)(\theta)$. So we may suppose that $W'$ is complemented in $\ell_p$, and therefore completely isomorphic to $o(\ell_p)(\theta)$ with its operator space structure induced by $o(\ell_p)(\theta)$.

Since $W$ is completely isomorphic to $o(\ell_p)(\theta)$, there is a complete isomorphism $T : o(\ell_p)(\theta) \rightarrow W$, which must be of the form $T(x) = (T'(x), 0)$, for $T' : \ell_p \rightarrow W'$ an onto isomorphism.

By the proof of \hyperref[lem:4.6]{Lemma 4.6}, $T'$ is a complete isomorphism between $o(\ell_p)(\theta)$ and $W'$ with its operator space structure inherited from $o(\ell_p)(\theta)$.

Let $f \in M_n(\mathcal{F}(\min(\ell_p), \max(\ell_p)))$ be such that $f(\theta) = T'_n(x^n)$, $f'(\theta) = 0$. Notice that if $P : \ell_p \rightarrow W'$ is a bounded projection, $\|P_nf\| \leq \|P\|\|f\|$, by \eqref{eq:1.6}.

Let $g = (T'_n)^{-1} \circ P_n f$. Then $g \in M_n(\mathcal{F}(\min(\ell_p), \max(\ell_p)))$, $g(\theta) = x^n$, $g'(\theta) = 0$, and
\[
\|g\| \leq \|(T')^{-1}\|_{cb}\|P\|\|f\|
\]

Since $f$ was arbitrary and $T$ is completely bounded, we obtain constants $C_1$ and $C_2$ such that for every $n \in \mathbb{N}$:
\begin{equation}\label{eq:4.2}
\|(x^n, 0)\|_{M_n(do(\ell_p)(\theta))} \leq C_1 \|(T'_n x^n, 0)\|_{M_n(do(\ell_p)(\theta))} \leq C_2 \|x^n\|_{M_n(o(\ell_p)(\theta))}
\end{equation}

By \hyperref[lem:4.7]{Lemma 4.7}, if we let $\lambda_n$ be as in the proof of \hyperref[lem:4.5]{Lemma 4.5}, i.e.,
\[
\lambda_n = \begin{cases} n^{1 - \frac{1}{p}}, & \mbox{if } 1 < p \leq 2 \\ 
n^{\frac{1}{p}}, & \mbox{if } p \geq 2 \end{cases}
\]

\noindent we have
\[
\|(x^n, 0)\|_{M_n(do(\ell_p)(\theta))} \geq \frac{1}{4}(1 + \beta \log \lambda_n) \|x^n\|_{M_n(o(\ell_p)(\theta))}
\]

Since $\lambda_n$ is not bounded in $n$, we get a contradiction with \eqref{eq:4.2}.
\end{proof}

Notice that for $p = 1$ we would have $\lambda_n = 1$ for every $n$, and the proof does not work.

\begin{lemma}\label{lem:4.9}
The space $o\ell_p(\theta)$ is completely isomorphic to $o\ell_p(\theta) \oplus o\ell_p(\theta)$, $p \in [1, \infty]$, $\theta \in [0, 1]$.
\end{lemma}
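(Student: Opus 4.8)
The plan is to realize the isomorphism $o\ell_p(\theta) \cong o\ell_p(\theta) \oplus o\ell_p(\theta)$ through the internal splitting of $\ell_p$ into its odd and even coordinate subspaces, and then to upgrade the resulting Banach space isomorphism to a complete isomorphism using the homogeneity of $\min$ and $\max$ together with Riesz-Thorin, exactly as in the proof of \hyperref[lem:4.6]{Lemma 4.6}. Concretely, I would introduce the norm-one maps $R_1, R_2 : \ell_p \to \ell_p$ that compress onto the odd, respectively even, coordinates and reindex the result back onto $\ell_p$, together with their right inverses $S_1, S_2 : \ell_p \to \ell_p$ (reindex and include), so that
\[
R_1 S_1 = R_2 S_2 = Id_{\ell_p}, \quad R_1 S_2 = R_2 S_1 = 0, \quad S_1 R_1 + S_2 R_2 = Id_{\ell_p}.
\]
Using these I define $\Phi : o\ell_p(\theta) \to o\ell_p(\theta) \oplus o\ell_p(\theta)$ and $\Psi : o\ell_p(\theta) \oplus o\ell_p(\theta) \to o\ell_p(\theta)$ by
\[
\Phi(x) = (R_1 x, R_2 x), \qquad \Psi(a, b) = S_1 a + S_2 b,
\]
and the identities above give $\Psi \Phi = Id$ and $\Phi \Psi = Id$ immediately, so $\Phi$ is a Banach space isomorphism with inverse $\Psi$.

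It then remains to check that both $\Phi$ and $\Psi$ are completely bounded. The key observation, which is precisely the mechanism behind \hyperref[lem:4.6]{Lemma 4.6}, is that every bounded operator $T : \ell_p \to \ell_p$ is completely bounded as a map $o\ell_p(\theta) \to o\ell_p(\theta)$: by \eqref{eq:1.6} it is completely bounded on both $\min(\ell_p)$ and $\max(\ell_p)$, and the Riesz-Thorin theorem for interpolation of operator spaces then gives complete boundedness on the interpolation space (at the endpoints $\theta \in \{0, 1\}$ homogeneity alone suffices). In particular $R_1, R_2, S_1, S_2$ are all completely bounded on $o\ell_p(\theta)$. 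Since the operator space structure on the $\infty$-direct sum is given by $\|(a, b)\|_{M_n} = \max\{\|a\|, \|b\|\}$, the coordinate projections are complete contractions and $\|\Phi_n(v)\| = \max\{\|(R_1)_n v\|, \|(R_2)_n v\|\} \leq \max_j \|R_j\|_{cb}\|v\|$, so $\Phi$ is completely bounded; and writing $\Psi = S_1 \circ \pi_1 + S_2 \circ \pi_2$ with $\pi_j$ the coordinate projections, $\Psi$ is a sum of completely bounded maps and hence completely bounded.

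Together these show that $\Phi$ is a complete isomorphism, proving the lemma for all $p \in [1, \infty]$ and $\theta \in [0, 1]$. I expect the only real point requiring care to be the passage from \emph{bounded} to \emph{completely bounded}: the whole content of the statement lies in the fact that the external $\infty$-direct sum structure is matched at the matrix level, and this is guaranteed entirely by the homogeneity principle \eqref{eq:1.6} and Riesz-Thorin rather than by any finer analysis of $do(\ell_p)(\theta)$. No boundary case causes trouble, since the maps $R_j, S_j$ are bounded on $\ell_p$ for the whole range $1 \leq p \leq \infty$ and the homogeneity argument is insensitive to the value of $p$.
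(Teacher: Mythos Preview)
Your proposal is correct and follows essentially the same route as the paper: the paper's proof is the one-line observation that the odd/even splitting map $T(e_{2n}) = (e_n,0)$, $T(e_{2n+1}) = (0,e_n)$ is a complete isomorphism by \eqref{eq:1.6} and Riesz--Thorin, which is exactly your $\Phi$. Your write-up is more careful in that you factor $\Phi$ and $\Psi$ through the self-maps $R_j, S_j : \ell_p \to \ell_p$ so that Riesz--Thorin is only ever applied to endomorphisms of a single interpolation scale, avoiding the need to identify $(\min(\ell_p)\oplus\min(\ell_p),\max(\ell_p)\oplus\max(\ell_p))_\theta$ directly; this is a welcome clarification but not a genuinely different argument.
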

\begin{proof}
Using \eqref{eq:1.6} and the Riesz-Thorin Theorem, one shows that $T : \ell_p \rightarrow \ell_p \oplus \ell_p$ given by $T(e_{2n}) = (e_n, 0)$, $T(e_{2n+1}) = (0, e_n)$ is a complete isomorphism between $o\ell_p(\theta)$ and $o\ell_p(\theta) \oplus o\ell_p(\theta)$.
\end{proof}

We are now ready to prove \hyperref[teo:4.1]{Theorem 4.1}:
\begin{proof}
We have the complete twisted sum
\begin{equation*}
\xymatrix{
0 \ar[r] & o(\ell_p)(\theta) \ar[r] & do(\ell_p)(\theta) \ar[r] & o(\ell_p)(\theta) \ar[r] & 0}
\end{equation*}

Let $W$ be a closed infinite dimensional subspace of $o(\ell_p)(\theta)$ such that the induced complete twisted sum
\begin{equation*}
\xymatrix{
0 \ar[r] & o(\ell_p)(\theta) \ar[r] & Z \ar[r] & W \ar[r] & 0}
\end{equation*}

\noindent is completely trivial. By taking a further subspace, we may suppose $W$ complemented in $\ell_p$, and therefore completely isomorphic to $o(\ell_p)(\theta)$. In particular, $Z$ is completely isomorphic to $o(\ell_p)(\theta) \oplus o(\ell_p)(\theta)$, which is completely isomorphic to $o(\ell_p)(\theta)$, by \hyperref[lem:4.9]{Lemma 4.9}.

But $Z = \{(x, y) \in do(\ell_p)(\theta) : x \in W\}$, which by \hyperref[prop:4.8]{Proposition 4.8} has an infinite dimensional closed subspace that has no completely isomorphic copy of $o\ell_p(\theta)$. However, this cannot happen, by the last part of \hyperref[lem:4.6]{Lemma 4.6}. Therefore, the extension sequences \eqref{eq:4.1} are completely singular.

Finally, by \hyperref[teo:3.4]{Theorem 3.4}, for $\frac{1}{p} + \frac{1}{q} = 1$ we have a complete equivalence
  \[
  \xymatrix{ 0 \ar[r] & o\ell_q(1-\theta) \ar[r]\ar@{=}[d] & do\ell_q(1-\theta)
    \ar[r]\ar[d] & o\ell_q(1-\theta)\ar[r] \ar@{=}[d] &0 \\ 0 \ar[r] & (o\ell_p(\theta))^*
    \ar[r] & (do\ell_p(\theta))^* \ar[r]& (o\ell_p(\theta))^*\ar[r] &0}
  \]
  
Since the quotient map of the first line is c.s.s., by \hyperref[prop:2.14]{Proposition 2.14} so is the quotient map of the second line. By \hyperref[prop:2.15]{Proposition 2.15}, item (2), the inclusion map from $o\ell_p(\theta)$ into $do\ell_p(\theta)$ is c.s.c., and the proof is complete.
\end{proof}

In particular, we have solved the isomorphic version of Palais' problem for operator spaces:
\begin{theorem}\label{teo:4.10}
There is an operator space $X$ isomorphic to $\ell_2$ which is not completely isomorphic to $OH$ and has a completely isometric copy of $OH$ with respective quotient also completely isometric to $OH$.
\end{theorem}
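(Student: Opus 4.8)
The plan is to specialize \hyperref[teo:4.1]{Theorem 4.1} to $p = 2$, $\theta = \frac{1}{2}$ and to take $X = do(\ell_2)$ as the witness. Since $o(\ell_2)(\frac{1}{2}) = (\min(\ell_2), \max(\ell_2))_{\frac{1}{2}} = OH$ completely isometrically, the sequence \eqref{eq:4.1} specializes to the extension sequence
\[
\xymatrix{0 \ar[r] & OH \ar[r]^{j} & do(\ell_2) \ar[r]^{p} & OH \ar[r] & 0}
\]
First I would extract from it the completely isometric data required by the statement. The inclusion $j(y) = (0, y)$ scales all matrix norms by the fixed constant $\beta$ (this is the identity $\|j_n(y)\| = \beta\|y\|$ established in the proof of \hyperref[prop:3.2]{Proposition 3.2}), so $y \mapsto (0, \beta^{-1} y)$ is a complete isometry of $OH$ onto $j(OH)$; and the same proof, together with the remark following \hyperref[prop:2.17]{Proposition 2.17}, shows that $p$ induces a complete isometry $do(\ell_2)/j(OH) \cong OH$. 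This gives the subspace and the quotient completely isometric to $OH$.

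Next I would verify that $X$ is isomorphic to $\ell_2$ as a Banach space. Because $\min(\ell_2)$ and $\max(\ell_2)$ are both isometric to $\ell_2$, the constant interpolating function $f(z) \equiv x$ gives $\|(x, 0)\|_{do(\ell_2)} \leq \|x\|$, so $x \mapsto (x, 0)$ is a norm one linear section of $p$ at the Banach level. Hence the underlying Banach twisted sum splits and $X$ is isomorphic as a Banach space to $OH \oplus OH$, hence to $\ell_2$.

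The crux, and the only place demanding real work, is to show that $X = do(\ell_2)$ is \emph{not} completely isomorphic to $OH$; the essential difficulty is already isolated in \hyperref[prop:4.8]{Proposition 4.8}. I would argue by contradiction: assume $\Phi : do(\ell_2) \to OH$ is a complete isomorphism, and consider the closed infinite dimensional subspace $X_0 = \{(x, 0) : x \in OH\}$ (infinite dimensional because $\|(x, 0)\| = \|x\|$ makes it Banach isometric to $OH$). Then $\Phi$ restricts to a complete isomorphism of $X_0$ onto the closed infinite dimensional subspace $\Phi(X_0) \subseteq OH$. Since every closed infinite dimensional subspace of $OH$ is completely isometric to $OH$ \cite{Pisier03}, this forces $X_0$ to be completely isomorphic to $OH$, and in particular to contain a completely isomorphic copy of $OH$ --- contradicting \hyperref[prop:4.8]{Proposition 4.8}. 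Therefore $X$ is not completely isomorphic to $OH$, which settles the isomorphic Palais problem. In short, the hard analysis is entirely contained in \hyperref[teo:4.1]{Theorem 4.1} and \hyperref[prop:4.8]{Proposition 4.8}; what remains is to assemble these with the homogeneity of $OH$ and the routine verifications of the subspace, the quotient, and the Banach isomorphism type.
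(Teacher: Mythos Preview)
Your proposal is correct and follows essentially the same approach as the paper: Theorem~\ref{teo:4.10} is presented there as an immediate consequence (``In particular, \dots'') of the work culminating in Theorem~\ref{teo:4.1}, with the witness $X = do(\ell_2)$ at $p=2$, $\theta=\tfrac12$. Your write-up simply makes explicit the details the paper leaves implicit: the completely isometric subspace and quotient from Proposition~\ref{prop:3.2}, the Banach-level triviality via the constant interpolating function, and the non--complete-isomorphism with $OH$ via Proposition~\ref{prop:4.8} together with the fact that closed infinite dimensional subspaces of $OH$ are completely isometric to $OH$.
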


Following the classical setting, we say that a property $P$ of operator spaces is a \textit{complete 3-space property} ($C3SP$ for short) if whenever we have an extension sequence
  \[
  \xymatrix{ 0 \ar[r] & Y \ar[r] & X
    \ar[r] & Z\ar[r]  &0}
  \]
  
\noindent where $Y$ and $Z$ have $P$, then $X$ has $P$.

Let $Y$ be a Banach space. A Banach space $X$ is said to be $Y-$\textit{saturated} if every closed infinite dimensional subspace of $X$ has an isomorphic copy of $Y$. Being $\ell_p-$saturated is a $3SP$ (\cite{Castillo02}, Theorem 3.2.d). We may analogously define the property of being $Y-$\textit{completely saturated}, where $Y$ is an operator space. Notice that \hyperref[lem:4.6]{Lemma 4.6} essentially says that
\begin{props}\label{prop:4.11}
For each $p \in [1, \infty)$, and $\theta \in (0,1)$, the space $o\ell_p(\theta)$ is $o\ell_p(\theta)-$completely saturated.
\end{props}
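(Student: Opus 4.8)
The plan is to deduce the proposition directly from the second assertion of \hyperref[lem:4.6]{Lemma 4.6} together with the classical structure theory of the Banach space $\ell_p$. Recall that being $o\ell_p(\theta)$-completely saturated means that every closed infinite dimensional subspace of $o\ell_p(\theta)$ contains a further subspace that is completely isomorphic to $o\ell_p(\theta)$; this is precisely what the ``in particular'' clause of \hyperref[lem:4.6]{Lemma 4.6} records, so the task is essentially to assemble the ingredients explicitly.

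First I would fix a closed infinite dimensional subspace $V \subseteq o\ell_p(\theta)$. Since $o\ell_p(\theta) = (\min(\ell_p), \max(\ell_p))_\theta$ is isometric as a Banach space to $\ell_p$, the space $V$ is, at the Banach space level, a closed infinite dimensional subspace of $\ell_p$. Here I would invoke the classical fact (a consequence of the Bessaga--Pe\l czy\'nski selection principle for $1 \leq p < \infty$) that every such subspace contains a further closed infinite dimensional subspace $W$ which is isomorphic to $\ell_p$ and complemented in $\ell_p$: one extracts from a normalized basic sequence in $V$ a block basic sequence equivalent to the canonical unit vector basis of $\ell_p$, whose closed linear span is complemented by the natural block projection.

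With $W$ in hand, \hyperref[lem:4.6]{Lemma 4.6} applies verbatim: $W$, equipped with the operator space structure inherited from $o\ell_p(\theta)$, is completely isomorphic to $o\ell_p(\theta)$. The one point to check is that the operator space structure $W$ receives as a subspace of $V$ (itself a subspace of $o\ell_p(\theta)$) coincides with the structure it receives directly as a subspace of $o\ell_p(\theta)$; this is immediate, since in both cases the matrix norms on $M_n(W)$ are just the restrictions of the norms on $M_n(o\ell_p(\theta))$. Hence $V$ contains the subspace $W$, which is completely isomorphic to $o\ell_p(\theta)$, and the proposition follows.

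I do not expect a genuine obstacle here: the real content is already distributed between the classical $\ell_p$-saturation result and \hyperref[lem:4.6]{Lemma 4.6}, whose proof encapsulated the interaction between the projection, the Riesz--Thorin theorem for operator spaces, and the homogeneity of $\min$ and $\max$ expressed in \eqref{eq:1.6}. If anything, the only subtlety worth flagging is the transitivity of the inherited operator space structure noted above, which guarantees that passing through the intermediate subspace $V$ does not alter the complete isomorphism class of $W$.
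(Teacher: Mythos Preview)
Your proposal is correct and matches the paper's approach exactly: the paper does not give a separate proof of this proposition but simply observes that it is the content of the ``in particular'' clause of \hyperref[lem:4.6]{Lemma 4.6}, and your write-up merely unpacks that clause by making the Bessaga--Pe\l czy\'nski step and the transitivity of the inherited operator space structure explicit.
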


However, as a consequence of \hyperref[prop:4.8]{Proposition 4.8}, we get
\begin{props}\label{prop:4.12}
For each $p \in (1, \infty)$, and $\theta \in (0,1)$, being $o\ell_p(\theta)-$completely saturated is not a C3SP. In particular, being $OH-$completely saturated is not a C3SP.
\end{props}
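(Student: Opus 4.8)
The plan is to use the extension sequence \eqref{eq:4.1} itself as the single explicit counterexample that violates the definition of a complete 3-space property. The key point is that essentially all of the work has already been done: the present statement is a repackaging of \hyperref[prop:4.8]{Proposition 4.8} and \hyperref[prop:4.11]{Proposition 4.11}. To exhibit that a property $P$ fails to be a C3SP, it suffices to produce one extension sequence whose outer terms $Y$ and $Z$ both have $P$ while the middle term $X$ does not, and the sequence \eqref{eq:4.1} is precisely such a witness.

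First I would observe that the two outer terms of \eqref{eq:4.1} both equal $o\ell_p(\theta)$, and that by \hyperref[prop:4.11]{Proposition 4.11} the space $o\ell_p(\theta)$ is $o\ell_p(\theta)$-completely saturated. Thus the hypothesis in the definition of a C3SP, namely that $Y$ and $Z$ enjoy the property of being $o\ell_p(\theta)$-completely saturated, is satisfied for \eqref{eq:4.1}.

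Next I would show that the middle term $do\ell_p(\theta)$ fails to be $o\ell_p(\theta)$-completely saturated. For this one considers the closed subspace $X = \{(x,0) : x \in o\ell_p(\theta)\} \subset do\ell_p(\theta)$, which is infinite dimensional since it is linearly isomorphic to $\ell_p$. By \hyperref[prop:4.8]{Proposition 4.8}, $X$ contains no completely isomorphic copy of $o\ell_p(\theta)$. Hence $do\ell_p(\theta)$ possesses a closed infinite dimensional subspace with no completely isomorphic copy of $o\ell_p(\theta)$, which is exactly the negation of being $o\ell_p(\theta)$-completely saturated. Combining the two steps, the extension sequence \eqref{eq:4.1} has $o\ell_p(\theta)$-completely saturated outer terms but a middle term that is not $o\ell_p(\theta)$-completely saturated, so the property does not pass from the ends to the middle and is therefore not a C3SP. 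The final assertion follows by specializing to $p=2$ and $\theta = \frac{1}{2}$, for which $o\ell_2(\frac{1}{2}) = OH$.

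I do not expect any genuine obstacle at this stage, since the real difficulty was already absorbed into \hyperref[prop:4.8]{Proposition 4.8}, whose proof rests on the norm computations of Lemmas \hyperref[lem:4.2]{4.2}--\hyperref[lem:4.5]{4.5} together with the two-sided estimate of \hyperref[lem:4.7]{Lemma 4.7} and the unboundedness of $\log \lambda_n$. The only matter that requires a moment of care is verifying that the subspace $X$ offered as a witness is admissible, i.e. genuinely infinite dimensional and closed; both are immediate, as $X$ is completely isometric to the copy of $o\ell_p(\theta)$ sitting in the first coordinate of $do\ell_p(\theta)$.
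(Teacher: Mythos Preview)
Your approach is correct and is exactly the one the paper takes: the paper simply records Proposition~4.12 ``as a consequence of \hyperref[prop:4.8]{Proposition 4.8}'', and you have unpacked this by exhibiting the extension sequence \eqref{eq:4.1}, invoking \hyperref[prop:4.11]{Proposition 4.11} for the outer terms and \hyperref[prop:4.8]{Proposition 4.8} for the failure in the middle term.

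One slip in your final paragraph: you write that $X=\{(x,0):x\in o\ell_p(\theta)\}$ is ``completely isometric to the copy of $o\ell_p(\theta)$ sitting in the first coordinate''. This is false, and indeed is precisely the content of \hyperref[prop:4.8]{Proposition 4.8} that $X$ contains \emph{no} completely isomorphic copy of $o\ell_p(\theta)$; in particular $X$ itself cannot be completely isometric to $o\ell_p(\theta)$. What is true is that $X$ is isometric to $\ell_p$ at the Banach level (since the underlying Banach twisted sum is trivial, cf.\ \hyperref[lem:4.7]{Lemma 4.7} at $n=1$), which is enough to see it is closed and infinite dimensional; alternatively, closedness is already asserted in the statement of \hyperref[prop:4.8]{Proposition 4.8}, so you need not re-justify it. This does not affect your main argument.
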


We do not know if \hyperref[prop:4.12]{Proposition 4.12} is true for $p = 1$.

One may wonder why the proof that being $\ell_p-$saturated does not pass to the quantum case. For the proof presented in \cite{Castillo02}, the breaking point is the following: let $N$ and $M$ be closed subspaces of a Banach space $X$. If $N + M$ is closed, then we have the isomorphic identification:
\[
\frac{N + M}{N} \cong \frac{M}{M \cap N}
\]

If we let $X$ be as in \hyperref[prop:4.8]{Proposition 4.8} and $Y = \{(0, y) \in do\ell_p(\theta) : y \in o\ell_p(\theta)\}$, then $(X+Y)/Y = o\ell_p(\theta)$ completely isomorphically, but \hyperref[prop:4.8]{Proposition 4.8} tells us that $X = X/(X\cap Y)$ is not completely isomorphic to $o\ell_p(\theta)$. That is, the second isomorphism theorem does not need to hold for operator spaces, even when it holds in the Banach setting (which is probably already known).

Recall that the space $\ell_p$ has a natural operator space structure, $O\ell_p$ \hyperref[sec:3.2]{(Section 3.2)}.

\begin{question}\label{qu:4.13}
Are the spaces $O\ell_p$, with their natural operator space structure, $O\ell_p-$completely saturated, $1 \leq p < \infty$, $p \neq 2$?
\end{question}

\begin{question}\label{qu:4.14}
Is being $O\ell_p-$completely saturated a C3SP?
\end{question}

\subsection{The isometric Palais' problem for operator spaces}\label{sec:4.2}

Having solved the isomorphic version of Palais' problem for operator spaces, it is easy to obtain a solution to the isometric version. We thank Gilles Pisier for pointing out how one can obtain a complete extension of $OH$ which is isometric to a Hilbert space.

Let $z_0 \in \mathbb{S}^o$ and $\mu_{z_0}$ be the harmonic measure on $\partial\mathbb{S}$ with respect to $z_0$. Let $\mathbb{S}_j = \{z \in \mathbb{C} : Re(z) = j\}$ and $\mu_{z_0}^j$ be the probability measure defined on $\mathbb{S}_j$ by $\mu_{z_0}$, $j = 0, 1$.

Given a compatible couple $\overline{X} = (X_0, X_1)$ of Banach spaces, $0 < \theta < 1$ and $p \in [1, \infty)$, let $\mathcal{F}_p = \mathcal{F}_p(\overline{X})$ be the space of functions $f : \mathbb{S} \rightarrow \Sigma(\overline{X})$ such that:
\begin{itemize}
	\item[P1] $f$ is analytic on $\mathbb{S}^o$;
    \item[P2] $f|_{\mathbb{S}_j} \in L_p(S_j, \mu_{\theta}^j, X_j)$, $j = 0, 1$;
    \item[P3] For every $z_0 \in \mathbb{S}^o$, we have
    	\[
        f(z_0) = \int\limits_{\partial\mathbb{S}} f(z) dP_{z_0}(z)
        \]
    \item[P4] We have
    	\[
        \|f\|_p^p = (1 - \theta) \int\limits_{\mathbb{S}_0} \|f(z)\|_{X_0}^p d\mu_{\theta}^0(z) + \theta \int\limits_{\mathbb{S}_1} \|f(z)\|_{X_1}^p d\mu_{\theta}^1(z) < \infty
        \]
\end{itemize}

Then $\mathcal{F}_p$ is a Banach space and if we still denote by $\delta_{\theta}$ the evaluation at $\theta$, then we have isometrically $X_{\theta} = \mathcal{F}_p / \ker(\delta_{\theta})$ (this is a consequence of Theorem 8.24 of \cite{Pisier2016}).

Let $E$ and $F$ be operator spaces. The inclusion $E \oplus_1 F \subset (E^* \oplus_{\infty} F^*)^*$ induces an operator space structure on the space $E \oplus_1 F$. Using interpolation, the operator space structure on the space $E \oplus_p F$ for $1 < p < \infty$ may be defined as
\[
E \oplus_p F = (E \oplus_{\infty} F, E \oplus_1 F)_{\eta}
\]
\noindent where $\eta = \frac{1}{p}$ (see \cite{Pisier05}).

If $X_0$ and $X_1$ are operator spaces, then $\mathcal{F}_p$ has a natural operator space structure induced by the natural inclusion
\[
\mathcal{F}_p \subset (1-\theta)L_p(\mathbb{S}_0, \mu_{\theta}^0, X_0) \oplus_p \theta L_p(\mathbb{S}_1, \mu_{\theta}^1, X_1)
\]

It was communicated to the author by Gilles Pisier that he and Yanqi Qiu were able to prove that the identity $X_{\theta} = \mathcal{F}_p/\ker(\delta_{\theta})$ is actually a complete isometry. We present their proof of this fact here with their authorization.

\begin{theorem}[G. Pisier, Y. Qiu]\label{teo:4.15}
Let $\overline{X} = (X_0, X_1)$ be a compatible couple of operator spaces and let $0 < \theta < 1$. Then we have completely isometrically $X_{\theta} = \mathcal{F}_p/\ker(\delta_{\theta})$.
\end{theorem}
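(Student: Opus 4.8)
The plan is to prove the identity at each matrix level separately. Fix $n$ and write $\mathcal{L}_p(\overline{X})$ for the ambient space $(1-\theta)L_p(\mathbb{S}_0,\mu_{\theta}^0,X_0)\oplus_p\theta L_p(\mathbb{S}_1,\mu_{\theta}^1,X_1)$, so that $M_n(\mathcal{F}_p(\overline{X}))\subset M_n(\mathcal{L}_p(\overline{X}))$ defines the operator space structure. By the definition of the quotient and of the interpolation operator space structure, $M_n(\mathcal{F}_p/\ker\delta_{\theta})=M_n(\mathcal{F}_p)/\ker(\delta_{\theta})_n$ and $M_n(X_{\theta})=(M_n(X_0),M_n(X_1))_{\theta}$. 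Using \eqref{eq:1.2}, a matrix of functions in $\mathcal{F}_p(\overline{X})$ is exactly an $M_n(\Sigma(\overline{X}))$-valued function satisfying P1--P4 entrywise, so the whole statement reduces to the two inequalities $\|F(\theta)\|_{M_n(\mathcal{F}_p/\ker\delta_{\theta})}\le\|F(\theta)\|_{M_n(X_{\theta})}$ and $\|F(\theta)\|_{M_n(X_{\theta})}\le\|F\|_{M_n(\mathcal{F}_p)}$ for $F\in M_n(\mathcal{F}_p(\overline{X}))$; that is, to showing that $\delta_{\theta}\colon\mathcal{F}_p\to X_{\theta}$ is a complete metric surjection.

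One of these inequalities comes for free. Since $\mu_{\theta}^j$ are probability measures, a bounded analytic function has finite $L_p$ boundary norm and satisfies the reproducing property, so the inclusion $\mathcal{F}(\overline{X})\subset\mathcal{F}_p(\overline{X})$ is completely contractive (because $L_{\infty}(\mu;X)\hookrightarrow L_p(\mu;X)$ and $\oplus_{\infty}\hookrightarrow\oplus_p$ are completely contractive for a probability measure). As the sup-norm space already gives $X_{\theta}=\mathcal{F}(\overline{X})/\ker\delta_{\theta}$ completely isometrically, composing produces a completely contractive identity map $X_{\theta}\to\mathcal{F}_p/\ker\delta_{\theta}$, which is precisely the first inequality.

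The substance of the theorem is the reverse inequality, the complete contractivity of $\delta_{\theta}$ for the $\mathcal{F}_p$-structure. Here one cannot argue entrywise, because the operator-space valued $L_p$ matrix norm is genuinely noncommutative (for $p=2$ it is of $OH$ type, not the Bochner $\ell_2$ norm). Instead I would mimic the proof of \hyperref[teo:3.4]{Theorem 3.4}: given a dual matrix $\Xi$ in the unit ball of $M_m((M_n(X_0),M_n(X_1))_{\theta}^{*})$, represented after a density reduction (as in Lemma 4.2.3 of \cite{Bergh01}) as $g(\theta)$ with $g\in M_m(\mathcal{F}_{p'}(\overline{X^{*}}))$ of norm $\le 1+\epsilon$ for the conjugate exponent $p'$, form the analytic $M_{nm}$-valued function $H(z)=(\langle F_{ij}(z),g_{kl}(z)\rangle)$. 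The reproducing property P3 gives $H(\theta)=\int_{\partial\mathbb{S}}H\,d\mu_{\theta}$, and since the harmonic measure puts masses $1-\theta$ and $\theta$ on $\mathbb{S}_0$ and $\mathbb{S}_1$, the right-hand side is exactly the pairing of $F$ and $g$ in the duality $L_p(\mu;X)^{*}=L_{p'}(\mu;X^{*})$. Hence $\|H(\theta)\|_{M_{nm}}\le\|F\|_{M_n(\mathcal{L}_p)}\,\|g\|_{M_m(\mathcal{L}_{p'})}\le(1+\epsilon)\|F\|_{M_n(\mathcal{F}_p)}$, and taking the supremum over $\Xi$ and letting $\epsilon\to0$ yields the claim. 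Note that, unlike in Theorem 3.4, only the value $H(\theta)$ (not its derivative) is needed, so the noncommutative Schwarz--Pick estimate of \hyperref[lem:3.3]{Lemma 3.3} is not required here.

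The hard part will be this last step, and the difficulty is twofold. First, one must have the \emph{completely} isometric duality $L_p(\mu;X)^{*}=L_{p'}(\mu;X^{*})$ together with the complete contractivity of the matrix trace pairing in order to justify $\|H(\theta)\|_{M_{nm}}\le\|F\|_{M_n(\mathcal{L}_p)}\|g\|_{M_m(\mathcal{L}_{p'})}$; this is exactly where the non-Bochner nature of the operator-space $L_p$ enters and replaces any naive pointwise estimate. Second, because Theorem 4.15 assumes no reflexivity, I must verify that the functionals coming from the dual couple still norm $(M_n(X_0),M_n(X_1))_{\theta}$ \emph{completely}, so that representing $\Xi$ by some $g\in M_m(\mathcal{F}_{p'}(\overline{X^{*}}))$ really suffices; I expect this to follow from the density of $\Delta(\overline{X})$- and $\Delta(\overline{X^{*}})$-valued functions and the fundamental norming property of the dual couple, exactly as in the Banach statement $X_{\theta}=\mathcal{F}_p/\ker\delta_{\theta}$ (Theorem 8.24 of \cite{Pisier2016}), of which this is the matrix refinement. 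Once these two points are settled, combining the two inequalities completes the proof.
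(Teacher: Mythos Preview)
Your argument takes a genuinely different route from the paper's, and the gap you flag at the end is real rather than cosmetic.

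The paper's proof does not use duality at all. Instead it exploits the $S_p^n$-characterization of complete isometries (Lemma~1.7 of \cite{Pisier05}): a map is a complete isometry if and only if $Id_{S_p^n}\otimes u$ is an isometry for every $n$. One then applies the \emph{scalar} Banach result $X_{\theta}=\mathcal{F}_p/\ker\delta_{\theta}$ to the couple $(S_p^n[X_0],S_p^n[X_1])$, and unwinds via two Fubini-type identities from \cite{Pisier05}: $L_p(\mu;S_p^n[X_j])=S_p^n[L_p(\mu;X_j)]$ isometrically (Proposition~2.1(ii)) and the compatibility of $S_p^n[\,\cdot\,]$ with $\oplus_p$ (formula (2.10)). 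This reduces the complete isometry to a sequence of ordinary Banach isometries, with no hypothesis on $X_0,X_1$ beyond those already needed for the scalar statement.

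Your plan, by contrast, works at the $M_n$-level and hinges on norming $M_n(X_{\theta})$ by functionals represented as $g(\theta)$ with $g\in M_m(\mathcal{F}_{p'}(\overline{X^{*}}))$. But Theorem~4.15 carries \emph{no} assumption that $\Delta(\overline{X})$ is dense in $X_0$ and $X_1$, nor any reflexivity; without these you cannot form the dual couple $(X_0^{*},X_1^{*})$ as a compatible pair, let alone assert that it norms $X_{\theta}$ (even at the Banach level this already requires the hypotheses of Theorem~2.7.1 in \cite{Bergh01}). The ``density'' hope you mention would in effect reprove completely isometric duality for interpolation, which is a stronger statement than the one you are after. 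The $S_p^n$ approach sidesteps this entirely: it trades the noncommutative $M_n$-norm, which is ill-suited to $L_p$-type structures, for the $S_p^n$-norm, which commutes with $L_p$ and with $\oplus_p$ on the nose. That commutation is what makes the proof a three-line reduction to the known scalar case.
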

\begin{proof}
Let $S_p^n$ be the $p-$Schatten class of dimension $n$. In \cite{Pisier05}, given an operator space $X$, it is defined the operator space $S_p^n[X]$. By Lemma 1.7 of \cite{Pisier05}, an operator $u : X \rightarrow Y$ is a complete isometry if and only if $Id_{S_p^n} \otimes u : S_p^n[X] \rightarrow S_p^n[Y]$ is an isometry for every $n$.

By Corollary 1.4 of \cite{Pisier05} we have isometrically:
\[
(S_p^n[X_0], S_p^n[X_1])_{\theta} = S_p^n[X_{\theta}]
\]

Therefore, for $x \in S_p^n[X_{\theta}]$,
\[
\|x\|_{S_p^n[X_{\theta}]} = \inf\{((1-\theta)\|f|_{\mathbb{S}_0}\|^p_{L_p(\mathbb{S}_0, \mu_{\theta}^0, S_p^n[X_0])} + \theta \|f|_{\mathbb{S}_1}\|^p_{L_p(\mathbb{S}_1, \mu_{\theta}^1, S_p^n[X_1])})^{\frac{1}{p}}\}
\]

\noindent where the infimum is over all $f \in \mathcal{F}_p(S_p^n[X_0], S_p^n[X_1])$ such that $f(\theta) = x$.

By Proposition 2.1 (ii) of \cite{Pisier05},
\[
\|x\|_{S_p^n[X_{\theta}]} = \inf\{((1-\theta)\|f|_{\mathbb{S}_0}\|^p_{S_p^n[L_p(\mathbb{S}_0, \mu_{\theta}^0, X_0)]} + \theta \|f|_{\mathbb{S}_1}\|^p_{S_p^n[L_p(\mathbb{S}_1, \mu_{\theta}^1, X_1)]})^{\frac{1}{p}}\}
\]

Finally, by (2.10) of \cite{Pisier05},
\[
\|x\|_{S_p^n[X_{\theta}]} = \inf\{\|f\|_{S_p^n[\mathcal{F}_p]} : f(\theta) = x\}
\]

Therefore, $X_{\theta} = \mathcal{F}_p/\ker(\delta_{\theta})$ completely isometrically.
\end{proof}

\begin{lemma}\label{lem:4.16}
Let $\varphi : \mathbb{S} \rightarrow \mathbb{D}$ be a conformal equivalence with $\varphi(\theta) = 0$. Then the operator $T_{\varphi} : \mathcal{F}_p \rightarrow \ker(\delta_{\theta}) \subset \mathcal{F}_p$ given by $T_{\varphi}(f) = \varphi f$ is a complete isometry.
\end{lemma}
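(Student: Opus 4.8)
The plan is to exploit the single geometric fact that already drives the analogous computation for the ordinary space $\mathcal{F}$ in the proofs of \hyperref[lem:3.1]{Lemma 3.1} and \hyperref[prop:3.2]{Proposition 3.2}: since $\varphi$ is a conformal equivalence of $\mathbb{S}$ onto $\mathbb{D}$, it carries the boundary $\partial\mathbb{S} = \mathbb{S}_0 \cup \mathbb{S}_1$ onto the unit circle, so that $\left|\varphi(z)\right| = 1$ for almost every $z \in \partial\mathbb{S}$, while $\left|\varphi\right| \leq 1$ on all of $\mathbb{S}$ by the maximum principle. Together with the normalization $\varphi(\theta) = 0$, these two facts will give everything.

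First I would check that $T_\varphi$ is well defined, that is, that $\varphi f \in \mathcal{F}_p$ and that its image lies in $\ker(\delta_\theta)$. The latter is immediate: $(\varphi f)(\theta) = \varphi(\theta) f(\theta) = 0$. For the former, analyticity of $\varphi f$ on $\mathbb{S}^o$ (property P1) is clear, and the boundary integrability demanded by P2 and P4 follows at once from $\left|\varphi\right| = 1$ on $\partial\mathbb{S}$, since $\|\varphi(z) f(z)\|_{X_j} = \|f(z)\|_{X_j}$ pointwise on $\mathbb{S}_j$. The only point requiring care is the Poisson reproducing property P3 for the product; this holds because $\varphi$ is a bounded analytic function and multiplication by such a function preserves the Hardy-type class defining $\mathcal{F}_p$. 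This is most transparent after transporting the problem to the disk by $\varphi$, where $\mathcal{F}_p$ corresponds to a vector-valued $H^p$-space (the harmonic measure $\mu_\theta$ becoming normalized arc length because $\varphi(\theta)=0$) and $T_\varphi$ becomes multiplication by the coordinate $w$, which maps $H^p$ into $wH^p \subset H^p$.

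The isometry at the scalar level is then essentially automatic: from $\left|\varphi\right| = 1$ on each $\mathbb{S}_j$ and property P4,
\begin{equation*}
\|\varphi f\|_p^p = (1-\theta)\int_{\mathbb{S}_0}\|\varphi(z)f(z)\|_{X_0}^p\,d\mu_\theta^0(z) + \theta\int_{\mathbb{S}_1}\|\varphi(z)f(z)\|_{X_1}^p\,d\mu_\theta^1(z) = \|f\|_p^p.
\end{equation*}
To upgrade this to a \emph{complete} isometry I would use exactly the criterion invoked in the proof of \hyperref[teo:4.15]{Theorem 4.15}: by Lemma 1.7 of \cite{Pisier05} it suffices to show that $Id_{S_p^n}\otimes T_\varphi$ is an isometry on $S_p^n[\mathcal{F}_p]$ for each $n$. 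Because $\varphi(z)$ is a \emph{scalar} of modulus $1$ for each boundary point $z$, multiplication by it commutes with the $S_p^n$-amplification and scales each fibre norm $\|\cdot\|_{S_p^n[X_j]}$ by $\left|\varphi(z)\right| = 1$; hence, using the description of $S_p^n[\mathcal{F}_p]$ through the vector-valued $L_p$-norms on the boundary (formula (2.10) of \cite{Pisier05}, as in \hyperref[teo:4.15]{Theorem 4.15}), the displayed computation goes through verbatim with each $X_j$ replaced by $S_p^n[X_j]$.

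I expect no genuine obstacle here: the lemma is the direct $\mathcal{F}_p$-version of the boundary-modulus trick already used for $\mathcal{F}$, and the passage to the operator space level is painless precisely because $\varphi$ acts as a unimodular scalar multiplier fibrewise. The only places demanding a little attention are confirming the reproducing property P3 for $\varphi f$ and, if surjectivity onto $\ker(\delta_\theta)$ is also wanted, noting that every $g$ vanishing at $\theta$ factors as $\varphi\cdot(g/\varphi)$ with $g/\varphi \in \mathcal{F}_p$, which is legitimate since $\varphi$ has a simple zero at $\theta$ (so $\varphi'(\theta)\neq 0$). Both are standard Hardy-space facts once the problem is transported to the disk through $\varphi$.
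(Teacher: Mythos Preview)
Your proposal is correct and follows essentially the same route as the paper: the isometry comes from $|\varphi|=1$ on $\partial\mathbb{S}$, property P3 is verified by transporting to the disk and invoking scalar Hardy-space theory (the paper does this explicitly by testing against $x^*\in\Sigma(\overline{X})^*$ and citing specific results from Duren, including the Nevanlinna class $N^+$ for the surjectivity direction), and the complete isometry is obtained exactly as you describe via Lemma 1.7, Proposition 2.1(ii) and (2.10) of \cite{Pisier05}. The paper does in fact prove surjectivity onto $\ker(\delta_\theta)$, and this is needed in \hyperref[prop:4.17]{Proposition 4.17}, so your closing remark is to the point.
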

\begin{proof}
Let $f \in \mathcal{F}_p$. To see that $T_{\varphi}(f) \in \ker(\delta_{\theta})$, we check P3, the other properties being clear.

For $z \in \mathbb{D}^o$ let $dP_{z}$ be the normalized harmonic measure on $\partial\mathbb{D}$ with respect to $z$. Let $w \in \mathbb{S}^o$. Then for every $g : \partial\mathbb{D} \rightarrow \mathbb{C}$ integrable with respect to $dP_{\varphi(w)}$ we have
\begin{equation}\label{eq:4.3}
\int\limits_{\partial\mathbb{S}} g(\varphi(z)) d\mu_w(z) = \int\limits_{\partial\mathbb{D}} g(z) dP_{\varphi(w)}(z)
\end{equation}

Given $x^* \in \Sigma(\overline{X})^*$, let $g : \mathbb{S} \rightarrow \mathbb{C}$ be given by $g(z) = x^*(z f(\varphi^{-1}(z)))$. Since $f$ satisfies P3 and $z$ is a bounded function on $\mathbb{D}$, we get by Theorem 3.1 of \cite{Duren2000} that $g$ is a function in the Hardy space $H^1$, and by \eqref{eq:4.3} and the fact that $x^*$ was arbitrary, we have that $\varphi f$ satisfies P3. Also, $\|\varphi f\|_{p} = \|f\|_p$.

Now, given $f \in \ker(\delta_{\theta})$ and $x^* \in \Sigma(\overline{X})^*$, we have that $g = x^*(f \circ \varphi^{-1})$ is in $H^1$, and therefore is in the Nevanlinna class $N^+$. This implies that $\frac{g}{z} \in N^+$ (\cite{Correa01}, Lemma 1.1), and by Theorem 2.11 of \cite{Duren2000}, we have that $\frac{g}{z} \in H^1$. Using Theorem 3.1 of \cite{Duren2000}, since $x^*$ was arbitrary, we have by \eqref{eq:4.3} that $\frac{f}{\varphi}$ satisfies P3. Also, $\|\frac{f}{\varphi}\|_p = \|f\|_p$.

Using Lemma 1.7, Proposition 2.1 (ii) and (2.10) of \cite{Pisier05}, we get that $T_{\varphi}$ is a complete isometry.
\end{proof}

Let $\delta_{\theta}'$ also denote the evaluation of the derivative at $\theta$ in $\mathcal{F}_p$. Let $dX_{\theta}^p = \mathcal{F}_p/(\ker(\delta_{\theta})\cap\ker(\delta_{\theta}^p)) = \{(f(\theta), f'(\theta)) : f \in \mathcal{F}_p\}$ with the quotient norm, and let $i^p : X_{\theta} \rightarrow dX_{\theta}^p$ and $q^p : dX_{\theta}^p \rightarrow X_{\theta}$ be given by $i^p(y) = (0, y)$ and $q^p(x, y) = x$, respectively.

\begin{props}\label{prop:4.17}
We have a complete extension of $X_{\theta}$
\[
  \xymatrix{ 0 \ar[r] & X_{\theta} \ar[r]^{i^p} & dX_{\theta}^p
    \ar[r]^{q^p} & X_{\theta} \ar[r]  &0}
  \]
  
\noindent and a complete equivalence of complete extensions
\[
  \xymatrix{ 0 \ar[r] & X_{\theta} \ar[r]\ar@{=}[d] & dX_{\theta}
    \ar[r]\ar[d]^{T} & X_{\theta}\ar[r] \ar@{=}[d] &0 \\ 0 \ar[r] & X_{\theta}
    \ar[r] & dX_{\theta}^p \ar[r]& X_{\theta}\ar[r] &0}
  \]

\noindent where $T(x, y) = (x, y)$.
\end{props}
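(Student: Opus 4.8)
The plan is to handle the two assertions in turn: the first by repeating the argument of \hyperref[prop:3.2]{Proposition 3.2} with $\mathcal{F}_p$ in place of $\mathcal{F}$, and the second by invoking the $3$-lemma.

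For the first assertion, fix a conformal equivalence $\varphi : \mathbb{S} \to \mathbb{D}$ with $\varphi(\theta) = 0$ and set $\beta = \frac{1}{|\varphi'(\theta)|}$. The role played by the identity $\ker(\delta_{\theta}) = \varphi\mathcal{F}$ in \hyperref[prop:3.2]{Proposition 3.2} is now taken over by \hyperref[lem:4.16]{Lemma 4.16}: since $T_{\varphi} : \mathcal{F}_p \to \ker(\delta_{\theta})$, $f \mapsto \varphi f$, is a complete isometry, we get $\ker(\delta_{\theta}) = \varphi\mathcal{F}_p$ and, applying $T_{\varphi}$ twice, $\ker(\delta_{\theta})\cap\ker(\delta_{\theta}') = \varphi^2\mathcal{F}_p$, both completely isometrically. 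Given $y \in M_n(X_{\theta})$ and $f \in M_n(\mathcal{F}_p)$ with $f(\theta) = y$, the function $\frac{\varphi f}{\varphi'(\theta)}$ represents $i^p(y) = (0,y)$, and exactly the computation of \hyperref[prop:3.2]{Proposition 3.2}, now using $\|\varphi h\|_p = \|h\|_p$ from \hyperref[lem:4.16]{Lemma 4.16}, yields $\|i^p_n(y)\| = \beta\|y\|_{M_n(X_{\theta})}$, so $i^p$ is a complete isomorphism onto its image. Likewise, for $f \in M_n(\mathcal{F}_p)$ one has $\|q^p_n(f(\theta), f'(\theta))\| = \|f(\theta)\|_{X_{\theta}} \le \|f\|_p$, so $q^p$ is completely contractive, and letting the derivative vary freely shows that the quotient norm of $(x,y)$ modulo $i^p(X_{\theta})$ equals $\|x\|_{X_{\theta}}$, whence $q^p$ is a complete quotient map. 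Exactness is immediate from $\ker q^p = \{(0,y)\} = \Ima i^p$, so the lower row is an extension sequence.

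For the second assertion I would first note that $T(x,y) = (x,y)$ makes the diagram commute, since $T i = i^p$ and $q^p T = q$; the $3$-lemma \hyperref[prop:2.3]{(Proposition 2.3)}, applied to the two extension sequences with the two outer identities as complete isomorphisms, then forces $T$ to be a complete isomorphism \emph{provided} $T$ is completely bounded. To establish the latter it is enough to show that the natural inclusion $\mathcal{F} \hookrightarrow \mathcal{F}_p$ is completely bounded, for then $\|(x,y)\|_{dX_{\theta}^p} \le \|(x,y)\|_{dX_{\theta}}$ at every matrix level, any $f \in \mathcal{F}$ representing $(x,y)$ being admissible for the $\mathcal{F}_p$-quotient norm. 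Every $f \in \mathcal{F}$ indeed lies in $\mathcal{F}_p$: P1, P2 and P4 are clear, and P3 is the Poisson representation of a function bounded and continuous on $\mathbb{S}$ and analytic on $\mathbb{S}^o$. The required norm comparison then reduces to the fact that, for the probability measures $\mu_{\theta}^j$, the inclusion $L_{\infty}(\mathbb{S}_j, \mu_{\theta}^j; X_j) \to L_p(\mathbb{S}_j, \mu_{\theta}^j; X_j)$ is completely contractive in Pisier's operator-space-valued $L_p$, which follows by interpolation from the completely contractive inclusions into $L_{\infty}$ and into $L_1$.

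The step I expect to be delicate is precisely this last comparison at the operator-space level. One must take care not to confuse the amplification $M_n(\mathcal{F}_p) = S_{\infty}^n[\mathcal{F}_p]$ with the $S_p^n[\cdot]$-valued object occurring in the proof of \hyperref[teo:4.15]{Theorem 4.15}: the latter enters only as the test space supplied by Lemma 1.7 of \cite{Pisier05}, whereas the complete boundedness of $T$ concerns the genuine operator-space structures of $dX_{\theta}$ and $dX_{\theta}^p$. Once the completely contractive inclusion $\mathcal{F}\hookrightarrow\mathcal{F}_p$ is secured, both parts of the proposition follow with universal constants, and in fact $T$ is completely contractive.
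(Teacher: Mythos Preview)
Your proposal is correct and follows essentially the same approach as the paper's proof: first reproduce the argument of \hyperref[prop:3.2]{Proposition 3.2} using \hyperref[lem:4.16]{Lemma 4.16} in place of the identity $\ker(\delta_\theta)=\varphi\mathcal{F}$, then deduce the complete equivalence from the complete boundedness of the inclusion $\mathcal{F}\hookrightarrow\mathcal{F}_p$ together with the $3$-lemma. The paper simply asserts this last inclusion is completely bounded without comment, whereas you take the trouble to justify it via interpolation of the vector-valued $L_\infty\to L_p$ inclusion; this extra care is warranted but does not constitute a different route.
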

\begin{proof}
Using \hyperref[lem:4.16]{Lemma 4.16} we may prove that $dX_{\theta}^p$ is a complete extension of $X_{\theta}$ in the same way that it was proved that $dX_{\theta}$ is a complete extension of $X_{\theta}$ \hyperref[prop:3.2]{(Proposition 3.2)}.

Since the inclusion $\mathcal{F} \subset \mathcal{F}_p$ is completely bounded, we get that $T$ is completely contractive. By the $3-$lemma, the two complete extensions are completely equivalent.
\end{proof}

Since $\min(\ell_2)$ and $\max(\ell_2)$ are Hilbertian, $\mathcal{F}_2$ is Hilbertian, and so is $do(\ell_2)(\frac{1}{2})^2$. By the previous proposition, this is a complete extension of $OH$ completely equivalent to $do(\ell_2)(\frac{1}{2})$, and therefore the two complete extensions have the same properties regarding complete singularity and complete cosingularity. This way, the isometric version of Palais' problem for operator spaces is solved.

\section{Complete extensions of $OH$}\label{sec:5}
We may also obtain the operator Hilbert Space $OH$ by the interpolation scale $(R, C)$. We have $OH = (R, C)_{\frac{1}{2}}$. Let $R(\theta) = (R, C)_{\theta}$ for $0 < \theta < 1$.

In the previous section we showed that the interpolation scale $(\min(\ell_2), \max(\ell_2))$ induces a completely singular/cosingular extension of $(\min(\ell_2), \max(\ell_2))_{\theta}$, for $\theta \in (0, 1)$. 

Notice that, if we let $x^n \in M_n(c_{00})$ be as in the previous section, we have:
\begin{eqnarray*}
\|x^n\|_{M_n(R)} & = & \|x^n\|_{M_n(\max(\ell_2))} = n^{\frac{1}{2}} \\
\|x^n\|_{M_n(C)} & = & \|x^n\|_{M_n(\min(\ell_2))} = 1
\end{eqnarray*}

Since $R = C^*$ and $C = R^*$ completely isometrically, we may mimic the proofs of the previous section and show that $f_n(z) = e^{(z-\theta)\log \sqrt{n}} x^n$ is a function in $M_n(\mathcal{F}(R, C))$ such that $f_n(\theta) = x^n$ and $\|f_n\| = \|x^n\|_{M_n(R(\theta))}$.

Also, the spaces $R(\theta)$ are homogeneous Hilbertian operator spaces, which implies that each closed infinite dimensional subspace is completely isometric to the whole space. This means that the proof that $do(\ell_p)(\theta)$ is completely singular and completely cosingular also works in this context, and we obtain:
\begin{theorem}\label{teo:5.1}
The interpolation scale $(R, C)$ induces a completely singular (cosingular) complete extension of $R(\theta)$, for $\theta \in (0, 1)$.
\end{theorem}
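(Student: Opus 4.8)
The plan is to transplant the argument of \hyperref[teo:4.1]{Theorem 4.1} wholesale to the couple $(R,C)$, exploiting the two features already isolated before the statement: $R$ and $C$ coincide with $\ell_2$ as Banach spaces, and every $R(\theta)$ is homogeneous. Homogeneity trivialises the subspace bookkeeping of Section~4. Where the $o(\ell_p)$ argument had to pass to complemented subspaces and invoke \hyperref[lem:4.6]{Lemma 4.6}, here every closed subspace of $\ell_2$ is orthogonally complemented and, by homogeneity, already carries the $R(\theta)$-structure \emph{completely isometrically}; in particular every closed infinite dimensional subspace of $R(\theta)$ is a completely isometric copy of $R(\theta)$. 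So I would only need to supply two inputs: the analogue of \hyperref[lem:4.9]{Lemma 4.9}, i.e.\ $R(\theta)\oplus R(\theta)\cong R(\theta)$ completely, and the analogue of \hyperref[prop:4.8]{Proposition 4.8}. For the first, the shuffle $U(e_{2n})=(e_n,0)$, $U(e_{2n+1})=(0,e_n)$ is a complete isomorphism $R\to R\oplus R$ and $C\to C\oplus C$ (for positive operators $A,B$ one has $\max(\|A\|,\|B\|)\le\|A+B\|\le 2\max(\|A\|,\|B\|)$, which controls the row and column norms up to $\sqrt2$), so by the Riesz--Thorin theorem for operator spaces $U$ is a complete isomorphism $R(\theta)\to R(\theta)\oplus R(\theta)$.

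For the analogue of \hyperref[prop:4.8]{Proposition 4.8}, set $X=\{(x,0):x\in R(\theta)\}\subset dR(\theta)$ and suppose some infinite dimensional $W\subset X$ were completely isomorphic to $R(\theta)$, say $W=\{(x,0):x\in W'\}$ with $W'\subset\ell_2$ closed. Let $P$ be the orthogonal projection onto $W'$ and $T'\colon R(\theta)\to W'$ the underlying isomorphism of the complete isomorphism $T\colon R(\theta)\to W$, which is itself a complete isomorphism by homogeneity. Exactly as in \hyperref[prop:4.8]{Proposition 4.8}, pushing an optimal interpolating function for $(T'_n x^n,0)$ through $P_n$ and $(T'_n)^{-1}$ produces an interpolating function for $x^n$ with vanishing derivative, giving a constant $C$ with $\|(x^n,0)\|_{M_n(dR(\theta))}\le C\,\|x^n\|_{M_n(R(\theta))}$ for every $n$. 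On the other hand, the extremal function $f_n(z)=e^{(z-\theta)\log\sqrt n}\,x^n$ supplied before the statement satisfies $f_n(\theta)=x^n$, $f_n'(\theta)=\frac12(\log n)x^n$ and $\|f_n\|=\|x^n\|_{M_n(R(\theta))}$, so \hyperref[lem:4.7]{Lemma 4.7}, which applies since $R=C=\ell_2$ as Banach spaces, gives
\[
\|(x^n,0)\|_{M_n(dR(\theta))}\ \geq\ \frac{1}{4}\left(1+\frac{\beta}{2}\log n\right)\|x^n\|_{M_n(R(\theta))}.
\]
As $\log n$ is unbounded, this contradicts the previous estimate, so $X$ contains no completely isomorphic copy of $R(\theta)$.

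The completely singular conclusion then follows verbatim from the proof of \hyperref[teo:4.1]{Theorem 4.1}: if the sequence split completely over some infinite dimensional $W\subset R(\theta)$, then by \hyperref[prop:2.9]{Proposition 2.9} the space $q^{-1}(W)=\{(x,y):x\in W\}$ would be completely isomorphic to $R(\theta)\oplus W\cong R(\theta)\oplus R(\theta)\cong R(\theta)$, hence homogeneous and so $R(\theta)$-completely saturated; but it contains the infinite dimensional subspace $\{(x,0):x\in W\}\subset X$, which by the previous paragraph harbours no completely isomorphic copy of $R(\theta)$, a contradiction. For complete cosingularity I would argue by duality as in the last paragraph of \hyperref[teo:4.1]{Theorem 4.1}. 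Since $R=C^*$ and $C=R^*$ completely isometrically, $R(\theta)^*=(R^*,C^*)_\theta=(C,R)_\theta=R(1-\theta)$, and the hypotheses of \hyperref[teo:3.4]{Theorem 3.4} hold because $\Delta(R,C)=R\cap C=\ell_2$ is dense in both $R$ and $C$ and these are reflexive. Thus $(dR(\theta))^*$ is completely equivalent to $dR(1-\theta)$, whose quotient map is c.s.s.\ by the singularity already established (now with $1-\theta\in(0,1)$). By \hyperref[prop:2.14]{Proposition 2.14} the adjoint $i^*$ of the inclusion is c.s.s., and \hyperref[prop:2.15]{Proposition 2.15}(2) yields that $i$ is c.s.c., i.e.\ the extension is completely cosingular.

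I expect no genuinely new difficulty in the computational core, since the logarithmic divergence of $\|x^n\|$ against its "derivative norm'' is already packaged in $f_n$ and in the norms recorded before the statement. The only new verification is $R(\theta)\oplus R(\theta)\cong R(\theta)$, which is the short row/column estimate above, and the two care-points are that \hyperref[lem:4.7]{Lemma 4.7} genuinely applies (it does, as $R$ and $C$ share the Hilbertian Banach structure) and that the self-duality $R(\theta)^*=R(1-\theta)$ and the density--reflexivity hypotheses of \hyperref[teo:3.4]{Theorem 3.4} are in force. Both are routine, which is exactly why the homogeneity of the scale $R(\theta)$ lets the Section~4 machinery run essentially unchanged.
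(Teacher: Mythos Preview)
Your proposal is correct and follows essentially the same route as the paper: the paper's proof is precisely the remark preceding \hyperref[teo:5.1]{Theorem 5.1} that the Section~4 argument carries over because $R(\theta)$ is homogeneous Hilbertian and the extremal function $f_n$ has already been exhibited. You have simply written out in detail the steps the paper leaves implicit, including the verification of $R(\theta)\oplus R(\theta)\cong R(\theta)$ and the observation that homogeneity removes the need to pass to complemented subspaces.
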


Let $dOH$ be the complete extension of $OH$ obtained by the interpolation scale $(R, C)$, and $do(\ell_2)$ the one obtained by $(\min(\ell_2), \max(\ell_2))$ at $\theta = \frac{1}{2}$. We now show that these are not completely isomorphically equivalent.

Let $(e_n)$ be an orthonormal basis of $\ell_2$, and for each $n \in \mathbb{N}$, let $y^n \in M_n(\ell_2)$ be given by
\[
y^n = \begin{pmatrix}
e_1 & e_2 & \cdots & e_n \\
\vdots & \vdots & \ddots & \vdots \\
e_{n^2 - n + 1} & e_{n^2 - n + 2} & \cdots & e_{n^2}
\end{pmatrix}
\]

We will need the norms different operator space structures of $\ell_2$. Recall that:
\[
\|\sum x_k \otimes e_k\|_{M_n(OH)} = \|\sum x_k \otimes \overline{x_k}\|^{\frac{1}{2}}
\]
\noindent (see \cite{Pisier03}). Simple calculations show that $\|y^n\|_{M_n(R)} = \|y^n\|_{M_n(C)} = \|y^n\|_{M_n(OH)} = n^{\frac{1}{2}}$, and $\|y^n\|_{M_n(\min(\ell_2))} = 1$. To compute the norm of $y^n$ in $\max(\ell_2)$, we recall that $\max(\ell_2) = \min(\ell_2)^*$, and use the following lemma, which may be found in \cite{Paulsen2002} (commentary after Proposition 8.11).

\begin{lemma}\label{lem:5.2}
Let $X$ be an operator space. If $\varphi : X \rightarrow M_n$, then $\|\varphi\|_{cb} \leq n\|\varphi\|$.
\end{lemma}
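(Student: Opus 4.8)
The plan is to factor $\varphi$ through the minimal operator space structure on $M_n$, reducing the statement to a single estimate on the formal identity $\mathrm{id}\colon\min(M_n)\to M_n$. Writing $\varphi=\mathrm{id}\circ\tilde\varphi$, where $\tilde\varphi\colon X\to\min(M_n)$ is $\varphi$ with codomain regarded as $\min(M_n)$, the cb-norm rule recalled in the introduction gives $\|\tilde\varphi\|_{cb}=\|\varphi\|$, and since the cb-norm is submultiplicative under composition it suffices to prove $\|\mathrm{id}\colon\min(M_n)\to M_n\|_{cb}\le n$.

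For this, fix $m\in\mathbb N$ and $a=(a_{kl})_{k,l=1}^m\in M_m(\min(M_n))$ with $\|a\|_{M_m(\min(M_n))}\le 1$; I must show that, viewed as an element of $M_{mn}=B(\ell_2^m\otimes\ell_2^n)$, it satisfies $\|a\|\le n$. The key observation is that for each pair of basis vectors $e_i,e_j\in\ell_2^n$ the scalar $m\times m$ matrix $B^{i,j}=(\langle a_{kl}e_i,e_j\rangle)_{k,l}$ has $\|B^{i,j}\|_{M_m}\le 1$: indeed, for unit $\alpha,\beta\in\ell_2^m$ one has $\langle B^{i,j}\alpha,\beta\rangle=\langle(\sum_{k,l}\overline{\beta_k}\alpha_l a_{kl})e_i,e_j\rangle$, whose modulus is at most $\|\sum_{k,l}\overline{\beta_k}\alpha_l a_{kl}\|_{M_n}\le 1$ by \eqref{eq:1.3}. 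Now take unit vectors $\xi=\sum_l e_l\otimes\xi_l$ and $\eta=\sum_k e_k\otimes\eta_k$ in $\ell_2^m\otimes\ell_2^n$ and expand the inner components in the basis of $\ell_2^n$; collecting the $i$-th inner coordinates into $\xi^{(i)}\in\ell_2^m$ and likewise $\eta^{(j)}$, a direct computation gives
\[
\langle a\xi,\eta\rangle=\sum_{i,j=1}^n\langle B^{i,j}\xi^{(i)},\eta^{(j)}\rangle .
\]
Bounding each summand by $\|\xi^{(i)}\|\,\|\eta^{(j)}\|$ and applying Cauchy--Schwarz over the $n$ inner indices, using $\sum_i\|\xi^{(i)}\|^2=\|\xi\|^2=1$ and the analogue for $\eta$, yields $|\langle a\xi,\eta\rangle|\le(\sum_i\|\xi^{(i)}\|)(\sum_j\|\eta^{(j)}\|)\le\sqrt n\cdot\sqrt n=n$. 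Hence $\|a\|\le n$ for every $m$, so $\|\mathrm{id}\|_{cb}\le n$ and the lemma follows.

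The main obstacle is to extract the \emph{linear} factor $n$ rather than $n^2$ or a quantity growing with $m$: estimating $a$ entrywise through the triangle inequality over its $n^2$ blocks produces $n^2$, while a Cauchy--Schwarz carried out over the outer index $m$ would produce the useless bound $m$. The decisive idea is to slice $a$ into the scalar matrices $B^{i,j}$ --- each of which is controlled by the defining formula \eqref{eq:1.3} for the minimal structure --- and to run Cauchy--Schwarz only over the $n$-dimensional inner index, which is exactly what generates the two factors $\sqrt n$. This is also the point at which it is essential that the target be $M_n$ and not a larger matrix algebra.
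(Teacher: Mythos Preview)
Your argument is correct. The paper does not actually prove this lemma: it merely cites \cite{Paulsen2002} (the remark after Proposition~8.11 there), so there is no in-paper proof to compare against.

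Your route---factoring through $\min(M_n)$ and then bounding $\|\mathrm{id}\colon\min(M_n)\to M_n\|_{cb}$ by slicing into the scalar matrices $B^{i,j}$ and applying Cauchy--Schwarz over the $n$ inner indices---is essentially a repackaging of the standard proof. The classical version decomposes $\varphi$ into its $n^2$ coordinate functionals $\varphi_{ij}\colon X\to\mathbb C$, uses that linear functionals are automatically completely contractive, then rewrites $\varphi_m(a)\in M_m(M_n)\cong M_n(M_m)$ as the block matrix $((\varphi_{ij})_m(a))_{i,j}$ and applies the elementary estimate $\|(B_{ij})\|_{M_n(M_m)}\le n\max_{i,j}\|B_{ij}\|$ (proved by the same Cauchy--Schwarz step you carry out). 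Your matrices $B^{i,j}$ are exactly these blocks $(\varphi_{ji})_m(a)$ in the special case $\varphi=\mathrm{id}_{M_n}$, so the two arguments coincide once one unwinds the factorization. The advantage of your presentation is that it isolates the place where the operator space structure of the domain plays no role (everything happens in $\min(M_n)\to M_n$), which makes the source of the factor $n$ very transparent.
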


\begin{lemma}\label{lem:5.3}
We have $\|y^n\|_{M_n(\max(\ell_2))} = n$.
\end{lemma}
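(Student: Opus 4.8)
The plan is to pass to the predual picture and reduce everything to a single completely bounded map into $M_n$. By the duality identifications of Section 1.2 we have $\max(\ell_2) = (\min(\ell_2))^*$, hence $M_n(\max(\ell_2)) = M_n((\min(\ell_2))^*) = CB(\min(\ell_2), M_n)$. Under this identification $y^n$ corresponds to the map $\varphi : \min(\ell_2) \to M_n$ determined by $\varphi(e_{(i-1)n+j}) = E_{ij}$ for $1 \leq i,j \leq n$ and $\varphi(e_k) = 0$ for $k > n^2$, where $E_{ij}$ are the matrix units. Thus $\|y^n\|_{M_n(\max(\ell_2))} = \|\varphi\|_{cb}$, and it suffices to prove $\|\varphi\|_{cb} = n$.

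For the upper bound I would first bound the ordinary norm of $\varphi$, which is insensitive to the operator space structure and so equals $\|\varphi : \ell_2 \to M_n\|$. For $x = \sum_k x_k e_k$ the matrix $\varphi(x)$ has entries $x_{(i-1)n+j}$, so its operator norm is at most its Hilbert--Schmidt norm $\big(\sum_{k \leq n^2} |x_k|^2\big)^{1/2} \leq \|x\|_2$, giving $\|\varphi\| \leq 1$. Lemma 5.2 then yields $\|\varphi\|_{cb} \leq n\|\varphi\| \leq n$.

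The lower bound is the crux, and here I would simply test the amplification $\varphi_n$ on $y^n$ itself, viewed now as an element of $M_n(\min(\ell_2))$ of norm $\|y^n\|_{M_n(\min(\ell_2))} = 1$ (computed just above). Since the $(k,l)$ entry of $y^n$ is $e_{(k-1)n+l}$, one finds $\varphi_n(y^n) = (E_{kl})_{k,l}$, the block matrix whose $(k,l)$ block is $E_{kl}$, that is
\[
\varphi_n(y^n) = \sum_{k,l} E_{kl} \otimes E_{kl} = v v^* , \qquad v = \sum_{k=1}^n e_k \otimes e_k \in \mathbb{C}^n \otimes \mathbb{C}^n .
\]
This rank-one operator has norm $\|v\|^2 = n$, so $\|\varphi\|_{cb} \geq \|\varphi_n(y^n)\| / \|y^n\|_{M_n(\min(\ell_2))} = n$. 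Combining the two bounds gives $\|y^n\|_{M_n(\max(\ell_2))} = \|\varphi\|_{cb} = n$.

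The main thing to get right is the lower bound: the point is that in the \emph{minimal} structure $y^n$ has norm only $1$, yet applying its own associated map to it produces the matrix $\sum_{k,l} E_{kl} \otimes E_{kl}$, whose recognition as $vv^*$ of norm exactly $n$ is what makes the lower bound meet the Lemma 5.2 upper bound. A minor point to check in passing is that the conjugation convention in the pairing $(\min(\ell_2))^* = \max(\ell_2)$ is immaterial, since only moduli of coefficients enter the norm estimates.
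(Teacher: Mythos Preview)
Your proof is correct and follows essentially the same route as the paper: the same duality identification $M_n(\max(\ell_2)) = CB(\min(\ell_2), M_n)$, the upper bound via Lemma~5.2 together with $\|\varphi\|\le 1$, and the lower bound by evaluating the amplification $(y^n)_n$ on $y^n$ itself. The only cosmetic differences are that the paper obtains $\|\varphi\|=\|y^n\|_{M_n(\min(\ell_2))}=1$ by invoking \eqref{eq:1.4} rather than your Hilbert--Schmidt estimate, and it states $\|(y^n)_n(y^n)\|=n$ without spelling out the $vv^*$ identification that you make explicit.
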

\begin{proof}
We have $M_n(\max(\ell_2)) = M_n(\min(\ell_2)^*) = CB(\min(\ell_2), M_n)$ isometrically. Therefore, by the previous lemma:
\[
\|y^n\|_{M_n(\max(\ell_2))} \leq n\|y^n\|_{\ell_2 \rightarrow M_n}
\]

But by \eqref{eq:1.4}:
\begin{eqnarray*}
\|y^n\|_{\ell_2 \rightarrow M_n} & = & \sup\{\|y^n(x)\| : x \in \ell_2, \|x\|=1\} \\
        & = & \sup\{\|(x)_n(y^n)\| : x \in \ell_2, \|x\| = 1\} \\
        & = & \|y^n\|_{M_n(\min(\ell_2))}
\end{eqnarray*}

So $\|y^n\|_{M_n(\max(\ell_2))} \leq n$. But $\|(y^n)_n(y^n)\| = n$, and therefore $\|y^n\|_{M_n(\max(\ell_2))} = n$.
\end{proof}

\begin{theorem}\label{teo:5.4}
The complete extensions $dOH$ and $do(\ell_2)$ are not completely isomorphically equivalent.
\end{theorem}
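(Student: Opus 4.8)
The plan is to argue by contradiction. Suppose $dOH$ and $do(\ell_2)$ are completely isomorphically equivalent, so that there are complete isomorphisms $\alpha,\gamma$ of $OH$ and a complete isomorphism $\Phi\colon dOH\to do(\ell_2)$ fitting into the commutative diagram of Definition 2.4(1). Identifying both derived spaces with $OH\oplus OH$ as vector spaces (via $(x,y)=(f(\theta),f'(\theta))$, with quotient $q(x,y)=x$ and inclusion $i(y)=(0,y)$), commutativity of the diagram forces
\[
\Phi(x,y)=(\gamma x,\ \alpha y+Dx)
\]
for a single linear map $D\colon\ell_2\to\ell_2$. The whole argument consists in showing that this off-diagonal $D$ is simultaneously bounded and unbounded, the tension being produced by the vectors $y^n$, which are symmetric for the couple $(R,C)$ but maximally asymmetric for $(\min(\ell_2),\max(\ell_2))$.

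First I would bound $D$ from above. Testing $\Phi$ at the first matrix level, where $\min(\ell_2)$, $\max(\ell_2)$, $R$, $C$ and $OH$ all coincide isometrically with $\ell_2$ and the interpolation derivation vanishes, Lemma 4.7 gives $\|(\gamma x,Dx)\|_{do(\ell_2),1}\ge\tfrac{\beta}{4}\|Dx\|_2$ while $\|(x,0)\|_{dOH,1}=\|x\|_2$; complete boundedness of $\Phi$ then yields $\|D\|_{B(\ell_2)}\le 4\|\Phi\|_{cb}/\beta<\infty$. Since $OH$ is homogeneous, $\|D\|_{cb,OH}=\|D\|_{B(\ell_2)}$, so in particular $\|D_n y^n\|_{M_n(OH)}\le C\,\|y^n\|_{M_n(OH)}=C\,n^{1/2}$ for every $n$.

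Next I would bound $D_n y^n$ from below by transporting and comparing two cheap lifts of $\gamma_n y^n$ in $do(\ell_2)$. On the one hand, the constant function $z\mapsto y^n$ is an optimal interpolant of $y^n$ in $\mathcal F(R,C)$ because $\|y^n\|_{M_n(R)}=\|y^n\|_{M_n(C)}=n^{1/2}$; hence $(y^n,0)$ has $M_n(dOH)$-norm at most $n^{1/2}$ and its image $\Phi_n(y^n,0)=(\gamma_n y^n,D_n y^n)$ has $do(\ell_2)$-norm at most $\|\Phi\|_{cb}\,n^{1/2}$. On the other hand, using $\|y^n\|_{M_n(\min(\ell_2))}=1$ and $\|y^n\|_{M_n(\max(\ell_2))}=n$ (Lemmas 4.2, 4.4 and 5.3), the function $G(z)=n^{-(z-\theta)}y^n$ lies in $M_n(\mathcal F(\min(\ell_2),\max(\ell_2)))$ with $\|G\|=n^{1/2}$, $G(\theta)=y^n$ and $G'(\theta)=-\log(n)\,y^n$; since $\gamma$ is bounded on $\ell_2$ it is completely bounded on the homogeneous spaces $\min(\ell_2)$ and $\max(\ell_2)$ by \eqref{eq:1.6}, so $\gamma_n G$ witnesses that $(\gamma_n y^n,-\log(n)\gamma_n y^n)$ has $do(\ell_2)$-norm at most $\|\gamma\|\,n^{1/2}$. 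These two lifts have the same image $\gamma_n y^n$ under the quotient map, so their difference lies in the subspace $i_2(OH)$, on which the inclusion scales norms exactly by $\beta$ (Proposition 3.2). Therefore
\[
\beta\,\|D_n y^n+\log(n)\gamma_n y^n\|_{M_n(OH)}\le(\|\Phi\|_{cb}+\|\gamma\|)\,n^{1/2},
\]
and, combined with $\|\gamma_n y^n\|_{M_n(OH)}\ge\|\gamma^{-1}\|_{cb}^{-1}n^{1/2}$, this forces $\|D_n y^n\|_{M_n(OH)}\ge\big(\|\gamma^{-1}\|_{cb}^{-1}\log n-C'\big)n^{1/2}$, which grows strictly faster than $n^{1/2}$.

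The two estimates are incompatible for large $n$, and this contradiction completes the proof. The conceptual heart of the argument — and the step I expect to be the main obstacle to get right — is the lower bound: one must realize that the correct object to transport is not the naive lift $(\gamma_n y^n,0)$ (every element of $OH$ admits a cheap lift, so no single lift can be distinguished) but rather the competition between the $(R,C)$-optimal lift of $y^n$, whose derivative coordinate is $0$, and the $(\min,\max)$-optimal lift of $\gamma_n y^n$, whose derivative coordinate is forced to be of order $\log(n)\,\gamma_n y^n$. Homogeneity of $\min(\ell_2)$ and $\max(\ell_2)$ is what lets $\gamma$ carry the asymmetry of $y^n$ over to $\gamma_n y^n$, while homogeneity of $OH$ is what caps $D$; the genuine asymmetry between the two interpolation couples along the family $(y^n)$ is precisely the $\log n$ gap that no single bounded off-diagonal map can absorb.
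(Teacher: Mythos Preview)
Your proof is correct and takes essentially the same approach as the paper: the same structural form $\Phi(x,y)=(\gamma x,\alpha y+Dx)$, the boundedness of the off-diagonal $D$ via homogeneity of $OH$ at level~$1$, and the same test family $(y^n)$ exploiting its symmetry for $(R,C)$ against its maximal asymmetry for $(\min(\ell_2),\max(\ell_2))$. Your lower-bound step, comparing two explicit lifts of $\gamma_n y^n$ and using the exact scaling of the inclusion $i$, is a slightly cleaner packaging of what the paper obtains by invoking Lemma~4.7 directly on $(C_n y^n,T_n y^n)$.
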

\begin{proof}
Suppose we have a commutative diagram
  \[
  \xymatrix{ 0 \ar[r] & OH \ar[r]\ar[d]^{A} & dOH
    \ar[r]\ar[d]^{B} & OH\ar[r] \ar[d]^{C} &0 \\ 0 \ar[r] & OH
    \ar[r] & do(\ell_2) \ar[r]& OH\ar[r] &0}
  \]
  
\noindent where the vertical arrows are complete isomorphisms. Then we must have $B(x, y) = (C x, A y + Tx)$, where $x, y \in OH$ and $T : OH \rightarrow OH$ is a completely bounded map. Indeed, since on the first level the twisted sums are trivial with the identity as isomorphism (a consequence of \hyperref[lem:4.7]{Lemma 4.7}), we have for some constant $K > 0$ and for every $x \in OH$:
\[
\|Tx\| \leq K\|(C x, Tx)\|_{do(\ell_2)} = K \|B(x, 0)\|_{do(\ell_2)} \leq K \|B\|\|(x, 0)\|_{dOH} = K \|B\|\|x\|
\]

By the homogeneity of $OH$, $T$ is completely bounded.

By taking the constant function with value $y^n$, we have, for each $n \in \mathbb{N}$:
\[
\|(C_n y^n, T_n y^n)\|_{M_n(do(\ell_2))} = \|B_n(y^n, 0)\|_{M_n(do(\ell_2)))} \leq \|B\|_{cb}\|y^n\|_{M_n(OH)}
\]

For every $n \in \mathbb{N}$, let $F_n(z) = e^{\mu_n (z - \frac{1}{2})}y_n$, where $\mu_n = \log (n^{-1})$. Then, by the previous calculations, $F_n$ is extremal for $y^n$ with respect to the interpolation scheme $(\min(\ell_2), \max(\ell_2)) = OH$, that is, $F_n(\theta) = y^n$ and $\|F_n\|_{M_n(\mathcal{F}(\min(\ell_2), \max(\ell_2)))} = \|y^n\|$. Using \hyperref[lem:4.7]{Lemma 4.7} for $do(\ell_2)$, we have:
\[
\frac{1}{4}(\|C_n y^n\|_{M_n(OH)} + \beta\|T_n y^n - (\log n^{-1}) y^n\|_{M_n(OH)}) \leq \|(C_n y^n, T_n y^n)\|_{M_n(do(\ell_2))}
\]

However
\begin{eqnarray*}
&& \frac{1}{4}(\|C_n y^n\|_{M_n(OH)} + \beta\|T_n y^n - (\log n^{-1}) y^n\|_{M_n(OH)}) \geq \\
&& \frac{1}{4}(\|C^{-1}\|_{cb}^{-1}\|y^n\|_{M_n(OH)} + \beta((\log n) \|y^n\|_{M_n(OH)} - \|T\|_{cb}\|y^n\|_{M_n(OH)})) = \\
&&K(n)\|y^n\|_{M_n(OH)}
\end{eqnarray*}

All this implies that
\[
K(n) \leq \|B\|_{cb}
\]

\noindent for every $n \in \mathbb{N}$, a contradiction.
\end{proof}

We use \hyperref[teo:5.4]{Theorem 5.4} to prove a result that is certainly known, but that we could not find in the literature \hyperref[cor:5.6]{(Corollary 5.6)}. The first part of the following lemma is in \cite{Pisier03}. The second is an easy adaptation of the Banach space case.

\begin{lemma}(Reiteration)\label{lem:5.5}
Let $(X_0, X_1)$ be a compatible couple of operator spaces and let $Y_0 = (X_0, X_1)_{\theta_0}$ and $Y_1 = (X_0, X_1)_{\theta_1}$. Suppose that $X_0 \cap X_1$ is dense in $X_0, X_1$ and $Y_0\cap Y_1$. Then $(Y_0, Y_1)_{\theta} = (X_0, X_1)_{\eta}$ completely isometrically, and the induced complete extensions are completely  projectively equivalent, where $\eta = (1 - \theta)\theta_0 + \theta\theta_1$.
\end{lemma}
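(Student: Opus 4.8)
The plan is to prove the Reiteration Lemma in two stages, following the standard Banach-space argument and then verifying that the natural identity map is completely bounded in both directions with uniform constants. I would first establish the completely isometric identity $(Y_0, Y_1)_{\theta} = (X_0, X_1)_{\eta}$ at the matrix level, and then treat the statement about complete projective equivalence of the induced extensions separately, since the latter requires tracking the behaviour of the derivatives $\delta'_{\theta}$.

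\textbf{Complete isometry of the interpolation spaces.} Since the operator space structure on an interpolation space is defined by the isometric identifications $M_n(X_{\eta}) = (M_n(X_0), M_n(X_1))_{\eta}$ (and likewise for $Y_0, Y_1$), it suffices to prove the result at each matrix level $M_n$. That is, I would reduce to the Banach-space reiteration theorem applied to the compatible couple $(M_n(X_0), M_n(X_1))$. The classical reiteration theorem (Theorem 4.6.1 of \cite{Bergh01}) gives the isometric identity $(M_n(Y_0), M_n(Y_1))_{\theta} = (M_n(X_0), M_n(X_1))_{\eta}$ precisely under a density hypothesis, which is exactly why the statement assumes $X_0 \cap X_1$ is dense in $X_0$, $X_1$ and $Y_0 \cap Y_1$; the density passes to the matrix couples by \eqref{eq:1.2}. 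Applying this isometry for every $n$ yields $M_n((Y_0,Y_1)_{\theta}) = M_n((X_0,X_1)_{\eta})$ isometrically for all $n$, which is precisely the statement that the identity map is a complete isometry. The first part of the lemma being attributed to \cite{Pisier03} means I may cite that reference for this half.

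\textbf{Complete projective equivalence of the extensions.} For the second half, recall that the extension $dX_{\eta}$ is described by Proposition 3.2 as $\{(f(\eta), f'(\eta)) : f \in \mathcal{F}(\overline{X})\}$, while the extension built from the couple $(Y_0, Y_1)$ at $\theta$ is $\{(g(\theta), g'(\theta)) : g \in \mathcal{F}(\overline{Y})\}$. The key observation is that if $g \in \mathcal{F}(\overline{Y})$ interpolates $(X_0, X_1)$ at level $\theta$ via the reiteration identification, then the natural reparametrization linking the strip for $(Y_0,Y_1)$ to the strip for $(X_0,X_1)$ is affine: a point $\theta$ in the first strip corresponds to $\eta = (1-\theta)\theta_0 + \theta\theta_1$ in the second, with constant derivative factor $(\theta_1 - \theta_0)$. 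Consequently the derivative $g'(\theta)$ is, up to the scalar $(\theta_1 - \theta_0)$, identified with $f'(\eta)$ for the corresponding extremal $f$. This scalar is what produces a \emph{multiple of the identity} on the sub- and quotient copies of $X_{\eta}$ rather than the identity itself, which is exactly the definition of complete projective equivalence (Definition 2.4(2)). I would make this precise by defining $\beta(x,y) = (x, (\theta_1 - \theta_0)^{-1} y)$ or its inverse as the middle map $\beta$, checking that it intertwines the two extensions and is a complete isomorphism by the completely isometric identity already established, and invoking the $3$-lemma (Proposition 2.3) to conclude.

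\textbf{Main obstacle.} The delicate point is not the isometric identification, which is routine once reduced to the matrix levels, but rather producing the extremal interpolating functions for the reiterated couple in a way that genuinely relates $g'(\theta)$ to $f'(\eta)$, so that the scalar $(\theta_1 - \theta_0)$ emerges cleanly. The reiteration theorem gives equality of \emph{values} $g(\theta) = f(\eta)$ isometrically, but to control the derived spaces I must lift this to a statement about functions and their first derivatives, which means carefully composing the interpolating functions for $(Y_0, Y_1)$ with those realizing $Y_0, Y_1$ as interpolation spaces of $(X_0, X_1)$. Verifying that this composition respects the $\mathcal{F}$-norm completely (i.e., at every matrix level and with the operator space structure on $\mathcal{F}$ coming from the inclusion into $L_\infty(\mathbb{R}, X_0) \oplus_\infty L_\infty(\mathbb{R}, X_1)$) is where the real work lies; the chain rule supplies the factor $(\theta_1 - \theta_0)$, and the remaining task is the bookkeeping that this factor is the only discrepancy, so that $\alpha$ and $\gamma$ are scalar multiples of the identity while $\beta$ remains a genuine complete isomorphism.
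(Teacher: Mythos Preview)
Your proposal is correct and follows essentially the same approach as the paper: cite \cite{Pisier03} for the complete isometry, define the middle map as $(x,y)\mapsto (x,(\theta_1-\theta_0)y)$, obtain the scalar from the chain rule applied to the affine reparametrization, and finish with the $3$-lemma. One simplification worth noting: the paper sidesteps the composition you flag as the main obstacle by going in the direction $dX_{\eta}\to dY_{\theta}$ via direct substitution $g(z)=f((1-z)\theta_0+z\theta_1)$ (after using Stafney's lemma so that $f$ takes values in $X_0\cap X_1$), which lands in $\mathcal{F}(\overline{Y})$ with $\|g\|_{\mathcal{F}(\overline{Y})}\le \|f\|_{\mathcal{F}(\overline{X})}$ at every matrix level; this gives complete boundedness of $T$ in one stroke, and the $3$-lemma then supplies the inverse automatically, so no ``composing with extremals for $Y_0,Y_1$'' is needed.
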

\begin{proof}
We prove only the last part. By the 3-lemma \hyperref[prop:2.3]{(Proposition 2.3)}, it is enough to find $T$ completely bounded making the following diagram commute:
  \[
  \xymatrix{ 0 \ar[r] & X_{\eta} \ar[r]\ar[d]^{(\theta_1 - \theta_0)Id} & dX_{\eta}
    \ar[r]\ar[d]^{T} & X_{\eta}\ar[r] \ar@{=}[d] &0 \\ 0 \ar[r] & Y_{\theta}
    \ar[r] & dY_{\theta} \ar[r]& Y_{\theta}\ar[r] &0}
  \]
Let $T$ be given by $T(x, y) = (x, (\theta_1 - \theta_0)y)$ for $x, y \in X_0 \cap X_1$ (recall from \hyperref[teo:3.4]{Theorem 3.4} that this forms a dense subspace of $dX_{\eta}$).

This is well defined. To see this, take $f \in \mathcal{F}(\overline{X})$ with image in $X_0 \cap X_1$ such that $f(\eta) = x$, $f'(\eta) = y$. Also, from the modification of Stafney's Lemma also cited in \hyperref[teo:3.4]{Theorem 3.4} together with Lemma 4.2.3 of \cite{Bergh01}, we may suppose that $f$ is of the form $f(z) = \sum\limits_{i=1}^N c_i(z) x_i$, with $c_i$ continuous bounded on $\mathbb{S}$, analytic on $\mathbb{S}^o$. 

All of this is to ensure that the function $g$ given by $g(z) = f((1- z)\theta_0 + z\theta_1)$ is in $\mathcal{F}(\overline{Y})$. Then $g(\theta) = x$ and $g'(\theta) = (\theta_1 - \theta_0)y$. Therefore, $T$ is well defined.

Also, $g$ as defined above satisfies $\|g\|_{\mathcal{F}(\overline{Y})} \leq \|f\|_{\mathcal{F}(\overline{X})}$, and $T$ is continuous and may be extended to all of $dX_{\eta}$. Since this may be done in $M_n(\mathcal{F}(\overline{Y}))$, we get that $T$ is completely bounded.
\end{proof}

\begin{corol}\label{cor:5.6}
Let $\eta \in [0, 1]$, $\eta \neq \frac{1}{2}$. There exists no $\theta \in [0,1]$ such that $(\min(\ell_2), \max(\ell_2))_{\theta} = R(\eta)$.
\end{corol}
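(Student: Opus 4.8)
The plan is to argue by contradiction: assume $R(\eta)=o(\ell_2)(\theta)$ (completely isometrically) for some $\theta\in[0,1]$ with $\eta\neq\frac12$, and derive that $dOH$ and $do(\ell_2)$ are completely isomorphically equivalent, contradicting \hyperref[teo:5.4]{Theorem 5.4}. The guiding observation is that, under this assumption, the two scales $(R,C)$ and $(\min(\ell_2),\max(\ell_2))$ meet at two distinct points: they share $OH=R(\tfrac12)=o(\ell_2)(\tfrac12)$ and, by hypothesis, $R(\eta)=o(\ell_2)(\theta)$. Reiteration through these common points will realize both $dOH$ and $do(\ell_2)$ as projectively equivalent to a single extension.

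First I would symmetrize around $OH$ using duality. Since $R^*=C$ and $C^*=R$ completely isometrically, the duality for complex interpolation of operator spaces \hyperref[sec:3.2]{(Section 3.2)} gives $R(\eta)^*=(R,C)_\eta^*=(C,R)_\eta=R(1-\eta)$, and likewise $o(\ell_2)(\theta)^*=o(\ell_2)(1-\theta)$ (using $\min(\ell_2)^*=\max(\ell_2)$). Dualizing the assumed identity therefore yields $R(1-\eta)=o(\ell_2)(1-\theta)$. Consequently the two compatible couples
\[
(R(\eta),R(1-\eta)) \quad \text{and} \quad (o(\ell_2)(\theta),o(\ell_2)(1-\theta))
\]
coincide as couples of operator spaces, so the complete extension each induces at a fixed parameter is literally the same extension sequence. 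I would also record that $\theta\neq\frac12$: otherwise $o(\ell_2)(\tfrac12)=OH$ is self-dual, forcing $R(\eta)$ self-dual, which happens only for $\eta=\frac12$, contrary to hypothesis.

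Next I would apply the reiteration lemma \hyperref[lem:5.5]{(Lemma 5.5)} at parameter $\frac12$ to each couple. For $(R,C)$ with $\theta_0=\eta$, $\theta_1=1-\eta$ one has $(R(\eta),R(1-\eta))_{1/2}=(R,C)_{1/2}=OH$, and Lemma 5.5 gives that the extension induced by $(R(\eta),R(1-\eta))$ at $\frac12$ is completely projectively equivalent to $dOH$, the left-hand vertical map being $(1-2\eta)\,Id$. Symmetrically, the extension induced by $(o(\ell_2)(\theta),o(\ell_2)(1-\theta))$ at $\frac12$ is completely projectively equivalent to $do(\ell_2)$, with left-hand map $(1-2\theta)\,Id$. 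Since the two reiterated couples are equal, both of these induced extensions equal a common extension $D$, giving complete projective equivalences $dOH\sim D$ and $do(\ell_2)\sim D$. As projective equivalence is symmetric (invert the nonzero scalar multiples and the middle complete isomorphism) and transitive, composing the first diagram with the inverse of the second yields a complete projective equivalence between $dOH$ and $do(\ell_2)$; the composite left-hand map is $\frac{1-2\eta}{1-2\theta}\,Id$, a nonzero multiple of the identity, and the composite middle map a complete isomorphism. In particular $dOH$ and $do(\ell_2)$ are completely isomorphically equivalent, contradicting \hyperref[teo:5.4]{Theorem 5.4}.

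The crux---and the only place where $\eta\neq\frac12$ enters---is exactly this projective equivalence: Lemma 5.5 produces an invertible equivalence only when $\theta_1\neq\theta_0$, i.e.\ when $1-2\eta$ (and likewise $1-2\theta$) is nonzero. For $\eta=\frac12$ the symmetric couple degenerates and the multiplier vanishes, and no contradiction should appear, since $R(\tfrac12)=OH=o(\ell_2)(\tfrac12)$ genuinely holds. I expect the remaining points to be routine: the density hypotheses of Lemma 5.5 are immediate here because every space in sight is $\ell_2$ with a Hilbertian operator space structure, in which $c_{00}$ is dense, and the extension attached to a couple depends only on the couple and the parameter, so the identification of the two reiterated extensions with $D$ is automatic.
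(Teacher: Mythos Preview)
Your proof is correct and follows essentially the same route as the paper: assume the identity, dualize to get a second coincidence, and use reiteration through the resulting common couple to force $dOH$ and $do(\ell_2)$ to be (projectively, hence isomorphically) equivalent, contradicting \hyperref[teo:5.4]{Theorem~5.4}. You are in fact more careful than the paper's terse argument: you explicitly rule out $\theta=\tfrac12$ (needed so the reiteration multiplier $1-2\theta$ is nonzero) and you correctly note that Lemma~5.5 yields only projective equivalence, which suffices for the contradiction since Theorem~5.4 rules out complete isomorphic equivalence.
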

\begin{proof}
Suppose that $(\min(\ell_2), \max(\ell_2))_{\theta} = R(\eta)$. By duality it would follow that $(\min(\ell_2), \max(\ell_2))_{1 - \theta} = R(1 - \eta)$. By the Reiteration Lemma, $dOH$ and $do(\ell_2)$ would be completely equivalent, contradicting \hyperref[teo:5.4]{Theorem 5.4}.
\end{proof}

A remark is in order. By the duality theorem, we have that $(d(R, C)_{\frac{1}{2}})^* = d(C, R)_{\frac{1}{2}}$ completely isomorphically. Also $d(R, C)_{\frac{1}{2}}$ is completely isometric to $d(C, R)_{\frac{1}{2}}$ by $(x, y) \mapsto (x, -y)$ (see the proof of \hyperref[lem:5.5]{Lemma 5.5}). However, it is clear that the canonical identification between $\ell_2$ and its dual is not completely bounded from $d(R, C)_{\frac{1}{2}}$ into $d(C, R)_{\frac{1}{2}}$, by the complete singularity of the induced complete twisted sum and the unicity of $OH$. This may also be proved directly by a reasoning similar to that of \hyperref[prop:4.8]{Proposition 4.8}. Of course, this remark also applies to $(\min(\ell_2), \max(\ell_2))$.

\section{$dOH$ and $do(\ell_2)$ as operator algebras}\label{sec:6}
We recall that an operator algebra $A$ is an operator space that is also a Banach algebra, and such that the inclusion $A \subset B(H)$ giving its operator space structure may be taken respecting the algebraic operations \cite{Blecher01}.

If $A$ is a Banach algebra with an operator space structure, it is an operator algebra if and only if the multiplication $m : A \times A \rightarrow A$ is a completely bounded bilinear operator in the Haagerup sense, that is, the maps $m_n : M_n(A) \times M_n(A) \rightarrow M_n(A)$ given by
\[
m_n((x_{ij}), (y_{ij})) = (\sum\limits_{k} m(x_{ik}, y_{kj}))_{ij}
\]
are uniformly bounded.

If $(X_0, X_1)$ is a compatible couple of operator spaces which are also operator algebras, and such that the multiplication extends to the sum space, then the space $\mathcal{F}(\overline{X})$ is also an operator algebra, and since the quotient of an operator algebra by a closed two sided ideal is also an operator algebra, the interpolation space $X_{\theta}$ is also an operator algebra, $0 < \theta < 1$ \cite{Blecher01}.

\begin{props}\label{prop:6.1}
If $(X_0, X_1)$ is a compatible couple of operator spaces, which are also operator algebras and the multiplication extends to the sum space, then $dX_{\theta}$ is also an operator algebra, for all $\theta \in (0, 1)$.
\end{props}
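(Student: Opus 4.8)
The plan is to realize $dX_{\theta}$ as a quotient of the operator algebra $\mathcal{F} = \mathcal{F}(\overline{X})$ by a closed two-sided ideal, and then invoke the fact recalled just before the statement, that such a quotient is again an operator algebra. By hypothesis $X_0$ and $X_1$ are operator algebras whose multiplication extends to $\Sigma(\overline{X})$, so the pointwise product $(fg)(z) = f(z)g(z)$ sends $\mathcal{F}\times\mathcal{F}$ into $\mathcal{F}$ and makes $\mathcal{F}$ into an operator algebra; this is precisely the input I will use.

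Recall from \hyperref[prop:3.2]{Proposition 3.2} that $dX_{\theta} = \mathcal{F}/(\ker(\delta_{\theta}) \cap \ker(\delta_{\theta}'))$, where $\delta_{\theta}$ is evaluation at $\theta$ and $\delta_{\theta}'$ is evaluation of the derivative at $\theta$. First I would check that $J = \ker(\delta_{\theta}) \cap \ker(\delta_{\theta}')$ is a two-sided ideal of $\mathcal{F}$. If $f \in J$ and $g \in \mathcal{F}$, then $(fg)(\theta) = f(\theta)g(\theta) = 0$, and by the Leibniz rule
\[
(fg)'(\theta) = f'(\theta)g(\theta) + f(\theta)g'(\theta) = 0,
\]
since $f(\theta) = f'(\theta) = 0$; the same computation applied to $gf$ shows that $J$ is two-sided. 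Closedness of $J$ is immediate: $\delta_{\theta}$ is continuous, so $\ker(\delta_{\theta})$ is closed, and by \hyperref[lem:3.1]{Lemma 3.1} the restriction of $\delta_{\theta}'$ to $\ker(\delta_{\theta})$ is (completely) bounded, so $J$ is closed in $\ker(\delta_{\theta})$ and hence in $\mathcal{F}$.

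With this in hand, the cited principle yields at once that $dX_{\theta} = \mathcal{F}/J$ is an operator algebra for every $\theta \in (0,1)$. Unwinding the identification $dX_{\theta} = \{(f(\theta), f'(\theta)) : f \in \mathcal{F}\}$, the induced product is
\[
(x_1, y_1)(x_2, y_2) = (x_1 x_2, \; y_1 x_2 + x_1 y_2),
\]
which is well defined exactly because $J$ is an ideal. I do not expect a deep obstacle: the two places where the hypotheses enter essentially are that $\mathcal{F}$ itself be an operator algebra (this is where \emph{the multiplication extends to the sum space} is needed, both to guarantee $fg \in \mathcal{F}$ and to transfer the Haagerup boundedness of multiplication from $X_0, X_1$ to $\mathcal{F}$), and that $J$ be an ideal, which is just the product rule. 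The main step to get right is therefore the ideal verification above; everything else is the general fact that quotients of operator algebras by closed two-sided ideals are operator algebras.
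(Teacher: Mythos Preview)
Your proposal is correct and follows exactly the paper's approach: identify $dX_{\theta}$ with $\mathcal{F}/(\ker(\delta_{\theta}) \cap \ker(\delta_{\theta}'))$, observe that $\ker(\delta_{\theta}) \cap \ker(\delta_{\theta}')$ is a closed two-sided ideal (the paper states this parenthetically, while you supply the Leibniz-rule verification), and quote the quotient-of-operator-algebras fact.
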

\begin{proof}
This follows from the previous facts and the identity $dX_{\theta} = \mathcal{F}/(\ker(\delta_{\theta}) \cap \ker(\delta_{\theta}'))$. (Notice that $\ker(\delta_{\theta}) \cap \ker(\delta_{\theta}')$ is a closed two sided ideal of $\mathcal{F}$).
\end{proof}

In particular, since $R$ and $C$ are operator algebras with respect to the natural multiplication of $\ell_2$ \cite{Blecher01}, the twisted sum $dOH$ is also an operator algebra with the multiplication inherited as a quotient of $\mathcal{F}(R, C)$.

However, $\min (\ell_2)$ is not an operator algebra and $OH = (\min(\ell_2), \max(\ell_2))_{\frac{1}{2}}$ is an operator algebra \cite{Blecher01}. We have:
\begin{theorem}\label{teo:6.2}
The complete twisted sum $do(\ell_2)$ is not an operator algebra.
\end{theorem}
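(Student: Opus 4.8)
The plan is to argue by contradiction through the Haagerup criterion recalled just above: if $do(\ell_2)$ were an operator algebra, its multiplication $m$ would satisfy $\sup_n\|m_n\|<\infty$. The multiplication to handle first is the canonical one, namely the Leibniz rule $(a_1,b_1)\cdot(a_2,b_2)=(a_1a_2,\,a_1b_2+b_1a_2)$ induced by the pointwise product on $\ell_2$, inherited from $\mathcal{F}(\min(\ell_2),\max(\ell_2))$ through $do(\ell_2)=\mathcal{F}/(\ker(\delta_{1/2})\cap\ker(\delta_{1/2}'))$ exactly as in \hyperref[prop:6.1]{Proposition 6.1} (this is the same recipe that makes $dOH$ an operator algebra, but now one endpoint, $\min(\ell_2)$, is not an operator algebra). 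For this product the quotient $q(x,y)=x$ is a homomorphism onto $OH$, the included copy $\{(0,y)\}$ is a square--zero ideal completely isometric up to the constant $\beta$ to $OH$, and the section $\{(x,0)\}$ is a closed subalgebra.

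Next I would localize the failure. A closed subalgebra of an operator algebra is again an operator algebra, so it suffices to isolate one piece that cannot carry such a structure. Two of the three pieces are harmless: the ideal $\{(0,y)\}$ has zero product, and the mixed (module) products $E\times OH\to OH$ are automatically completely bounded, since by \hyperref[lem:4.7]{Lemma 4.7} the ideal norm is $\beta\|\cdot\|_{OH}$ while the section norm dominates $\|\cdot\|_{OH}$, and pointwise multiplication is already completely bounded on $OH$ (which \emph{is} an operator algebra). Hence the entire content sits in the subalgebra $E:=\{(x,0)\}$: it is $\ell_2$ with its pointwise product as a Banach algebra, but by \hyperref[prop:4.8]{Proposition 4.8} its operator space structure is \emph{not} completely isomorphic to $OH$. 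So the theorem reduces to showing that $E$, with pointwise multiplication, is not an operator algebra.

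For the mechanism I would exploit the asymmetry of the endpoints. Via \hyperref[lem:4.7]{Lemma 4.7}, the matrix norms of $E$ are governed by the extremal interpolating functions for $(\min(\ell_2),\max(\ell_2))$, and the endpoint data $\|y^n\|_{M_n(\min(\ell_2))}=1$, $\|y^n\|_{M_n(\max(\ell_2))}=n$ (\hyperref[lem:5.3]{Lemma 5.3}) versus $\|y^n\|_{M_n(OH)}=n^{\frac12}$ show that $E$ retains the $\min$/$\max$ imbalance that pointwise multiplication (an amplified Schur--multiplier operation) cannot absorb. This is the precise contrast with $dOH$, whose analogous subalgebra is controlled by the \emph{symmetric} couple $(R,C)$ with $\|y^n\|_{M_n(R)}=\|y^n\|_{M_n(C)}=n^{\frac12}$, and it is the same asymmetry already used in \hyperref[teo:5.4]{Theorem 5.4} to separate $do(\ell_2)$ from $dOH$. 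The goal is to produce matrices for which $\|m_N\|$ grows without bound, contradicting $\sup_n\|m_n\|<\infty$.

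The hard part is twofold. First, the spread--out vectors $x^n,y^n$ are not directly usable as test elements here: their entries are distinct basis vectors, so their pointwise products largely vanish, and one must instead build coordinate--reusing, Schur--type test matrices that still detect the endpoint asymmetry through \hyperref[lem:4.7]{Lemma 4.7}; the cleanest route may be to set up a bounded Schur--multiplier argument that transfers the known failure of the operator algebra property for $\min(\ell_2)$ onto $E$, yielding the uniform lower bound $\|m_N\|\to\infty$. Second, to obtain the statement for \emph{every} admissible multiplication rather than only the canonical Leibniz one, I would either argue that any operator algebra product on $do(\ell_2)$ restricts to one on a copy of $\min(\ell_2)$, or derive the uniform lower bound on $\|m_N\|$ in a form independent of the product. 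Making this reduction rigorous, and with it the complete unboundedness of the amplified multiplication, is the main obstacle.
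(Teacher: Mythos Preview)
Your proposal correctly identifies the strategy (contradiction via the Haagerup criterion) and the mechanism (the asymmetry of the couple $(\min(\ell_2),\max(\ell_2))$ together with \hyperref[lem:4.7]{Lemma 4.7}), but it is incomplete: you yourself flag the construction of test elements in the subalgebra $E=\{(x,0)\}$ as ``the main obstacle'' and do not carry it out. In fact, localizing to $E$ makes the computation harder, not easier. With the obvious candidates $x^n$ and $(x^n)^T$, the norms in $E$ already contain the logarithmic correction (since the extremal for $x^n$ has derivative $-\tfrac12\log n\cdot x^n$, \hyperref[lem:4.7]{Lemma 4.7} gives $\|(x^n,0)\|_{M_n}\asymp (1+\tfrac{\beta}{2}\log n)\,n^{1/4}$), while the Haagerup product $(x^n,0)\cdot((x^n)^T,0)=(x^n(x^n)^T,0)$ has norm of order $n^{1/2}$, because $x^n(x^n)^T$ is rank one and its extremal is constant. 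The resulting ratio tends to $0$, not $\infty$, so these test elements do not witness failure of the operator algebra property for $E$; you would need a genuinely different family, and none is produced.

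The paper avoids this detour entirely. It tests the multiplication on $do(\ell_2)$ itself, not on $E$, using the \emph{extremal} elements
\[
a_n=\bigl(x^n,\ -\tfrac12\log n\cdot x^n\bigr),\qquad b_n=\bigl((x^n)^T,\ -\tfrac12\log n\cdot (x^n)^T\bigr),
\]
which by the proof of \hyperref[lem:4.5]{Lemma 4.5} satisfy $\|a_n\|_{M_n}=\|b_n\|_{M_n}=n^{1/4}$. The Leibniz product is $a_n\cdot b_n=\bigl(x^n(x^n)^T,\ -\log n\cdot x^n(x^n)^T\bigr)$. Now the key point: $x^n(x^n)^T$ has a single nonzero entry $e_1+\cdots+e_n$, so its $\min$ and $\max$ norms coincide, its extremal is the constant function, and \hyperref[lem:4.7]{Lemma 4.7} forces
\[
\|a_n\cdot b_n\|_{M_n}\ \ge\ \tfrac14\bigl(1+\beta\log n\bigr)\,n^{1/2}.
\]
Hence $\|m_n\|\ge\tfrac14(1+\beta\log n)\to\infty$. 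The whole argument fits in a few lines precisely because one works with the extremal pairs rather than with $(x,0)$. Finally, your concern about ``every admissible multiplication'' is unnecessary here: the statement, like \hyperref[prop:6.1]{Proposition 6.1}, refers to the canonical Leibniz product inherited from $\mathcal{F}$, and that is the only one the paper addresses.
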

\begin{proof}
The multiplication on $do(\ell_2)$ is given by
\[
(x_1, y_1)(x_2, y_2) = (x_1 x_2, x_1 y_2 + y_1 x_2),
\]
for $(x_1, y_1), (x_2, y_2) \in do(\ell_2)$. We must prove that it is not completely bounded.

Again, let $x_n \in M_n(W)$ be the matrix with first line $(e_1 \; ... \; e_n)$ and $0$ elsewhere. We denote the transpose of $x_n$ by $x_n^T$.

We have:
\begin{eqnarray*}
\|x_n\|_{M_n(\min(\ell_2))} = \|x_n^T\|_{M_n(\min(\ell_2))} = 1 \\
\|x_n\|_{M_n(OH)} = \|x_n^T\|_{M_n(OH)} = n^{\frac{1}{4}} \\
\|x_n\|_{M_n(\max(\ell_2))} = \|x_n^T\|_{M_n(\max(\ell_2))} = n^{\frac{1}{2}}
\end{eqnarray*}

Also, $x_n x_n^T$ is the matrix with $e_1 + ... + e_n$ in the first coordinate, and $0$ elsewhere, and therefore has norm $n^{\frac{1}{2}}$.

From the proof of \hyperref[lem:4.5]{Lemma 4.5}, we see that $\|(x_n, \log (n^{-\frac{1}{2}}) x_n)\|_{M_n(do(\ell_2))} = \|(x_n^T, \log (n^{-\frac{1}{2}}) x_n^T)\|_{M_n(do(\ell_2))} = n^{\frac{1}{4}}$.

So, if the multiplication is completely bounded, there is a constant $C > 0$ such that, for every $n \geq 1$, we have
\begin{equation}\label{eq:6.1}
\|(x_n, \log (n^{-\frac{1}{2}}) x_n)(x_n^T, \log (n^{-\frac{1}{2}}) x_n^T))\|_{M_n(do(\ell_2))} \leq C \sqrt{n}
\end{equation}

But, by \hyperref[lem:4.7]{Lemma 4.7}, taking the constant function with value $x_n x_n^T$,
\begin{eqnarray*}
&& \|(x_n, \log (n^{-\frac{1}{2}}) x_n)(x_n^T, \log (n^{-\frac{1}{2}}) x_n^T))\|_{M_n(do(\ell_2))} = \\
&&\|(x_n x_n^T, \log (n^{-1}) x_n x_n^T)\|_{M_n(do(\ell_2))}  \geq\\
&& \frac{1}{4}(\|x_n x_n^T\|_{M_n(OH)} + \beta \|\log (n^{-1}) x_n x_n^T\|_{M_n(OH)}) = \\
&& \frac{1}{4}\sqrt{n}(1 + \beta\log n)
\end{eqnarray*}

Therefore, by equation \eqref{eq:6.1}, the multiplication cannot be completely bounded.
\end{proof}

\section{Some questions}\label{sec:7}
As noted in \hyperref[sec:2]{Section 2}, twisted sums of Banach spaces are defined by $0-$linear maps. If we have a complete twisted sum, we also have a $0-$linear map that defines the twisted sum
\[
  \xymatrix{ 0 \ar[r] & \mathcal{K}\otimes_{\min}Y \ar[r]^{Id_{\mathcal{K}_0}\otimes i} & \mathcal{K}\otimes_{\min}X
    \ar[r]^{Id_{\mathcal{K}_0}\otimes q} & \mathcal{K}\otimes_{\min}Z\ar[r]  &0}
  \]

This $0-$linear map may be obtained by pasting together $0-$linear maps $F_n : M_n(Z) \rightarrow M_n(Y)$.
\begin{question}\label{qu:7.1}
Which sequences of 0-linear maps $(F_n : M_n(Z) \rightarrow M_n(Y))_n$, when pasted together, define a 0-linear map $F : \mathcal{K}\otimes_{\min}Z \rightarrow \mathcal{K}\otimes_{\min}Y$ such that the induced twisted sum is (at least in some sense, completely isomorphic to) an operator space?
\end{question}

\hyperref[prop:2.8]{Proposition 2.8} also raises the following question:
\begin{question}\label{qu:7.2}
Given a Banach space $Y$, for which operator space structures on $Y$ triviality and complete triviality of extension sequences
    \[
    \xymatrix{ 0 \ar[r] & Y \ar[r] & X
    \ar[r] & Z\ar[r]  &0}
    \]
\noindent are equivalent?
\end{question}

Of course, we may ask the same question for $X$ and $Z$.

Recall from \hyperref[sec:4]{Section 4} that for a Banach space $X$, we have the operator space $o(X) = (\min(X), \max(X))_{\frac{1}{2}}$.

\begin{question}\label{qu:7.3}
For which Banach spaces the extension sequence induced by complex interpolation of operator spaces
    \[
    \xymatrix{ 0 \ar[r] & o(X) \ar[r] & do(X)
    \ar[r] & o(X)\ar[r]  &0}
    \]
\noindent is not completely trivial (is completely singular/cosingular)?
\end{question}

\section*{Acknowledgements}
The present work is part of my PhD thesis under supervision of Valentin Ferenczi, whom I would like to thank for his invaluable help. I also would like to thank Gilles Pisier for his helpful remarks with respect to this work.

\bibliographystyle{abbrv}
\bibliography{refs}

\begin{thebibliography}{10}

\bibitem{Bergh02}
J.~Bergh.
\newblock On the {R}elation between the {T}wo {C}omplex {M}ethods of
  {I}nterpolation.
\newblock {\em Indiana Univ. Math. J.}, 28:775--778, 1979.

\bibitem{Bergh01}
J.~Bergh and J.~L\"ofstr\"om.
\newblock {\em Interpolation {S}paces: {A}n {I}ntroduction}.
\newblock Grundlehren Math. Wiss. Springer-Verlag Berlin Heidelberg, 1976.

\bibitem{Blecher01}
D.~P. Blecher and C.~le~Merdy.
\newblock On quotients of function algebras and operator algebra structures on
  $\ell_p$.
\newblock {\em J. Operator Theory}, 34(2):315--346, 1995.

\bibitem{Blecher1991}
D.~P. Blecher and V.~I. Paulsen.
\newblock Tensor products of operator spaces.
\newblock {\em J. Funct. Anal.}, 99(2):262 -- 292, 1991.

\bibitem{Sanchez02}
F.~Cabello~S{\'a}nchez and J.~M.~F. Castillo.
\newblock Duality and {T}wisted {S}ums of {B}anach {S}paces.
\newblock {\em J. Funct. Anal.}, 175(1):1 -- 16, 2000.

\bibitem{CabelloSanchez2016}
F.~Cabello~S{\'a}nchez, J.~M.~F. Castillo, S.~Goldstein, and J.~Su\'arez de~la
  Fuente.
\newblock Twisting non-commutative ${L}^p$ spaces.
\newblock {\em Adv. Math.}, 294:454 -- 488, 2016.

\bibitem{Castillo01}
J.~M.~F. Castillo, V.~Ferenczi, and M.~Gonz\'alez.
\newblock Singular twisted sums generated by complex interpolation.
\newblock {\em Trans. Amer. Math. Soc.}, 2016, to appear.

\bibitem{Castillo02}
J.~M.~F. Castillo and M.~Gonz{\'a}lez.
\newblock {\em Three-space {P}roblems in {B}anach {S}pace {T}heory}.
\newblock Number 1667 in Lecture Notes in Econom. and Math. Systems.
  Springer-Verlag Berlin Heidelberg, 1997.

\bibitem{Castillo03}
J.~M.~F. Castillo and Y.~Moreno.
\newblock Strictly singular quasi-linear maps.
\newblock {\em Nonlinear Anal.}, 49(7):897 -- 903, 2002.

\bibitem{Correa01}
W.~H.~G. {Corr{\^e}a}.
\newblock {Type, cotype and twisted sums induced by complex interpolation}.
\newblock {\em ArXiv e-prints}, Jan. 2017.

\bibitem{Cwikel01}
M.~Cwikel, B.~Jawerth, M.~Milman, and R.~Rochberg.
\newblock Differential estimates and commutators in interpolation theory.
\newblock In E.~R. Berkson, N.~T. Peck, and J.~J. Uhl, editors, {\em Analysis
  at Urbana}, volume~2, pages 170--220. Cambridge Univ. Press, 1989.
\newblock Cambridge Books Online.

\bibitem{Duren2000}
P.~L. Duren.
\newblock {\em Theory of ${H}^p$ {S}paces}.
\newblock Dover books on mathematics. Dover Publications, 2000.

\bibitem{Effros01}
E.~G. Effros and Z.~J. Ruan.
\newblock {\em Operator {S}paces}, volume~23 of {\em London Mathematical
  Society Monographs}.
\newblock Oxford University Press, 2000.

\bibitem{Enflo01}
P.~Enflo, J.~Lindenstrauss, and G.~Pisier.
\newblock On the "three space problem".
\newblock {\em Math. Scand.}, 36(2):199--210, 1975.

\bibitem{Kalton01}
N.~J. Kalton and N.~T. Peck.
\newblock Twisted sums of sequence spaces and the three space problem.
\newblock {\em Trans. Amer. Math. Soc.}, 255:1--30, 1979.

\bibitem{Krantz1999handbook}
S.~G. Krantz.
\newblock {\em Handbook of {C}omplex {V}ariables}.
\newblock Birkh\"auser Basel, 1999.

\bibitem{Paulsen2002}
V.~Paulsen.
\newblock {\em {C}ompletely {B}ounded {M}aps and {O}perator {A}lgebras}.
\newblock Cambridge Stud. Adv. Math. Cambridge Univ. Press, 2002.

\bibitem{Paulsen1992}
V.~I. Paulsen.
\newblock Representations of function algebras, abstract operator spaces, and
  {B}anach space geometry.
\newblock {\em J. Funct. Anal.}, 109(1):113 -- 129, 1992.

\bibitem{Paulsen1998}
V.~I. Paulsen.
\newblock Relative {Y}oneda {C}ohomology for {O}perator {S}paces.
\newblock {\em J. Funct. Anal.}, 157(2):358 -- 393, 1998.

\bibitem{Pisier03}
G.~Pisier.
\newblock {\em The {O}perator {H}ilbert {S}pace ${O}{H}$, {C}omplex
  {I}nterpolation and {T}ensor {N}orms}.
\newblock Number v. 122 in Mem. Amer. Math. Soc. Amer. Math. Soc., 1996.

\bibitem{Pisier05}
G.~Pisier.
\newblock {\em Non-commutative vector valued $L_p$-spaces and completely
  $p$-summing maps}, volume 247.
\newblock Ast{\'e}risque. Soc. Math. France, 1998.

\bibitem{Pisier04}
G.~Pisier.
\newblock {\em Introduction to operator space theory}, volume 294.
\newblock Cambridge Univ. Press, 2003.

\bibitem{Pisier2016}
G.~Pisier.
\newblock {\em Martingales in Banach Spaces}.
\newblock Cambridge Stud. Adv. Math. Cambridge Univ. Press, 2016.

\bibitem{Rochberg01}
R.~Rochberg and G.~Weiss.
\newblock Derivatives of {A}nalytic {F}amilies of {B}anach {S}paces.
\newblock {\em Ann. of Math.}, 118(2):315--347, 1983.

\bibitem{Ruan01}
Z.~J. Ruan.
\newblock {Subspaces of {C}{*}-algebras}.
\newblock {\em J. Funct. Anal.}, 76(1):217--230, 1988.

\bibitem{Shannon01}
G.~P. Shannon.
\newblock Strictly {S}ingular and {S}trictly {C}osingular {O}perators on
  {T}opological {V}ector {S}paces.
\newblock {\em Proc. R. Ir. Acad. Section A: Mathematical and Physical
  Sciences}, 73:303--308, 1973.

\bibitem{Stafney01}
J.~D. Stafney.
\newblock The {S}pectrum of an {O}perator on an {I}nterpolation {S}pace.
\newblock {\em Trans. Amer. Math. Soc.}, 144:333--349, 1969.

\bibitem{Vinod01}
P.~Vinod~Kumar and M.~S. Balasubramani.
\newblock Submaximal {O}perator {S}pace {S}tructures on {B}anach {S}paces.
\newblock {\em Oper. Matrices}, 3(7):723--732, 2013.

\bibitem{Wood1999}
P.~J. Wood.
\newblock {\em Homological Algebra in Operator Spaces with Applications to
  Harmonic Analysis}.
\newblock PhD thesis, University of Waterloo, 1999.

\end{thebibliography}

\end{document}